\newtheorem{theorem}{Theorem}
\newtheorem{proposition}[theorem]{Proposition}
\newtheorem{lemma}[theorem]{Lemma}
\newtheorem{corollary}[theorem]{Corollary}
\theoremstyle{definition}
\newtheorem{definition}[theorem]{Definition}
\newtheorem{remark}[theorem]{Remark}
\newtheorem{question}[theorem]{Question}
\def\x@arrow{\DOTSB\Relbar}
\def\xlongequalsignfill@{\arrowfill@\x@arrow\Relbar\x@arrow}
\newcommand{\xlongequal}[2][]{%
        \ext@arrow 0099\xlongequalsignfill@{#1}{#2}}
\def\xlongrightarrowfill@{\arrowfill@\relbar\relbar\longrightarrow}
\newcommand{\xlongrightarrow}[2][]{%
        \ext@arrow 0099\xlongrightarrowfill@{#1}{#2}}
\newcommand{\6}{\partial}
\newcommand{\Ima}{{\text {Im}}\,}
\renewcommand{\ker}{\text {Ker}\,}
\renewcommand{\bar}{\overline}
\newcommand{\del}{\partial}
\newcommand{\delbar}{\partial_J}
\renewcommand{\phi}{\varphi}
\renewcommand{\[}{\begin{equation}}
\renewcommand{\]}{\end{equation}}
\begin{document}
\title{Quaternionic Bott--Chern cohomology and existence of HKT metrics.}
\author{Mehdi Lejmi}
\address{Department of Mathematics, Bronx Community College of CUNY, Bronx, NY 10453, USA.}
\email{mehdi.lejmi@bcc.cuny.edu}
\author{Patrick Weber}
\address{D\'epartement de Math\'ematiques, Universit\'e libre de Bruxelles CP218, Boulevard du Triomphe, Bruxelles 1050, Belgique.}
\email{pweber@ulb.ac.be}

\begin{abstract}
%Inspired by the work of Angella--Dloussky--Tomassini~\cite{ang-dlo-tom} on compact complex surfaces, we give a new criterion for the existence of hyperk\"ahler with torsion metrics on compact hypercomplex manifolds of real dimension 8 in terms of quaternionic Bott--Chern cohomology.
We study quaternionic Bott--Chern cohomology on compact hypercomplex manifolds and adapt some results from complex geometry to the quaternionic setting. For instance, we prove a criterion for the existence of HKT metrics on compact hypercomplex manifolds of real dimension~8 analogous to the one given by Teleman~\cite{tel} and Angella--Dloussky--Tomassini~\cite{ang-dlo-tom} for compact complex surfaces.
\end{abstract}

\maketitle

\vspace{-2em}
%%%%%%%%%%%%%%%%%%%%%%%%%%%%%%%%%%%%%%%%%%%%%%%%%%%%%%%%%%%%

\section{Introduction}

A hypercomplex manifold $(M,I,J,K)$ is a manifold $M$ of real dimension $4n$ equipped with three complex structures $I,J,K$
satisfying the quaternionic relations. Such a manifold admits a unique torsion-free connection preserving $I,J,K$ called the Obata connection~\cite{oba}. If the holonomy of the Obata connection lies in the commutator subgroup $SL(n,\mathbb{H})$ of the general quaternionic linear group $GL(n,\mathbb{H})$, then the hypercomplex manifold $(M,I,J,K)$ is called an $SL(n,\mathbb{H})$-manifold.

From the metric point of view, we can always endow a hypercomplex manifold with a quaternionic Hermitian metric, i.e.~a Riemannian metric $g$ invariant with respect to all three complex structures $I,J,K.$ If we impose additional restrictions on the metric, we stumble upon some well-known metrics. If, for instance, we let $\Omega$ be the $(2,0)$-form with respect to $I$ defined by $\Omega(\cdot,\cdot):=g(J\cdot,\cdot)+\sqrt{-1}\,g(K\cdot,\cdot)$, then the metric $g$ is called hyperk\"{a}hler~\cite{bes} if $d\Omega=0$ and hyperk\"{a}hler with torsion~\cite{how-pap} (HKT for short) if $\partial \Omega =0,$ where $\partial$ is the Dolbeault operator with respect to $I$. 

G.~Grantcharov, M.~Verbitsky and the first named author recently proved that a compact $SL(2, {\Bbb H})$-manifold is HKT if and only if the Hodge number~$h^{0,1}$ (with respect to $I$) is even~\cite{gra-lej-ver}. This is the quaternionic counterpart of the well-known result that a compact complex surface is K\"ahler if and only if the first Betti number is even~\cite{buchdahl,lamari,miy, siu}. The aim of the present paper is to further investigate this link between compact $SL(2,\mathbb{H})$-manifolds with HKT metrics on one hand and compact complex surfaces with K\"{a}hler metrics on the other hand.  We adapt the following complex geometry results  to the quaternionic setup:

\hfill

\begin{minipage}{0.45\textwidth}
A compact complex surface
\begin{enumerate}
\item{has its Fr\"{o}licher spectral sequence degenerate at the first page~\cite{kodaira}.}
\item{is K\"{a}hler if and only if the second non-K\"{a}hler-ness degree vanishes~\cite{tel}.}%~\cite{ang-dlo-tom,lub-tel,tel}.}
\item{is K\"{a}hler if and only if it is strongly Gauduchon~\cite{pop}.\\}
\end{enumerate}
\end{minipage}%
\hfill
\begin{minipage}{0.45\textwidth}
A compact $SL(2,\mathbb{H})$-manifold
\begin{enumerate}
\item{has its quaternionic Fr\"{o}licher spectral sequence degenerate at the first page. (Section 8)}
\item{is HKT if and only if the second non-HKT-ness degree vanishes. (Section~9)}
\item{is HKT if and only if it is quaternionic strongly Gauduchon. (Section 10)}
\end{enumerate}
\end{minipage}

\hfill

The fundamental difference from the complex setup is that we are not in the presence of a double complex. Instead we are facing a single complex with two anti-commuting differentials acting on the same component of the bidegree. The upshot is that most proofs simplify, but on the downside we sometimes need to impose additional assumptions, such as the $SL(n,\mathbb{H})$ holonomy constraint, to recover the necessary symmetries.

\hfill

The organisation of the paper is as follows. In Section~\ref{section2} and~\ref{section3} we recall the definition of quaternionic cohomology groups and give quaternionic versions of the Fr\"{o}licher spectral sequence and of the $dd^c$-Lemma. In Section~\ref{section4} we state a Fr\"{o}licher-type inequality for quaternionic Bott--Chern cohomology on compact hypercomplex manifolds as proven by
Angella--Tomassini in a very general setup in~\cite{ang-tom-1} and for compact complex manifolds in~\cite{_Angella-Tomassini_}. In Section~\ref{section5} we take a look at quaternionic metric structures and in Section~\ref{section6} we build a Hodge theory in the particular case when the holonomy of the Obata connection is contained in $SL(n,\mathbb{H})$. In Section~\ref{section7} we show how to obtain a type decomposition in real and imaginary $(2,0)$-forms by adapting tools from almost-complex geometry~\cite{dra-li-zha}. In Section~\ref{froelicherSL} we show that the quaternionic spectral sequence degenerates at the first page on compact $SL(2,\mathbb{H})$-manifolds. In Section~\ref{section9} and~\ref{section10} we prove that a compact $SL(2,\mathbb{H})$-manifold admits a HKT metric if and only if it admits a quaternionic strongly Gauduchon metric, and that this happens precisely when the second non-HKT-ness degree vanishes. Finally, in Section~\ref{section11} we compute the quaternionic cohomology groups of some nilmanifolds in real dimension 8 and 12.

%\hfill
%
%{\textsl{Funding:}}
%This work was supported by the F.R.S.-FNRS~[16594832 to P.W.].
%
%\hfill
%
%{\textsl{Acknowledgements:}}
%The first named author would like to thank Dan Popovici for giving his insights on Hermitian strongly Gauduchon metrics and also thanks Andrei Teleman, Gueo Grantcharov and Carl~Tipler for useful discussions.
%The second named author warmly thanks Joel Fine and Misha Verbitsky for their advice and continuous support. %Moreover, he would like to express his gratitude to Daniele Angella, Adriano Tomassini and Thomas Bruun Madsen for their interest in the present work. 

%%%%%%%%%%%%%%%%%%%%%%%%%%%%%%%%%%%%%%%%%%%%%%%%%%%%%%%%%%%%

\section{Quaternionic cohomology groups}\label{section2}

%%%%%%%%%%%%%%%%%%%%%%%%%%%%%%%%%%%%%%%%%%%%%%%%%%%%%%%%%%%%

A {\it hypercomplex manifold} $(M,I,J,K)$ is a smooth manifold $M$ of real dimension~$4n$ equipped with
three integrable almost-complex structures $I, J, K$
satisfying the quaternionic relations
$I^2=J^2=K^2= IJK = -\text{Id}_{TM}$.
%Averaging out we can always endow a hypercomplex manifold with a {\it quaternionic Hermitian metric}, i.e.~an $SU(2)$-invariant Riemannian metric $g$:
%$$g(\cdot,\cdot)=g(I\cdot,I\cdot)=g(J\cdot,J\cdot)=g(K\cdot,K\cdot).$$
We denote by $\Omega^\ast(M)=\sum \limits_{i=0}^{4n}\Omega^i(M)$ the bundle of real exterior forms on $M$ and by
$$\Omega^i(M)\otimes\mathbb{C}=\underset{p+q=i}{\displaystyle\bigoplus}\Omega_{I}^{p,q}(M),$$
the decomposition of the complexified bundle with respect to~$I.$ The sections of~$\Omega_I^{p,q}(M)$ are denoted by $\Lambda_I^{p,q}(M).$
The complex structure $J$ acts on $1$-forms and this action is extended to the space of $p$-forms by
$$(J\varphi)(X_1,\cdots,X_p)=\varphi(J^{-1}X_1,\cdots,J^{-1}X_p)=(-1)^p\varphi(JX_1,\cdots,JX_p),$$
for any $p$-form $\varphi.$
Since $I$ and $J$ anti-commute, the action by $J$ matches $(p,q)$-forms with respect to $I$ to $(q,p)$-forms:
$$J:\Lambda^{p,q}_I(M)\longrightarrow\Lambda^{q,p}_I(M).$$
We define the twisted differential operator
$\6_J:  \Lambda^{p,q}_I(M)\rightarrow \Lambda^{p+1,q}_I(M)$
by
$$\6_J:=J^{-1}\circ \bar\6 \circ J,$$
where $\bar{\partial}$ is the standard Dolbeault operator with respect to $I$.
Note that both $\partial$ and $\partial_J$ increase $p$ in the bidegree $(p,q)$ and leave $q$ untouched. By the integrability of~$I$, we deduce $\partial^2=\6_J^2=0$.
Moreover, by a result of~\cite{_Verbitsky:HKT_}, the operators $\6$ and $\6_J$ anti-commute: 
$$\6\6_J+\6_J\6=0.$$ If we fix $q=0$, we get a (single) cochain complex $(\Lambda^{p,0}_I(M), \partial, \partial_J)$ with two anti-commuting differentials.
%
%%\begin{definition}
%%The Obata connection~\cite{oba} on a hypercomplex manifold $(M,I,J,K)$ is the unique torsion-free connection preserving $I,J,K.$
%%\end{definition}
%%
%%Since the Obata connection preserves the hypercomplex structure, its holonomy lies in the general quaternionic linear group $GL(n, {\Bbb H})$ (see for instance~\cite{_Soldatenkov:SU(2)_}).
%%However, in many examples such as nilmanifolds~\cite{_BDV:nilmanifolds_}, the holonomy is actually contained in $SL(n, {\Bbb H})$, the commutator subgroup of $GL(n,\Bbb H)$
%%(for more details see~\cite{_Verbitsky:canoni_,_Verbitsky:HKT_})
%%
%%\begin{definition}
%%A hypercomplex manifold $(M,I,J,K)$ is called $SL(n, {\Bbb H})$-manifold if the holonomy of the Obata
%%connection lies in $SL(n, {\Bbb H})$.
%%\end{definition}
%%
%%For any $SL(n, {\Bbb H})$-manifold, the
%%canonical bundle~$\Omega^{2n,0}_I(M)$ is holomorphically trivial~\cite{_Verbitsky:canoni_}.
%%%The converse is true if, moreover, $M$ admits an HKT metric~\cite{_Verbitsky:canoni_}.
%%We denote a $SL(n, {\Bbb H})$-manifold by $(M,I,J,K,\Phi)$ where
%%$\Phi$ is a nowhere degenerate
%%holomorphic section of $\Omega^{2n,0}_I(M)$. Furthermore, we can assume that $\Phi$ satisfies
%%$\Phi=J\bar\Phi$. In particular, $\6\bar{\Phi}=\6_J\bar{\Phi}=0.$
%
%%%%%%%%%%%%%%%%%%%%%%%%%%%%%%%%%%%%%%%%%%%%%%%%%%%%%%%%%%%%%%%
%
%%\section{Quaternionic cohomology groups}
%
%%%%%%%%%%%%%%%%%%%%%%%%%%%%%%%%%%%%%%%%%%%%%%%%%%%%%%%%%%%%%%%
%
For a hypercomplex manifold $(M,I,J,K)$ of real dimension $4n$, the complex $(\Lambda^{p,0}_I(M),\partial,\partial_J)$ leads to the \textsl{quaternionic Dolbeault cohomology groups} 
\begin{eqnarray*}
H_{\partial}^{p,0} &:=& \frac{\{ \phi\in {\Lambda}^{p,0}_I(M)\, |\, \6 \phi =0\}}{\6{\Lambda}^{p-1,0}_I(M)} = \frac{\ker \partial}{\Ima \partial},\\
H^{p,0}_{\partial_J} &:=& \frac{\{ \phi\in {\Lambda}_I^{p,0}(M)\, |\, \6_J\phi= 0\}}{\6_J{\Lambda}^{p-1,0}_I(M)} = \frac{\ker \partial_J}{\Ima \partial_J}.
\end{eqnarray*}
Moreover, one can define the \textsl{quaternionic Bott--Chern cohomology groups} 
\begin{eqnarray*}
H^{p,0}_{BC} := \frac{\{ \phi\in
  {\Lambda}^{p,0}_I(M)\, |\, \6 \phi =\6_J\phi=
  0\}}{\6\6_J{\Lambda}^{p-2,0}_I(M)} = \frac{\ker \partial \cap \ker \partial_J}{\Ima \partial \partial_J},
\end{eqnarray*} 
and the \textsl{quaternionic Aeppli cohomology groups} 
\begin{equation*}
H^{p,0}_{AE} := \frac{\{ \phi\in
  {\Lambda}_I^{p,0}(M)\, |\, \6\6_J\phi= 0\}}{\6{\Lambda}^{p-1,0}_I(M)+\6_J{\Lambda}^{p-1,0}_I(M)}= \frac{\ker \partial \partial_J}{\Ima \partial + \Ima \partial_J}.
\end{equation*} 
It has been shown in~\cite{gra-lej-ver} that all of these groups are finite-dimensional:

\begin{theorem}\cite{gra-lej-ver}
On a compact hypercomplex manifold, the groups $H^{p,0}_{BC}$ and $H^{p,0}_{AE}$ are finite-dimensional.
\end{theorem}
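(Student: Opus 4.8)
The plan is to build a Hodge theory for the pair $(\partial,\partial_J)$ by realising each of these cohomology groups as the kernel of a single self-adjoint elliptic fourth-order operator, following the scheme Schweitzer introduced for complex Bott--Chern cohomology (and Kodaira--Spencer for the associated Laplacians). The fact, stressed in the introduction, that we work inside the single bidegree $(p,0)$ rather than a genuine double complex is an advantage here: both differentials and all the operators built from them act on the same space $\Lambda^{p,0}_I(M)$. First I would fix a quaternionic Hermitian metric $g$ on $M$, which exists by the remark in the introduction; it induces an $L^2$-inner product on each $\Lambda^{p,0}_I(M)$ and hence formal adjoints $\partial^*$ and $\partial_J^*$.

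On $\Lambda^{p,0}_I(M)$ I would then define the quaternionic Bott--Chern Laplacian
\begin{equation*}
\square_{BC} = \partial^*\partial + \partial_J^*\partial_J + (\partial\partial_J)(\partial\partial_J)^* + (\partial\partial_J)^*(\partial\partial_J) + (\partial_J^*\partial)(\partial_J^*\partial)^* + (\partial_J^*\partial)^*(\partial_J^*\partial),
\end{equation*}
and the quaternionic Aeppli Laplacian
\begin{equation*}
\square_{AE} = \partial\partial^* + \partial_J\partial_J^* + (\partial\partial_J)(\partial\partial_J)^* + (\partial\partial_J)^*(\partial\partial_J) + (\partial\partial_J^*)(\partial\partial_J^*)^* + (\partial\partial_J^*)^*(\partial\partial_J^*).
\end{equation*}
Both are manifestly self-adjoint and non-negative, so expanding $\langle\square_{BC}\varphi,\varphi\rangle$ and $\langle\square_{AE}\varphi,\varphi\rangle$ as sums of squared norms (and using $(\partial\partial_J)^*=\partial_J^*\partial^*$, $(\partial_J^*\partial)^*=\partial^*\partial_J$) shows
\begin{equation*}
\ker\square_{BC}=\ker\partial\cap\ker\partial_J\cap\ker(\partial\partial_J)^*,\qquad \ker\square_{AE}=\ker\partial^*\cap\ker\partial_J^*\cap\ker\partial\partial_J .
\end{equation*}

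The crux is ellipticity, and this is the one place where the quaternionic structure enters essentially. The principal symbols of $\partial$ and $\partial_J$ at a nonzero real covector $\xi$ are both left exterior multiplications by $(1,0)$-forms: by $\xi^{1,0}$ for $\partial$, and, since $\partial_J=J^{-1}\circ\bar\partial\circ J$ and $J$ is an automorphism of the exterior algebra, by $J\xi^{0,1}$ for $\partial_J$ (up to a constant). Writing $\xi^{1,0}$ in terms of $\xi,I\xi$ and $J\xi^{0,1}$ in terms of $J\xi,K\xi$, and using that $\xi,I\xi,J\xi,K\xi$ are $\mathbb{R}$-linearly independent for $\xi\neq0$, one checks that $\xi^{1,0}$ and $J\xi^{0,1}$ are $\mathbb{C}$-linearly independent whenever $\xi\neq0$. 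This independence is exactly the quaternionic replacement for the independence of $\xi^{1,0}$ and $\xi^{0,1}$ used in the complex case. Once it is in hand, verifying that the symbol of $\square_{BC}$ (and of $\square_{AE}$) is an isomorphism off the zero section reduces to the same purely linear-algebraic computation as for Schweitzer's complex Bott--Chern Laplacian, so both operators are elliptic. I expect this symbol computation to be the main technical obstacle.

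Finally I would invoke the standard Hodge theory for a self-adjoint elliptic operator on the compact manifold $M$: the kernels $\ker\square_{BC}$ and $\ker\square_{AE}$ are finite-dimensional, and one has $L^2$-orthogonal decompositions $\Lambda^{p,0}_I(M)=\ker\square_{BC}\oplus\Ima\square_{BC}$ and similarly for $\square_{AE}$. A diagram chase using $\partial^2=\partial_J^2=0$ and $\partial\partial_J+\partial_J\partial=0$ then identifies $\ker\square_{BC}$ with harmonic representatives for $H^{p,0}_{BC}$, giving $H^{p,0}_{BC}\cong\ker\square_{BC}$, and likewise $H^{p,0}_{AE}\cong\ker\square_{AE}$; finite-dimensionality of both groups follows at once. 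Alternatively one could establish finiteness for $H^{p,0}_{BC}$ only and deduce the Aeppli case from a quaternionic Hodge-star duality $H^{p,0}_{AE}\cong H^{2n-p,0}_{BC}$, but treating the two Laplacians in parallel is cleaner.
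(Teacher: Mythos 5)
Your proposal is correct and follows essentially the same approach as the proof this paper relies on: the theorem is quoted from \cite{gra-lej-ver}, where finite-dimensionality is established precisely via Schweitzer-type fourth-order Bott--Chern and Aeppli Laplacians for the pair $(\partial,\partial_J)$ acting on $\Lambda^{p,0}_I(M)$, with ellipticity reduced to the pointwise $\mathbb{C}$-linear independence of $\xi^{1,0}$ and $J\xi^{0,1}$, which in turn comes from the independence of $\xi, I\xi, J\xi, K\xi$. Your choice of Laplacians, the kernel identifications, and the symbol argument all match that proof.
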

To ease the notation, we use $h_{BC}^{p,0}=\dim H_{BC}^{p,0}$, $h^{p,0}_{\delbar}=\dim H^{p,0}_{\delbar}$, etc.
%When $(M,I,J,K,\Phi)$ is a compact $SL(n,\mathbb{H})$-manifold, the groups $H^{p,0}_{BC}$ and $H^{2n-p,0}_{AE}$ are dual~\cite{gra-lej-ver}
%by considering the non-degenerate pairing on $H^{p,0}_{BC}\times H^{2n-p,0}_{AE}$ given by
%\begin{equation}\label{pairing}
%\left([\alpha],[\beta]\right)\longmapsto \int_M\alpha\wedge\beta\wedge\bar{\Phi}.
%\end{equation}
%Moreover, using the same pairing, one can deduce that 
%\begin{equation*}
%H_{\6}^{p,0}\simeq H_{\6}^{2n-p,0} \text{ and } H_{\6_J}^{p,0}\simeq H_{\6_J}^{2n-p,0}.
%\end{equation*}
%Let $(M,I,J,K)$ be a compact hypercomplex manifold of real dimension $4n$. 
Following J.~Varouchas~\cite{_Varouchas_}, we define the complex vector spaces
$$A^{p,0}:= \frac{\Ima \del \cap \Ima \delbar}{\Ima \del\delbar}, \quad B^{p,0}:= \frac{\ker \6 \cap \Ima \6_J}{\Ima \del\delbar}, \quad C^{p,0}:= \frac{\ker \del\delbar}{\ker \6 + \Ima \6_J},$$
$$D^{p,0}:= \frac{\Ima \6 \cap \ker \6_J}{\Ima \del\delbar}, \quad E^{p,0}:= \frac{\ker \del\delbar}{\Ima\6 + \ker \6_J}, \quad F^{p,0}:= \frac{\ker \del\delbar}{\ker \del + \ker \delbar},$$
where $\6$ and $\6_J$ are acting on forms of type $\Lambda^{p,0}_I(M)$ with $0\leqslant p \leqslant 2n$.
Once more the dimensions of these spaces will be denoted by the corresponding lower case letter, for example $a^{p,0}=\dim A^{p,0}$.

\begin{lemma}\label{Var_sequence}
The two sequences
\begin{eqnarray*}
0 \longrightarrow A^{p,0} \longrightarrow B^{p,0} \longrightarrow H_{\partial}^{p,0} \longrightarrow H_{AE}^{p,0} \longrightarrow C^{p,0} \longrightarrow 0
\end{eqnarray*}
\begin{eqnarray*}
0 \longrightarrow D^{p,0} \longrightarrow H_{BC}^{p,0} \longrightarrow H_{\partial}^{p,0} \longrightarrow E^{p,0} \longrightarrow F^{p,0} \longrightarrow 0
\end{eqnarray*}
are exact sequences of finite-dimensional vector spaces.
\end{lemma}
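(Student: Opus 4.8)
The statement is pure homological algebra over the relations $\del^2=\delbar^2=0$ and $\del\delbar=-\delbar\del$, with analysis entering only through the finite-dimensionality of the cohomologies involved. I would therefore first dispose of finiteness and then construct the two sequences by hand.

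For finiteness, the crucial elementary remark is that $\Ima\del\subseteq\ker\del$ and $\Ima\delbar\subseteq\ker\delbar$, together with $\del^2=\delbar^2=0$, force each of the three intersections $\Ima\del\cap\Ima\delbar$, $\ker\del\cap\Ima\delbar$ and $\Ima\del\cap\ker\delbar$ to sit inside $\ker\del\cap\ker\delbar$; for instance a form $\phi=\delbar\beta$ with $\del\phi=0$ satisfies $\delbar\phi=\delbar^2\beta=0$. As all three of these intersections contain $\Ima\del\delbar$, the map induced by the identity on representatives realises $A^{p,0}$, $B^{p,0}$ and $D^{p,0}$ as subspaces of $H^{p,0}_{BC}$. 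Dually, $\Ima\del+\Ima\delbar$ is contained in each of $\ker\del+\Ima\delbar$, $\Ima\del+\ker\delbar$ and $\ker\del+\ker\delbar$, so $C^{p,0}$, $E^{p,0}$ and $F^{p,0}$ are quotients of $H^{p,0}_{AE}$. Since $H^{p,0}_{BC}$ and $H^{p,0}_{AE}$ are finite-dimensional by the cited theorem of~\cite{gra-lej-ver} (and $H^{p,0}_{\del}$ is finite-dimensional as the cohomology of an elliptic complex), every term of both sequences is finite-dimensional.

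Every arrow is then the map induced by the identity on representatives, and I would check well-definedness case by case; the only place the quaternionic relation $\del\delbar=-\delbar\del$ is genuinely needed is to make the connecting maps $H^{p,0}_{\del}\to H^{p,0}_{AE}$ and $H^{p,0}_{\del}\to E^{p,0}$ land in $\ker\del\delbar$, since for a $\del$-closed $\phi$ one has $\del\delbar\phi=-\delbar\del\phi=0$. With the maps in place, injectivity at $A^{p,0}$, $D^{p,0}$ and surjectivity at $C^{p,0}$, $F^{p,0}$ are immediate from the descriptions above, and exactness at $B^{p,0}$, $H^{p,0}_{BC}$, $H^{p,0}_{AE}$, $E^{p,0}$ reduces to the set-theoretic identities already recorded (e.g.\ at $B^{p,0}$: a form of $\ker\del\cap\Ima\delbar$ killed in $H^{p,0}_{\del}$ lies in $\Ima\del$, hence in $\Ima\del\cap\Ima\delbar$, the numerator of $A^{p,0}$).

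The one substantive verification, which I expect to be the main obstacle, is exactness at the copy of $H^{p,0}_{\del}$ in each sequence. Here one starts from a $\del$-closed $\phi$ whose class dies downstream and must produce a lift. In the first sequence $[\phi]$ killed in $H^{p,0}_{AE}$ means $\phi=\del\alpha+\delbar\beta$; applying $\del$ and using $\del^2=0$ gives $\del\delbar\beta=0$, so $\delbar\beta\in\ker\del\cap\Ima\delbar$ represents a class of $B^{p,0}$ mapping to $[\phi]$. In the second sequence $[\phi]$ killed in $E^{p,0}$ means $\phi=\del\alpha+\psi$ with $\psi\in\ker\delbar$; again $\del\psi=\del\phi-\del^2\alpha=0$ puts $\psi\in\ker\del\cap\ker\delbar$, so it represents a class of $H^{p,0}_{BC}$ mapping to $[\phi]$. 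The reverse inclusions (image contained in kernel) follow from the consecutive compositions being visibly zero, and assembling these observations yields exactness of both sequences.
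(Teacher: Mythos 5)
Your proof is correct and takes essentially the same route as the paper, whose proof is a single line: exactness is verified directly on representatives \`a la Varouchas, and finiteness is pulled back from $h^{p,0}_{BC}$ and $h^{p,0}_{AE}$ by realising $A^{p,0}$, $B^{p,0}$, $D^{p,0}$ as subspaces of $H^{p,0}_{BC}$ and $C^{p,0}$, $E^{p,0}$, $F^{p,0}$ as quotients of $H^{p,0}_{AE}$, exactly as you do. The only (harmless) deviations are that you get finiteness of $H^{p,0}_{\partial}$ from ellipticity of $(\Lambda^{\bullet,0}_I(M),\partial)$ rather than deducing it from the exact sequences themselves, and that you rightly flag that the connecting maps into $H^{p,0}_{AE}$ and $E^{p,0}$ need $\partial\partial_J=-\partial_J\partial$, a point glossed over by the paper's remark that exactness ``only relies on $\partial^2=0$''.
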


\begin{proof}
Exactness only relies on $\partial^2=0$ whereas finiteness can be deduced from finiteness of $h_{BC}^{p,0}$ and $h_{AE}^{p,0}$.
\end{proof}

As in the complex setup, Varouchas' short exact sequences will help us to quantify the discrepancy between the quaternionic Dolbeault groups and the quaternionic Bott--Chern/Aeppli groups. Last, but not least, the following map will be a crucial tool in the analysis of quaternionic cohomology groups.

\begin{proposition}\label{conjugation-symmetry}%\cite{_Varouchas_}
On a hypercomplex manifold, the map
\begin{eqnarray*}
\bar{J} : \Lambda^{p,0}_I(M)&\longrightarrow&\Lambda^{p,0}_I(M),\\
\phi&\longmapsto& J(\bar{\phi}),
\end{eqnarray*}
induces the isomorphisms
\begin{eqnarray*}
H^{p,0}_{\partial} \simeq H^{p,0}_{\partial_J}, \quad B^{p,0} \simeq D^{p,0} \quad {\text{ and }} \quad C^{p,0}\simeq E^{p,0}.
\end{eqnarray*}
Here $\bar{\phi}$ denotes the complex conjugate of $\phi$ with respect to $I$. Moreover, %\todo{enough to write complex conjugate? not with respect to $I$ yes enough conjugate}
the maps
\begin{equation*}
E^{p,0} \xlongrightarrow{\partial_J} B^{p+1,0}  \quad {\text{ and }}\quad C^{p,0} \xlongrightarrow{\partial} D^{p+1,0}
\end{equation*}
are isomorphisms.
\end{proposition}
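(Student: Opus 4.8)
The plan is to deduce everything from the commutation behaviour of the operator $\bar{J}$ with the two differentials, and then read off the first three isomorphisms by tracking kernels and images through the defining quotients, leaving the last two to pure homological algebra.

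First I would record the elementary properties of $\bar{J}$. Since $J$ interchanges $\Lambda^{p,0}_I(M)$ and $\Lambda^{0,p}_I(M)$, while complex conjugation sends $\Lambda^{p,0}_I(M)$ to $\Lambda^{0,p}_I(M)$, the composition $\phi\mapsto J(\bar\phi)$ indeed lands back in $\Lambda^{p,0}_I(M)$; it is $\mathbb{C}$-antilinear. Because $J$ is a real operator it commutes with conjugation, $\overline{J\psi}=J\bar\psi$, and directly from the definition of the $J$-action one computes $J^2=(-1)^p\,\mathrm{Id}$ on $p$-forms (equivalently $J^{-1}=(-1)^pJ$). Hence $\bar{J}^2\phi=J\,\overline{J\bar\phi}=J^2\phi=(-1)^p\phi$, so $\bar{J}$ is a bijection of $\Lambda^{p,0}_I(M)$.

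The key computation is the commutation of $\bar{J}$ with $\partial$ and $\partial_J$. Using $\partial_J=J^{-1}\bar\partial J$, the identity $\overline{\partial\phi}=\bar\partial\bar\phi$, the relation $\overline{J\psi}=J\bar\psi$, and the degree sign $J^{-1}=(-1)^pJ$, I would verify $\bar{J}\,\partial_J=\partial\,\bar{J}$ (which comes out clean once one cancels $J J^{-1}$) and $\bar{J}\,\partial=-\partial_J\,\bar{J}$ (where the degree sign survives). Signs aside, this says $\bar{J}$ interchanges $\ker\partial\leftrightarrow\ker\partial_J$ and $\Ima\partial\leftrightarrow\Ima\partial_J$, and, using $\partial\partial_J=-\partial_J\partial$, preserves both $\Ima\partial\partial_J$ and $\ker\partial\partial_J$. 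I expect this sign bookkeeping to be the only real obstacle; everything afterwards is formal. With these interchanges, the first three isomorphisms follow by inspection of the quotients: $\bar{J}$ descends to $H^{p,0}_{\partial}\simeq H^{p,0}_{\partial_J}$; it sends $\ker\partial\cap\Ima\partial_J$ to $\Ima\partial\cap\ker\partial_J$ modulo $\Ima\partial\partial_J$, giving $B^{p,0}\simeq D^{p,0}$; and it sends $\ker\partial+\Ima\partial_J$ to $\Ima\partial+\ker\partial_J$ inside $\ker\partial\partial_J$, giving $C^{p,0}\simeq E^{p,0}$. These induced maps are $\mathbb{C}$-antilinear, but still identify the respective complex vector spaces.

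Finally, the maps $\partial_J:E^{p,0}\to B^{p+1,0}$ and $\partial:C^{p,0}\to D^{p+1,0}$ are handled purely formally from $\partial^2=\partial_J^2=0$ and $\partial\partial_J=-\partial_J\partial$. For the first, if $\partial\partial_J\phi=0$ then $\partial_J\phi\in\ker\partial\cap\Ima\partial_J$; moreover $\partial_J$ annihilates $\ker\partial_J$ and sends $\Ima\partial$ into $\Ima\partial\partial_J$, so the map descends to $E^{p,0}\to B^{p+1,0}$. Surjectivity follows by writing a class in $B^{p+1,0}$ as $[\partial_J\eta]$ with $\partial\partial_J\eta=0$, and injectivity from the observation that $\partial_J\phi=\partial\partial_J\xi=-\partial_J\partial\xi$ forces $\phi+\partial\xi\in\ker\partial_J$, whence $[\phi]=0$ in $E^{p,0}$. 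The argument for $\partial:C^{p,0}\to D^{p+1,0}$ is identical with the roles of $\partial$ and $\partial_J$ exchanged.
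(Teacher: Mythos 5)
The paper states Proposition~\ref{conjugation-symmetry} without proof, so there is no in-paper argument to compare against; judged on its own, your proof is correct and is exactly the argument one would expect. Your sign bookkeeping checks out: with $J^{-1}=(-1)^pJ$ on $p$-forms and $\overline{J\psi}=J\bar\psi$ (valid since $J$ is a real operator), one gets $\bar{J}\partial_J=\partial\bar{J}$, $\bar{J}\partial=-\partial_J\bar{J}$, and hence $\bar{J}\partial\partial_J=\partial\partial_J\bar{J}$, so $\bar{J}$ exchanges $\ker\partial\leftrightarrow\ker\partial_J$ and $\Ima\partial\leftrightarrow\Ima\partial_J$ while preserving $\ker\partial\partial_J$ and $\Ima\partial\partial_J$, which yields the three induced maps; since $\bar{J}^2=(-1)^p$ they are conjugate-linear bijections, which is all the paper ever uses (equality of dimensions, e.g.\ $b^{p,0}=d^{p,0}$). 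Your treatment of $\partial_J:E^{p,0}\to B^{p+1,0}$ and $\partial:C^{p,0}\to D^{p+1,0}$ --- well-definedness, surjectivity via $[\partial_J\eta]$ with $\partial\partial_J\eta=0$, and injectivity via $\partial_J\phi=\partial\partial_J\xi\Rightarrow\phi+\partial\xi\in\ker\partial_J$ --- uses only $\partial^2=\partial_J^2=0$ and $\partial\partial_J=-\partial_J\partial$, and each step is sound.
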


%%%%%%%%%%%%%%%%%%%%%%%%%%%%%%%%%%%%%%%%%%%%%%%%%%%%%%%%

\section{Quaternionic Fr\"{o}licher spectral sequence and $dd^c$-Lemma}\label{section3}

%%%%%%%%%%%%%%%%%%%%%%%%%%%%%%%%%%%%%%%%%%%%%%%%%%%%%%%%

In this Section we introduce a quaternionic analog of the Fr\"{o}licher spectral sequence 
% which always degenerates at the second page. 
%Moreover, 
and we show that degeneration at the first page is a weaker condition than the quaternionic $dd^c$-Lemma introduced in~\cite{gra-lej-ver}. For more information about the Fr\"{o}licher spectral sequence in the complex setup, we refer to~\cite{froelicher}.\\

The differential $\partial_J$ induces homomorphisms between the cohomology groups 
$$\cdots \xlongrightarrow{\partial_J} H^{p-1,0}_\partial \xlongrightarrow{\partial_J} H^{p,0}_\partial \xlongrightarrow{\partial_J} H^{p+1,0}_\partial \xlongrightarrow{\partial_J} \cdots.$$
As $\partial_J^2=0$, we may ask whether the above sequence has any chance to be exact. Even if this fails, we can still try to quantify the deviation from exactness. 
For this purpose, we define the complex vector spaces:
\begin{eqnarray*}
E_1^{p,0} &:=& H^{p,0}_\partial = \frac{ \{ \varphi \in \Lambda^{p,0}_I(M)~|~\partial \varphi = 0\} }{\6\Lambda_{I}^{p-1,0}(M)},\\
%\tilde{E}_2^{p,0} &:=& \frac{ \{ \varphi \in \Lambda^{p,0}_I(M)~|~\partial \varphi = 0 \text{ and } \partial_J \varphi = \partial \alpha\} }{\partial \Lambda^{p-1,0}_I(M)}, \\
E_2^{p,0} &:=& \frac{ \{ \varphi \in \Lambda^{p,0}_I(M)~|~\partial \varphi = 0 \text{ and } \partial_J \varphi = \partial \alpha\} }{\{\varphi \in \Lambda^{p,0}_I(M)~|~\varphi = \partial \beta + \partial_J \gamma \text{ and } \partial \gamma =0\}}.
\end{eqnarray*}
On a compact hypercomplex manifold, these vector spaces are finite-dimensional as $\dim E^{p,0}_2 \leqslant \dim E_1^{p,0}=h^{p,0}_\partial < \infty$. Note that, if
%$E_1^{p,0} \simeq \tilde{E}_2^{p,0}$ if and only if the map $H_{\partial}^{p,0} \xrightarrow{\partial_J} H_{\partial}^{p+1,0}$ vanishes. Moreover, if 
the maps $H^{p,0}_\partial \xlongrightarrow{\partial_J} H^{p+1,0}_\partial$ vanish for all $p$, then $E_1^{p,0} \simeq E_2^{p,0}$ for all $p$.
The groups $E_2^{p,0}$ measure the deviation from exactness of the above sequence. 
As in the present work, we just need the groups~$E_1^{p,0}$ and $E_2^{p,0}$, we may define the groups $E_{r+1}^{p,0}$ as the cohomology groups of the sequence
$$\cdots \xlongrightarrow{\partial_J} E_r^{p-1,0} \xlongrightarrow{\partial_J} E_r^{p,0} \xlongrightarrow{\partial_J} E_r^{p+1,0} \xlongrightarrow{\partial_J} \cdots.$$
As in the complex setup, we call $\{E_r^{p,0}\}_{p\in \mathbb{N}_0}$ the \textsl{$r^{\text{th}}$ page of the quaternionic Fr\"{o}licher spectral sequence}. The quaternionic Fr\"{o}licher spectral sequence is said to \textsl{degenerate at the $r^{\text{th}}$ page} if $E^{p,0}_r \simeq E^{p,0}_{r+1}$ for all $p$ (note that then also $E_{r}^{p,0} \simeq E_{r+i}^{p,0}$ for all~$p$ and all $i\geqslant 0$). %: we say that the quaternionic Fr\"{o}licher spectral sequence \textsl{degenerates at the $r^{\text{th}}$ page.}
%One checks that, for $r \geqslant 3$, the groups $E_r^{p,0}$ are trivial for all $p\geqslant 0.$ 
%We say that the quaternionic Fr\"{o}licher spectral sequence \textsl{degenerates at the second page}. 
%\begin{lemma}
%On compact hypercomplex manifolds, the quaternionic Fr\"{o}licher spectral sequence degenerates either at the first or at the second page.
%\end{lemma}
%\begin{proof}
%We show that the map $E_2^{p,0} \xlongrightarrow{\partial_J} E_2^{p+1,0}$ vanishes for all $p$.
%Let $\mathfrak{a} \in E_2^{p,0}$ be represented by $\varphi$. Then $\partial_J \varphi$ is $\partial$-exact and thus vanishes inside~$E_2^{p+1,0}$.
%\end{proof}
%This differs from the complex case where, for $r$ arbitrarily large, there exist compact complex manifolds whose Fr\"{o}licher spectral sequence does not degenerate at the $r^\text{th}$ page~\cite{bigalke-rollenske}. The reason for this difference boils down to the following observation. For complex manifolds the spectral sequence is based upon a double complex and thus a page corresponds to a quadrant of the plane. On the other hand, the differentials intervening in the quaternionic Fr\"{o}licher spectral sequence both act on the same component of the bidegree and a page reduces to a horizontal half-line. %Higher cohomologies, which correspond to non-horizontal jumps, then automatically vanish. 
%This result should also be compared to the results in~\cite{east-sin}. 
The next Lemma gives a sufficient condition for the quaternionic Fr\"{o}licher spectral sequence to degenerate at the first page:

%\begin{lemma}\label{sufficient}
%If $A^{p+1,0} \simeq B^{p+1,0}$, then $E_1^{p,0} \simeq \tilde{E}_2^{p,0}$.
%\end{lemma}
%
%\begin{proof}
%Let $\mathfrak{a} \in E_1^{p,0}$ be represented by $\alpha $. Then $\partial_J\alpha$ lies in $B^{p+1,0}$. By hypothesis, it also sits inside $A^{p+1,0}$ and hence is $\partial$-exact. This implies that $\mathfrak{a}$ belongs to $\tilde{E}_2^{p,0}$.
%\end{proof}
%However, this condition is not necessary as can be seen in Example 1 where we have $a^{3,0} \neq b^{3,0}$ but nevertheless $E_1^{2,0} \simeq \tilde{E}_2^{2,0}$.
\begin{lemma}\label{sufficientnew}
If $A^{p+1,0} \simeq B^{p+1,0}$, then $E_1^{p,0} \simeq E_2^{p,0}$.
\end{lemma}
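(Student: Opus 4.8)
The plan is to realise $E_2^{p,0}$ as an honest subquotient of $E_1^{p,0}=H^{p,0}_\partial$ and to read off the hypothesis through the first Varouchas sequence of Lemma~\ref{Var_sequence}. Write $Z:=\{\varphi\in\Lambda^{p,0}_I(M)\,|\,\partial\varphi=0,\ \partial_J\varphi\in\Ima\partial\}$ for the numerator and $\mathcal B:=\{\partial\beta+\partial_J\gamma\,|\,\partial\gamma=0\}$ for the denominator in the definition of $E_2^{p,0}$, so that $E_2^{p,0}=Z/\mathcal B$. First I would check, using only $\partial^2=\partial_J^2=0$ and $\partial\partial_J+\partial_J\partial=0$, the chain of inclusions $\Ima\partial\subseteq\mathcal B\subseteq Z\subseteq\ker\partial$: the first is seen by taking $\gamma=0$, the last is the defining condition $\partial\varphi=0$, and the middle follows since for $\varphi=\partial\beta+\partial_J\gamma$ with $\partial\gamma=0$ one has $\partial\varphi=\partial\partial_J\gamma=-\partial_J\partial\gamma=0$ and $\partial_J\varphi=\partial_J\partial\beta=-\partial\partial_J\beta\in\Ima\partial$. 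Since $E_1^{p,0}=\ker\partial/\Ima\partial$, the two groups differ exactly by the ``top'' piece $\ker\partial/Z$ and the ``bottom'' piece $\mathcal B/\Ima\partial$.

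Next I would translate the hypothesis. In the first sequence of Lemma~\ref{Var_sequence} the injection $A^{p+1,0}\hookrightarrow B^{p+1,0}$ is the map induced by $\Ima\partial\subseteq\ker\partial$, and its cokernel is the image of $B^{p+1,0}\to H^{p+1,0}_\partial$. Hence $A^{p+1,0}\simeq B^{p+1,0}$ is equivalent to the vanishing of that map, that is, to $\ker\partial\cap\Ima\partial_J\subseteq\Ima\partial$ in degree $p+1$. With this in hand the ``top'' piece collapses: for any $\varphi\in\ker\partial\cap\Lambda^{p,0}_I(M)$ we have $\partial_J\varphi\in\ker\partial\cap\Ima\partial_J$ in degree $p+1$ (indeed $\partial\partial_J\varphi=-\partial_J\partial\varphi=0$), so $\partial_J\varphi\in\Ima\partial$ and therefore $\varphi\in Z$. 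Thus $Z=\ker\partial$, and the quotient map yields a natural surjection $E_1^{p,0}\twoheadrightarrow E_2^{p,0}$ with kernel $\mathcal B/\Ima\partial$.

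It remains to kill the ``bottom'' piece, i.e.\ to prove $\mathcal B=\Ima\partial$, equivalently that $\partial_J\gamma\in\Ima\partial$ for every $\partial$-closed $\gamma\in\Lambda^{p-1,0}_I(M)$; this is exactly the vanishing of the induced differential $\partial_J\colon H^{p-1,0}_\partial\to H^{p,0}_\partial$. Here I would invoke the conjugation symmetry $\bar J$ of Proposition~\ref{conjugation-symmetry}, which intertwines $\partial$ and $\partial_J$ and induces $B^{p,0}\simeq D^{p,0}$. Applying $\bar J$ converts the statement ``$\partial_J\gamma\in\Ima\partial$ for $\partial$-closed $\gamma$'' into ``$\partial\gamma'\in\Ima\partial_J$ for $\partial_J$-closed $\gamma'$'', and the latter elements lie in $\Ima\partial\cap\ker\partial_J$; so the required vanishing is precisely the coincidence $A^{p,0}\simeq D^{p,0}$, which through $B^{p,0}\simeq D^{p,0}$ is the degree-$p$ companion $A^{p,0}\simeq B^{p,0}$ of the hypothesis.

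The main obstacle is therefore the injectivity, not the surjectivity: the stated degree-$(p+1)$ hypothesis cleanly forces $Z=\ker\partial$, but the denominator $\mathcal B/\Ima\partial$ is governed by the differential entering degree $p$ and thus by the degree-$p$ coincidence. In the base case $p=0$ there is nothing to do here, since $\Lambda^{-1,0}_I(M)=0$ forces $\mathcal B=\Ima\partial$ automatically; for $p\geqslant 1$ the plan is to extract the injectivity from the $\bar J$-symmetry as above, and I expect the careful bookkeeping of that reduction --- matching the images of the induced differential in adjacent degrees through $\bar J$ and the isomorphisms $B^{p,0}\simeq D^{p,0}$ and $E^{p,0}\xlongrightarrow{\partial_J}B^{p+1,0}$ --- to be the delicate part of the argument.
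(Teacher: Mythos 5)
Your first two paragraphs are, with the bookkeeping made explicit, exactly the paper's proof: the paper's entire argument is that for a $\partial$-closed $\varphi \in \Lambda^{p,0}_I(M)$ the form $\partial_J\varphi$ represents a class in $B^{p+1,0}$, which by the hypothesis comes from $A^{p+1,0}$ and is therefore $\partial$-exact; in your notation this is precisely $Z=\ker\partial$, i.e.\ the vanishing of the outgoing induced differential $H^{p,0}_\partial \xrightarrow{\partial_J} H^{p+1,0}_\partial$, giving the canonical surjection $E_1^{p,0}\twoheadrightarrow E_2^{p,0}$. The paper stops there.

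The ``bottom piece'' you then worry about is a genuine observation, but your proposed route cannot close it and you rightly leave it open: the vanishing of $\mathcal{B}/\Ima\partial = \Ima\left(\partial_J\colon H^{p-1,0}_\partial \to H^{p,0}_\partial\right)$ is a degree-$p$ statement, and no formal manipulation derives it from the degree-$(p+1)$ hypothesis. For an abstract complex with two anti-commuting differentials it is simply false: take $\Lambda^{0}=\langle x\rangle$, $\Lambda^{1}=\langle y,z\rangle$, $\Lambda^{2}=0$, with $\partial=0$ and $\partial_J x=y$; then $A^{2,0}\simeq B^{2,0}=0$ vacuously, while $\dim E_2^{1,0}=1<2=\dim E_1^{1,0}$. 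Your $\bar{J}$-reduction is legitimate ($\bar{J}$ does intertwine $\partial$ and $\partial_J$ up to sign), but it lands, as you say, on the unavailable degree-$p$ coincidence --- and note that $A^{p,0}\simeq D^{p,0}$ is a priori \emph{stronger} than the needed vanishing of $\Ima\, d_1^{p-1}$, so ``precisely'' overstates the equivalence. The reason this incompleteness does not sink the paper is that the operative meaning of $E_1^{p,0}\simeq E_2^{p,0}$ there is exactly what your first two paragraphs establish: in Lemma~\ref{frolicher-implies-Lemma} and Theorem~\ref{froh-lemma} only the vanishing of the outgoing map is ever extracted from it, and wherever genuine dimension equalities $\dim E_2^{p,0}=h^{p,0}_\partial$ are invoked (Theorems~\ref{vanishing_degree} and~\ref{dichotomy}), they come from Theorem~\ref{froelicher}, whose proof shows directly that the maps $H^{p,0}_\partial \xrightarrow{\partial_J} H^{p+1,0}_\partial$ vanish in \emph{all} degrees, in which case the incoming and outgoing readings coincide. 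In short: your surjectivity argument is the paper's proof verbatim; your injectivity discussion flags a real imprecision in the literal single-degree statement, but it is not resolvable from the stated hypothesis, and the paper implicitly sidesteps it by the weaker reading rather than by an argument you could have been expected to find.
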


\begin{proof}
Let $\mathfrak{a} \in E_1^{p,0}$ be represented by $\varphi$. Then $\partial_J \varphi$ lies in $B^{p+1,0}$. By hypothesis $A^{p+1,0} \simeq B^{p+1,0}$, so it also sits inside $A^{p+1,0}$ and hence is $\partial$-exact. 
%On the other hand, the hypothesis $A^{p,0} \simeq B^{p,0}$ implies that 
%$$\{\varphi \in \Lambda^{p,0}_I(M)~|~\varphi= \partial \beta + \partial_J \gamma \text{ and } \partial \gamma = 0\} \subseteq \{\varphi \in \Lambda^{p,0}_I(M)~|~\varphi= \partial \beta \}.$$
%Indeed, $\partial_J \gamma$ lies in $B^{p,0}$ and so by hypothesis also in $A^{p,0}$, i.e.~it is $\partial$-exact.
\end{proof}
However, this condition is not necessary as can be seen from Example 1 in Section~\ref{section11} where we have $a^{3,0} \neq b^{3,0}$ but nevertheless $E_1^{2,0} \simeq E_2^{2,0}$.
In~\cite{gra-lej-ver} a quaternionic version of the $dd^c$-Lemma has been introduced:
\begin{definition}
Let $(M,I,J,K)$ be a hypercomplex manifold of real dimension $4n$. We say that the $\partial \partial_J$-Lemma holds
if every $\partial$-closed, $\partial_J$-exact $(p,0)$-form in $\Lambda^{p,0}_I(M)$ is $\partial \partial_J$-exact, for
any $0\leqslant p \leqslant 2n$. This is equivalent to saying that every $\partial_J$-closed, $\partial$-exact form is $\partial \partial_J$-exact.
\end{definition}

Note that, by definition, the $\partial \partial_J$-Lemma holds for $(p,0)$-forms if and only if the vector space $B^{p,0}$ (or equivalently $D^{p,0}$) is trivial. In order to relate the $\partial \partial_J$-Lemma to the quaternionic Fr\"{o}licher spectral sequence, we need the following result:

\begin{lemma}\label{frolicher-implies-Lemma}
If $A^{p+1,0} \simeq 0$ and $E_1^{p,0} \simeq E_2^{p,0}$, then the map $H_{BC}^{p,0} \longrightarrow H_{\partial}^{p,0}$ is surjective, i.e.~$E^{p,0} \simeq 0$.
\end{lemma}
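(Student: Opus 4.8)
The plan is to deduce the statement from the second Varouchas sequence of Lemma~\ref{Var_sequence} at degree $p$,
\[
0 \longrightarrow D^{p,0} \longrightarrow H_{BC}^{p,0} \xrightarrow{\ \iota\ } H_\partial^{p,0} \xrightarrow{\ \pi\ } E^{p,0} \longrightarrow F^{p,0} \longrightarrow 0,
\]
where $\iota$ and $\pi$ are the obvious maps $[\phi]\mapsto[\phi]$. By exactness at $H_\partial^{p,0}$ the map $\iota$ is onto exactly when $\pi=0$, in which case $\operatorname{coker}\iota=\Ima\pi$ vanishes. So everything reduces to showing that the connecting map $\pi\colon H_\partial^{p,0}\to E^{p,0}$ is zero.

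To see this I would use Proposition~\ref{conjugation-symmetry}, which provides an isomorphism $\partial_J\colon E^{p,0}\xrightarrow{\ \sim\ }B^{p+1,0}$, together with the inclusion-induced map $j\colon B^{p+1,0}\to H_\partial^{p+1,0}$. By the first Varouchas sequence at degree $p+1$ the kernel of $j$ is $A^{p+1,0}$, so the hypothesis $A^{p+1,0}\simeq0$ makes $j$ injective. The key observation is the factorisation
\[
d_1^{(p)} = j\circ\partial_J\circ\pi,
\]
where $d_1^{(p)}\colon H_\partial^{p,0}\to H_\partial^{p+1,0}$, $[\phi]_\partial\mapsto[\partial_J\phi]_\partial$, is the first-page differential of the quaternionic Fr\"{o}licher spectral sequence. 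Since $E_2^{p,0}$ is a subquotient of $E_1^{p,0}$, the equality of dimensions forced by $E_1^{p,0}\simeq E_2^{p,0}$ gives $\ker d_1^{(p)}=E_1^{p,0}$, hence $d_1^{(p)}=0$; as $j$ and $\partial_J$ are injective this forces $\pi=0$.

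Equivalently --- and this is the computation I would actually write down --- let $\phi\in\Lambda^{p,0}_I(M)$ be $\partial$-closed. Degeneration gives $[\partial_J\phi]_\partial=0$, that is $\partial_J\phi\in\Ima\partial$; since also $\partial_J\phi\in\Ima\partial_J$, the identity $\Ima\partial\cap\Ima\partial_J=\Ima\partial\partial_J$ encoded by $A^{p+1,0}\simeq0$ lets me write $\partial_J\phi=\partial\partial_J\beta$. Then $\tilde\phi:=\phi+\partial\beta$ satisfies $\partial\tilde\phi=\partial_J\tilde\phi=0$ and $[\tilde\phi]_\partial=[\phi]_\partial$, so the class $[\phi]_\partial$ lifts to $H_{BC}^{p,0}$ and $\iota$ is surjective.

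The step I expect to be most delicate is the passage from $\partial$-closed forms to arbitrary $\partial\partial_J$-closed forms. The argument above only produces representatives in $\ker\partial$, so strictly speaking it controls $\Ima\pi=\operatorname{coker}\iota$ rather than all of $E^{p,0}$; by exactness one is left with $E^{p,0}\simeq F^{p,0}$, and $F^{p,0}\cong D^{p+1,0}$ (again via Proposition~\ref{conjugation-symmetry}) is exactly the obstruction measured by the $\partial\partial_J$-Lemma in degree $p+1$. Confirming that the two hypotheses also annihilate this residual, non-$\partial$-closed piece --- or that the intended conclusion is precisely the vanishing of $\operatorname{coker}\iota$ --- is the real point to pin down.
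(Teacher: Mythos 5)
Your explicit computation is precisely the paper's proof of this lemma: given a $\partial$-closed representative $\varphi$, degeneration supplies $\alpha$ with $\partial_J\varphi=\partial\alpha$, so $\partial_J\varphi\in\Ima\partial\cap\Ima\partial_J$, which equals $\Ima\partial\partial_J$ by $A^{p+1,0}\simeq 0$; writing $\partial_J\varphi=\partial\partial_J\beta$, the representative $\varphi+\partial\beta$ is $\partial_J$-closed and lifts the class to $H^{p,0}_{BC}$. Your factorisation $d_1^{(p)}=j\circ\partial_J\circ\pi$, with $\ker j=A^{p+1,0}$ read off from the first Varouchas sequence, is a correct repackaging that the paper does not spell out, and it has the added value of showing that, once $A^{p+1,0}\simeq 0$, surjectivity of $H^{p,0}_{BC}\to H^{p,0}_{\partial}$ is \emph{equivalent} to $d_1^{(p)}=0$.

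The reservation in your final paragraph is exactly right, and you should trust it: the argument controls only $\operatorname{coker}\left(H^{p,0}_{BC}\to H^{p,0}_{\partial}\right)$, i.e.\ it yields $E^{p,0}\simeq F^{p,0}$, and the clause ``i.e.~$E^{p,0}\simeq 0$'' in the statement does not follow from surjectivity --- in fact it is false, as the paper's own Example~1 in Section~\ref{section11} shows at $p=2$. There $a^{3,0}=0$ and the spectral sequence degenerates (every invariant $\partial$-closed $(2,0)$-form is already $\partial_J$-closed, so $H^{2,0}_{BC}\to H^{2,0}_{\partial}$ is onto, with $h^{2,0}_{BC}-d^{2,0}=5-1=4=h^{2,0}_{\partial}$), yet $e^{2,0}=f^{2,0}=1$, generated by $\varphi^{34}$, which satisfies $\partial\partial_J\varphi^{34}=0$ but is neither $\partial$- nor $\partial_J$-closed; correspondingly $b^{3,0}=1$, so the implication from $A^{p+1,0}\simeq 0$ and $E_1^{p,0}\simeq E_2^{p,0}$ to $B^{p+1,0}\simeq 0$ in Theorem~\ref{froh-lemma} also fails as literally stated. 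The repair is exactly along the line you indicate: either read the conclusion as the vanishing of the cokernel (equivalently $E^{p,0}\simeq F^{p,0}$, which via Proposition~\ref{conjugation-symmetry} identifies the residual piece with $B^{p+1,0}\simeq D^{p+1,0}$), or add the hypothesis $F^{p,0}\simeq 0$, which kills that piece. The latter is available where the lemma is actually applied in Theorem~\ref{_bott_chern_aeppli_equa_}, since the assumed equality there forces $f^{p,0}=0$ for all $p$; and in Theorem~\ref{dichotomy} surjectivity alone, combined with $h^{2,0}_{BC}=h^{2,0}_{\partial}$ to get injectivity, already suffices, so the intermediate deduction $b^{3,0}=0$ there should be bypassed.
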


\begin{proof}
Let $\mathfrak{a} \in H^{p,0}_{\partial}$. We need to find a representative $\varphi $ such that
$\partial_J \varphi =0.$
The assumption $E_1^{p,0} \simeq E_2^{p,0}$ implies that there exists some $\alpha$ such that $\partial_J \varphi = \partial \alpha$. Hence $\partial_J \varphi $ is both $\partial$-exact and $\partial_J$-exact. Finally, the hypothesis $A^{p+1,0} \simeq 0$ implies that $\partial_J \varphi$ lies in the Image of $\partial \partial_J$.
\end{proof}

This leads to the following relation between the $\partial \partial_J$-Lemma and the quaternionic Fr\"{o}licher spectral sequence:

\begin{theorem}\label{froh-lemma}
$B^{p+1,0} \simeq 0$ if and only if both $A^{p+1,0} \simeq 0$ and $E_1^{p,0} \simeq E_2^{p,0}$.
%If $B^{p+1,0}\simeq 0 \simeq B^{p,0}$, then both $A^{p+1,0} \simeq 0 \simeq A^{p,0}$ and $E_1^{p,0} \simeq E_2^{p,0}$. Conversely, if $A^{p+1,0} \simeq 0$ and $E_1^{p,0} \simeq E_2^{p,0}$, then $B^{p+1,0} \simeq 0$.
\end{theorem}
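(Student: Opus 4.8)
The plan is to derive the theorem directly from the structural results already established, the decisive ingredient being that Proposition~\ref{conjugation-symmetry} provides an \emph{isomorphism} $E^{p,0}\xlongrightarrow{\partial_J}B^{p+1,0}$; this identifies the conclusion $E^{p,0}\simeq 0$ of Lemma~\ref{frolicher-implies-Lemma} with the statement $B^{p+1,0}\simeq 0$ that we are after, and so turns a one-directional implication into the desired equivalence.

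For the implication ``$B^{p+1,0}\simeq 0$ $\Rightarrow$ $A^{p+1,0}\simeq 0$ and $E_1^{p,0}\simeq E_2^{p,0}$'', I would first observe that the first Varouchas sequence of Lemma~\ref{Var_sequence} begins with an injection $A^{p+1,0}\hookrightarrow B^{p+1,0}$; concretely this map is induced by the inclusion $\Ima\partial\cap\Ima\partial_J\subseteq\ker\partial\cap\Ima\partial_J$, which holds because $\partial^2=0$. Hence $B^{p+1,0}\simeq 0$ forces $A^{p+1,0}\simeq 0$, and in particular $A^{p+1,0}\simeq B^{p+1,0}$ with both sides trivial. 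Feeding this into Lemma~\ref{sufficientnew} yields $E_1^{p,0}\simeq E_2^{p,0}$, settling this direction.

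For the converse, I would take the two hypotheses $A^{p+1,0}\simeq 0$ and $E_1^{p,0}\simeq E_2^{p,0}$ and plug them verbatim into Lemma~\ref{frolicher-implies-Lemma}, obtaining $E^{p,0}\simeq 0$. The isomorphism $E^{p,0}\xlongrightarrow{\partial_J}B^{p+1,0}$ of Proposition~\ref{conjugation-symmetry} then gives $B^{p+1,0}\simeq 0$ at once.

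No genuine analytic difficulty arises here: modulo the finite-dimensionality already granted, the argument is formal linear algebra with the anti-commuting differentials $\partial$ and $\partial_J$. The only steps deserving care are the two that are imported rather than recomputed, namely the injectivity $A^{p+1,0}\hookrightarrow B^{p+1,0}$ read off from Varouchas' sequence and, above all, the assertion in Proposition~\ref{conjugation-symmetry} that $\partial_J\colon E^{p,0}\to B^{p+1,0}$ is an isomorphism rather than merely one-sided; it is precisely this bijectivity that upgrades Lemma~\ref{frolicher-implies-Lemma} from an implication into the biconditional of the theorem.
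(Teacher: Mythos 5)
Your proposal is correct and coincides with the paper's own proof: both directions use exactly the same ingredients in the same order, namely the injectivity of $A^{p+1,0}\to B^{p+1,0}$ from Varouchas' exact sequence together with Lemma~\ref{sufficientnew} for the forward implication, and Lemma~\ref{frolicher-implies-Lemma} combined with the isomorphism $\partial_J\colon E^{p,0}\to B^{p+1,0}$ of Proposition~\ref{conjugation-symmetry} for the converse. Your added remark spelling out why the map $A^{p+1,0}\hookrightarrow B^{p+1,0}$ is injective (the inclusion $\Ima\partial\cap\Ima\partial_J\subseteq\ker\partial\cap\Ima\partial_J$ granted by $\partial^2=0$) is accurate and merely makes explicit what the paper cites as exactness.
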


\begin{proof}
If $B^{p+1,0} \simeq 0$, then $A^{p+1,0} \simeq 0$ by exactness of Varouchas'~sequence. Lemma~\ref{sufficientnew} then implies that $E^{p,0}_1 \simeq E^{p,0}_2$. Conversely, we may apply Lemma~\ref{frolicher-implies-Lemma} to deduce that $E^{p,0} \simeq 0$. The isomorphism $\partial_J : E^{p,0} \to B^{p+1,0}$ from Proposition~\ref{conjugation-symmetry} finishes the proof.
%If $B^{p+1,0} \simeq 0 \simeq B^{p,0}$, then $A^{p+1,0} \simeq 0 \simeq A^{p,0}$ by exactness of Varouchas'~sequence. Lemma~\ref{sufficientnew} then implies that $E^{p,0}_1 \simeq E^{p,0}_2$. Conversely, we apply Lemma~\ref{frolicher-implies-Lemma} to get $E^{p,0} \simeq 0$. The isomorphism $\partial_J : E^{p,0} \to B^{p+1,0}$ from Proposition~\ref{conjugation-symmetry} finishes the proof.
\end{proof}

In words, if the $\partial \partial_J$-Lemma holds, then the quaternionic Fr\"{o}licher spectral sequence degenerates at the first page. However, in order to get the implication in the opposite direction, we need the additional assumption $A^{p+1,0} \simeq 0$. 
\begin{remark}
Example~1 in Section~\ref{section11} provides an example such that the quaternionic Fr\"{o}licher spectral sequence degenerates at the first page but such that the $\partial \partial_J$-Lemma does not hold. %Example~4 gives an example whose quaternionic spectral sequence does not degenerate at the first page but only at the second page.
\end{remark}
We will return to the quaternionic Fr\"{o}licher spectral sequence in Section~\ref{froelicherSL} when we show that the quaternionic Fr\"{o}licher spectral sequence always degenerates at the first page in real dimension $8$ on manifolds with special holonomy, called $SL(2,\mathbb{H})$-manifolds.
%\begin{question} \todo{Due to Misha's comment about $E_\infty^{p,0}$, I would like to take away this question. Also we do not have enough data to state the question.}
%Imitating the definition of the Euler characteristic, we may define for compact hypercomplex manifolds of real dimension $4n$ the alternating sum
%$$\chi := \sum \limits_{i=0}^{2n} (-1)^i \dim E^{i,0}_2.$$
%In Example 2 in Section~\ref{section11}, the quantity $\chi$ remains equal to zero for a whole path of hypercomplex structures. This suggests the following Question: is $\chi$ stable by hypercomplex deformations?
%\end{question}

%%%%%%%%%%%%%%%%%%%%%%%%%%%%%%%%%%%%%%%%%%%%%%%%%%%%%%%%

\section{Quaternionic Fr\"{o}licher-type inequality}\label{section4}

%%%%%%%%%%%%%%%%%%%%%%%%%%%%%%%%%%%%%%%%%%%%%%%%%%%%%%%%

%In this section, we make precise, in the hypercomplex case, \`a la Fr\"{o}licher-type inequality proved by D.~Angella and A.~Tomassini in~\cite{ang-tom-1} in a very general setting (see also~\cite{_Angella-Tomassini_}).
%We deduce a cohomological criterion for as done by~\cite{ang-tom-1} a cohomological criterion equivalent to a quaternionic version of the $dd^c$-lemma. 

In this Section, we give a quaternionic version of the Fr\"{o}licher-type inequality proven by D.~Angella and A.~Tomassini in~\cite{ang-tom-1} in a very general setting (see also~\cite{_Angella-Tomassini_}). As in~\cite{ang-tom-1}, we deduce a cohomological criterion for the quaternionic $dd^c$-Lemma. Since $\left({\Lambda}^{p,0}_I(M),\6,\6_J\right)$ is not a double complex, part of our proof differs slightly. For the reader's convenience, we give the details.
%
%We deduce a Fr\"{o}licher-type inequality for the quaternionic Bott--Chern and Aeppli groups as proven by D.~Angella and A.~Tomassini in~\cite{ang-tom-1}.

\begin{theorem}\cite{ang-tom-1}\label{froh_global}
Let $(M,I,J,K)$ be a compact hypercomplex manifold of real dimension $4n$. Then
\begin{equation*}
h_{BC}^{p,0}+h_{AE}^{p,0} \geqslant 2~h^{p,0}_{\partial} \geqslant 2 \dim E^{p,0}_2
\end{equation*}
for any $0\leqslant p \leqslant 2n.$
\end{theorem}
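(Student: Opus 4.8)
The plan is to reduce the statement to a counting argument on the two Varouchas exact sequences of Lemma~\ref{Var_sequence}, using the conjugation symmetries of Proposition~\ref{conjugation-symmetry} to control the error terms. The rightmost inequality $2h_\partial^{p,0}\geqslant 2\dim E_2^{p,0}$ is immediate and was already noted in Section~\ref{section3}: by construction $E_2^{p,0}$ is a subquotient of $E_1^{p,0}=H_\partial^{p,0}$ (it is the $\partial_J$-cohomology at the $p$-th spot of the first page), so $\dim E_2^{p,0}\leqslant h_\partial^{p,0}$. Thus the real content is the leftmost inequality $h_{BC}^{p,0}+h_{AE}^{p,0}\geqslant 2h_\partial^{p,0}$.

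For this, I would first exploit the vanishing of the alternating sum of dimensions in an exact sequence. All terms appearing in the two Varouchas sequences are finite-dimensional by Lemma~\ref{Var_sequence}, so the computation is legitimate. The first sequence
$$0 \longrightarrow A^{p,0} \longrightarrow B^{p,0} \longrightarrow H_\partial^{p,0} \longrightarrow H_{AE}^{p,0} \longrightarrow C^{p,0} \longrightarrow 0$$
gives $h_{AE}^{p,0}=h_\partial^{p,0}+a^{p,0}-b^{p,0}+c^{p,0}$, while the second sequence
$$0 \longrightarrow D^{p,0} \longrightarrow H_{BC}^{p,0} \longrightarrow H_\partial^{p,0} \longrightarrow E^{p,0} \longrightarrow F^{p,0} \longrightarrow 0$$
gives $h_{BC}^{p,0}=h_\partial^{p,0}+d^{p,0}-e^{p,0}+f^{p,0}$. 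Summing the two identities yields
$$h_{BC}^{p,0}+h_{AE}^{p,0}=2h_\partial^{p,0}+a^{p,0}+f^{p,0}+\left(d^{p,0}-b^{p,0}\right)+\left(c^{p,0}-e^{p,0}\right).$$

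The decisive step is then to apply Proposition~\ref{conjugation-symmetry}, whose map $\bar{J}$ furnishes isomorphisms $B^{p,0}\simeq D^{p,0}$ and $C^{p,0}\simeq E^{p,0}$; hence $d^{p,0}=b^{p,0}$ and $c^{p,0}=e^{p,0}$, and the two parenthesised defect terms vanish. This leaves the exact formula
$$h_{BC}^{p,0}+h_{AE}^{p,0}=2h_\partial^{p,0}+a^{p,0}+f^{p,0},$$
and since $a^{p,0},f^{p,0}\geqslant 0$ the desired inequality follows at once, with equality if and only if $a^{p,0}=f^{p,0}=0$.

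I expect this cancellation to be the only genuinely delicate point, and the place where the quaternionic setting departs from the complex one. In the classical double-complex picture the counterparts of $b=d$ and $c=e$ come for free from complex conjugation interchanging $\partial$ and $\bar{\partial}$; here, where $(\Lambda^{p,0}_I(M),\partial,\partial_J)$ is a single complex carrying two anti-commuting differentials rather than a bigraded double complex, the required reflection has to be supplied instead by the $\bar{J}$-involution of Proposition~\ref{conjugation-symmetry}. Once that symmetry is in hand, everything else is bookkeeping with the Varouchas sequences, and the exact formula above also isolates the defect $a^{p,0}+f^{p,0}$ that governs the cohomological criterion for the $\partial\partial_J$-Lemma.
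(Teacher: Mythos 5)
Your proposal is correct and follows essentially the same route as the paper: summing the alternating dimension identities from the two Varouchas sequences, cancelling $b^{p,0}=d^{p,0}$ and $c^{p,0}=e^{p,0}$ via the $\bar{J}$-isomorphisms of Proposition~\ref{conjugation-symmetry} to obtain $h_{BC}^{p,0}+h_{AE}^{p,0}=2h_{\partial}^{p,0}+a^{p,0}+f^{p,0}$, and noting $\dim E_2^{p,0}\leqslant h_{\partial}^{p,0}$ since $E_2^{p,0}$ is a subquotient of $E_1^{p,0}$. Your closing observation that the defect $a^{p,0}+f^{p,0}$ governs the equality case is likewise exactly how the paper uses this formula in Theorem~\ref{_bott_chern_aeppli_equa_}.
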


\begin{proof}
By exactness of Varouchas' short exact sequences, we have
\begin{equation*}
a^{p,0} - b^{p,0} + h_{\partial}^{p,0} - h_{AE}^{p,0} + c^{p,0}  =0,
\end{equation*}
\begin{equation*}
d^{p,0} - h_{BC}^{p,0} + h_{\partial}^{p,0} - e^{p,0} + f^{p,0}  =0.
\end{equation*}
Summing up both we get,
\begin{eqnarray*}
h_{BC}^{p,0} + h_{AE}^{p,0} &=& 2 h_{\partial}^{p,0} + a^{p,0} + f^{p,0} + \left(c^{p,0}-e^{p,0}\right) + \left( d^{p,0}-b^{p,0} \right)\\
&=& 2 h_{\partial}^{p,0} + a^{p,0} + f^{p,0} \\
& \geqslant& 2 h_{\partial}^{p,0} \\
& \geqslant& 2 \dim E_2^{p,0}.
\end{eqnarray*}
The second equality follows from Proposition~\ref{conjugation-symmetry}.
\end{proof}

%Note that on a general compact hypercomplex manifold both inequalities may be strict: the first inequality is strict in Example 1: 
%$$10= h^{2,0}_{BC}+ h^{2,0}_{AE} > 2 h^{2,0}_\partial = 8$$
%and the second one in Example 4:
%$$h^{1,0}_\partial > \dim E^{1,0}_2.$$
%The case in which both inequalities become equalities only happens on very special hypercomplex manifolds. 
The following Theorem shows that equality happens precisely when the quaternionic $dd^c$-Lemma is satisfied.

\begin{theorem}\label{_bott_chern_aeppli_equa_}\cite{_Angella-Tomassini_,ang-tom-1} 
Let $(M,I,J,K)$ be a compact hypercomplex manifold of real dimension $4n.$
Then the $\6\6_J$-Lemma holds if and only if the equality
$$h_{BC}^{p,0} + h_{AE}^{p,0}= 2 \dim E_2^{p,0}$$
holds for all $0\leqslant p \leqslant 2n.$
\end{theorem}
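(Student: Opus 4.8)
The plan is to characterize the $\partial\partial_J$-Lemma (for all $p$) through the vanishing of the obstruction spaces appearing in Varouchas' sequences, and then to show that this vanishing is equivalent to the numerical equality. Recall from the proof of Theorem~\ref{froh_global} that, using Proposition~\ref{conjugation-symmetry} (which gives $c^{p,0}=e^{p,0}$ and $b^{p,0}=d^{p,0}$), we obtained the exact identity
\[
h_{BC}^{p,0}+h_{AE}^{p,0}=2\,h_\partial^{p,0}+a^{p,0}+f^{p,0},
\]
together with $h_\partial^{p,0}\geqslant \dim E_2^{p,0}$. Hence the equality $h_{BC}^{p,0}+h_{AE}^{p,0}=2\dim E_2^{p,0}$ is equivalent to the two conditions $a^{p,0}=f^{p,0}=0$ and $h_\partial^{p,0}=\dim E_2^{p,0}$ holding simultaneously.

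The forward direction is the clean one: I would assume the $\partial\partial_J$-Lemma, i.e.\ $B^{p,0}\simeq 0$ (equivalently $D^{p,0}\simeq 0$) for all $p$. By exactness of Varouchas' sequences, $B^{p,0}=0$ forces $A^{p,0}=0$, so $a^{p,0}=0$ for all $p$. One then shows the remaining obstruction spaces vanish: from the second Varouchas sequence and the isomorphisms of Proposition~\ref{conjugation-symmetry} ($E^{p,0}\simeq C^{p,0}$, and $\partial_J:E^{p,0}\xrightarrow{\sim}B^{p+1,0}=0$), one gets $e^{p,0}=c^{p,0}=0$ and thus $f^{p,0}=0$, while the map $H_{BC}^{p,0}\to H_\partial^{p,0}$ becomes an isomorphism. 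It remains to see $h_\partial^{p,0}=\dim E_2^{p,0}$, which follows because $B^{p+1,0}=0$ means every $\partial_J$ of a $\partial$-closed form is already $\partial\partial_J$-exact, collapsing the $E_2$ page onto $E_1=H_\partial$. Plugging $a^{p,0}=f^{p,0}=0$ and $h_\partial^{p,0}=\dim E_2^{p,0}$ into the boxed identity yields the equality.

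The converse is where the real work lies. Assuming the equality holds for every $p$, the identity above immediately gives $a^{p,0}=f^{p,0}=0$ and $h_\partial^{p,0}=\dim E_2^{p,0}$ for all $p$. The task is to upgrade these into $B^{p,0}=0$. From $h_\partial^{p,0}=\dim E_2^{p,0}$ together with the characterization of $E_2^{p,0}$, I would argue that the induced maps $H_\partial^{p,0}\xrightarrow{\partial_J}H_\partial^{p+1,0}$ vanish, forcing $E_1\simeq E_2$, i.e.\ first-page degeneration. Combined with $a^{p+1,0}=0$, Theorem~\ref{froh-lemma} then delivers exactly $B^{p+1,0}\simeq 0$; since this runs over all $p$, the $\partial\partial_J$-Lemma holds. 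The main obstacle I anticipate is the bookkeeping to extract \emph{simultaneous} vanishing across all bidegrees: the equality is assumed for all $p$, and one must carefully feed the degree-$p$ information into the degree-$(p+1)$ instance of Varouchas' sequences and of Theorem~\ref{froh-lemma}, making sure the chain of implications closes up without circularity. A secondary subtlety is verifying that $h_\partial^{p,0}=\dim E_2^{p,0}$ genuinely forces the connecting $\partial_J$-maps on cohomology to vanish, rather than merely constraining their ranks, which is what licenses the appeal to Theorem~\ref{froh-lemma}.
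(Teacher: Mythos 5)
Your proposal is correct and follows essentially the same route as the paper's proof: the identity $h_{BC}^{p,0}+h_{AE}^{p,0}=2\,h_\partial^{p,0}+a^{p,0}+f^{p,0}$ from the proof of Theorem~\ref{froh_global}, Lemma~\ref{sufficientnew} together with Proposition~\ref{conjugation-symmetry} for the forward direction, and Theorem~\ref{froh-lemma} for the converse. The subtlety you flag is harmless and resolves in one line: since $\dim E_2^{p,0}=\dim E_1^{p,0}-\operatorname{rank}\bigl(\partial_J\colon E_1^{p,0}\to E_1^{p+1,0}\bigr)-\operatorname{rank}\bigl(\partial_J\colon E_1^{p-1,0}\to E_1^{p,0}\bigr)$, the equality $h_\partial^{p,0}=\dim E_2^{p,0}$ for all $p$ indeed forces all connecting maps to vanish, which is exactly the sense of $E_1^{p,0}\simeq E_2^{p,0}$ required in Theorem~\ref{froh-lemma}.
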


\begin{proof}
If we assume the equality to hold, then the proof of Theorem~\ref{froh_global} shows that both $E_1^{p,0} \simeq E_2^{p,0}$ for all $p$ and $a^{p,0}=0=f^{p,0}$ for all $p$. Theorem~\ref{froh-lemma} implies that $B^{p,0} \simeq 0$ for all $p$ and hence the $\partial \partial_J$-Lemma holds. Conversely, if the $\partial \partial_J$-Lemma holds, then $b^{p,0}=0=d^{p,0}$ for all $p$. Exactness of Varouchas' sequences implies that $a^{p,0}=0=f^{p,0}$ and by Lemma~\ref{sufficientnew} we also get $E_1^{p,0} \simeq E_2^{p,0}$.
\end{proof}

\section{Metric structures}\label{section5}

%%%%%%%%%%%%%%%%%%%%%%%%%%%%%%%%%%%%%%%%%%%%%%%%%%%%%%%%%%%%%%

Up to this stage, the results have been cohomological in nature. We will now start to endow the hypercomplex manifolds with additional geometric structures. Recall the following special Hermitian metrics defined on complex manifolds, see for example~\cite{pop}:
$$\text{Gauduchon} \xleftarrow{} \text{strongly Gauduchon} \xleftarrow{} \text{K\"{a}hler}.$$
In this short Section, we introduce the quaternionic counterparts of these metrics. A \textsl{quaternionic Hermitian metric} on a hypercomplex manifold $(M,I,J,K)$ is a Riemannian metric~$g$ satisfying
\begin{equation*}
g(\cdot,\cdot)=g(I\cdot,I\cdot)=g(J\cdot,J\cdot)=g(K\cdot,K\cdot).
\end{equation*}
Given a quaternionic Hermitian metric $g$ on $(M,I,J,K)$, we have
the following differential forms
\begin{equation*}\label{_three_2_forms_qH_Equation_}
\omega_I(\cdot, \cdot) := g(I\cdot, \cdot), \ \ \omega_J(\cdot, \cdot) := g(J\cdot, \cdot)
, \ \ \omega_K(\cdot, \cdot) := g(K\cdot, \cdot).
\end{equation*}
The form $$\Omega:= \omega_J + \sqrt{-1}\omega_K$$
is of Hodge type $(2,0)$ with respect to the complex structure $I$, i.e.~$\Omega \in \Lambda^{2,0}_I(M).$ Moreover, it is non-degenerate. Actually we can reconstruct $g$ from~$\Omega$. Indeed, for any vector fields $X,Y$ of type $T^{1,0}_I(M)$ we have
$$g(X,\bar{Y})=\frac{1}{2}\Omega(X,J\bar{Y}).$$

\begin{definition}
Let $(M,I,J,K,\Omega)$ be a quaternionic Hermitian manifold of real dimension~$4n$.
\begin{itemize}
\item{If $\partial \partial_J \Omega^{n-1} =0$, then the metric is called \textsl{quaternionic Gauduchon} (see~\cite{gra-lej-ver}). This is the quaternionic analog of the Hermitian Gauduchon metric~\cite{gau}.}
\item{If $\partial \Omega^{n-1}$ is $\partial_J$-exact, then the metric is called \textsl{quaternionic strongly Gauduchon}. This is the quaternionic analog of the Hermitian strongly Gauduchon metric~\cite{pop}.}
\item{If $\partial \Omega =0$, then the metric is called \textsl{hyperk\"{a}hler with torsion}, HKT for short~\cite{how-pap}. For more about HKT metrics see~\cite{gra-poon}.}
\item{If $d\Omega=0$, then the metric is called \textsl{hyperk\"{a}hler}~\cite{bes}.}
\end{itemize} 
\end{definition}
One checks that
$$\text{quaternionic Gauduchon} \xleftarrow{} \text{quaternionic strongly Gauduchon} \xleftarrow{} \text{HKT}.$$
An important feature in non-K\"{a}hler complex geometry is that a complex manifold always admits a Hermitian Gauduchon metric. In the next Section we will see that, if we impose a constraint on the holonomy, then we similarly get existence of a quaternionic Gauduchon metric on hypercomplex manifolds. 

%In order to recover the same phenomemon for hypercomplex manifolds, we need to impose an additional holonomy constraint as explained in the next Section.\todo{mysterious and unclear sentence. To justify introducing sl(n,H) is to say that a Hodge theory is obtained when sl(n,H) Patrick: ok}

%%%%%%%%%%%%%%%%%%%%%%%%%%%%%%%%%%%%%%%%%%%%%%%%%%%%%%%%%%%%%%

\section{$SL(n,\mathbb{H})$-manifolds}\label{section6}

%%%%%%%%%%%%%%%%%%%%%%%%%%%%%%%%%%%%%%%%%%%%%%%%%%%%%%%%%%%%%%

First, we recall that a hypercomplex manifold admits a canonical connection. 

\begin{theorem}\cite{oba}
A hypercomplex manifold $(M,I,J,K)$ admits a unique torsion-free connection preserving all three complex structures $I,J,K.$ This connection is called the Obata connection.
\end{theorem}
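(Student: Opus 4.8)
The plan is to split the statement into uniqueness and existence. Uniqueness is a short pointwise computation in quaternionic linear algebra, whereas existence is the substantive part and rests on the integrability of $I,J,K$; I expect the whole difficulty to be concentrated there.

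For uniqueness, suppose $\nabla$ and $\tilde\nabla$ are two torsion-free connections preserving all three complex structures, and set $A(X,Y):=\nabla_XY-\tilde\nabla_XY$. Since both connections are torsion-free, $A$ is symmetric, $A(X,Y)=A(Y,X)$; since both preserve each $L\in\{I,J,K\}$, the identity $\nabla_X(LY)=L\nabla_XY$ (and likewise for $\tilde\nabla$) gives $A(X,LY)=L\,A(X,Y)$, and combining this with symmetry also yields $A(LX,Y)=L\,A(X,Y)$. The trick is then to evaluate $A(IX,JY)$ in two ways: pulling $J$ out of the second slot and then $I$ out of the first gives $A(IX,JY)=JI\,A(X,Y)=-K\,A(X,Y)$, whereas first swapping the two arguments and then extracting $I$ and $J$ gives $A(IX,JY)=IJ\,A(X,Y)=K\,A(X,Y)$. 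Hence $2K\,A(X,Y)=0$, and since $K$ is invertible we get $A\equiv0$, so the two connections coincide.

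For existence, I would realise the hypercomplex structure as a reduction of the frame bundle to $G=GL(n,\mathbb H)$, i.e.~to the frames in which $I,J,K$ act as the standard quaternion units, and then invoke the torsion-normalisation theory for $G$-structures. Concretely, pick any connection $D$ compatible with the reduction (compatible connections always exist); every other compatible connection has the form $\nabla=D+\Gamma$ with $\Gamma$ a $1$-form valued in the endomorphisms commuting with $I,J,K$, that is in $\mathfrak{gl}(n,\mathbb H)$. Under such a change the torsion shifts by the Spencer coboundary $\delta\Gamma$, so a torsion-free compatible connection exists precisely when the torsion of $D$ vanishes in the cokernel of $\delta$, namely in the intrinsic-torsion module $\Lambda^2V^*\otimes V/\delta(V^*\otimes\mathfrak{gl}(n,\mathbb H))$ with $V=\mathbb R^{4n}$. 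The hard part will be to identify this intrinsic-torsion module with the space carrying the Nijenhuis tensors of $I,J,K$; once that identification is in place, integrability of all three complex structures forces the intrinsic torsion to vanish, and a torsion-free compatible connection is produced.

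The same algebraic input also clarifies why uniqueness was painless: two torsion-free compatible connections differ by a $\Gamma$ with $\delta\Gamma=0$, i.e.~by an element of the first prolongation $\mathfrak{gl}(n,\mathbb H)^{(1)}$, and this prolongation vanishes — the coordinate-free shadow of the computation above. I expect the Nijenhuis-versus-intrinsic-torsion identification to be the only genuinely delicate point; everything else is either formal $G$-structure theory or the short quaternionic computation already carried out for uniqueness.
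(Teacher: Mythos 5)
The paper itself offers no proof of this statement --- it is quoted verbatim from Obata \cite{oba} --- so the benchmark is the classical argument. Your uniqueness half is complete and correct: the difference tensor $A$ of two torsion-free connections preserving the structures is symmetric and satisfies $A(X,LY)=L\,A(X,Y)=A(LX,Y)$ for $L\in\{I,J\}$, and your two evaluations of $A(IX,JY)$ give $-K\,A=JI\,A=IJ\,A=K\,A$, hence $A\equiv 0$. Two remarks: you never use preservation of $K$ (it is automatic from $K=IJ$), and your closing observation is exactly right --- the computation is the statement $\mathfrak{gl}(n,\mathbb{H})^{(1)}=\ker\delta=0$, which is why uniqueness is painless.

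The existence half, however, is a programme rather than a proof, and the gap sits exactly where you place it: the claim that integrability of $I,J,K$ forces the intrinsic-torsion class in $\Lambda^2V^*\otimes V/\delta\bigl(V^*\otimes\mathfrak{gl}(n,\mathbb{H})\bigr)$ to vanish \emph{is} the content of Obata's theorem, and you leave it as a black box; a priori the intrinsic torsion could have components invisible to the Nijenhuis tensors, in which case integrability alone would not suffice. Your plan is in fact completable, and a dimension count shows it is tight: since $\ker\delta=0$, one gets $\dim\operatorname{coker}\delta=8n^2(4n-1)-16n^3=8n^2(2n-1)$, which equals the dimension of the space of pairs $(N_I,N_J)$, each Nijenhuis tensor of an almost-complex structure on $\mathbb{R}^{4n}$ living in a module of real dimension $4n^2(2n-1)$; the torsion of any compatible connection maps to $(N_I,N_J)$, this map kills $\delta(V^*\otimes\mathfrak{gl}(n,\mathbb{H}))$ because the Nijenhuis tensors are connection-independent, and what remains to be proved --- the genuinely delicate point you postpone --- is injectivity of the induced map on $\operatorname{coker}\delta$, equal dimensions then giving an isomorphism. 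Note also that only $N_I=N_J=0$ is needed; integrability of $K$ then comes for free from the torsion-free connection preserving $K=IJ$. Obata's own route bypasses the representation theory entirely: he corrects an arbitrary torsion-free connection by an explicit tensor built from the covariant derivatives of $I$ and $J$ and verifies directly, using $N_I=N_J=0$, that the corrected connection preserves both structures and remains torsion-free. So: your uniqueness argument stands as written; for existence you must either carry out the cokernel identification or substitute the explicit construction --- as it stands, the central step is asserted, not proved.
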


Since the Obata connection preserves the hypercomplex structure, its holonomy lies in the general quaternionic linear group $GL(n, {\Bbb H})$ (see for instance~\cite{_Soldatenkov:SU(2)_}).
However, in many examples such as nilmanifolds~\cite{_BDV:nilmanifolds_}, the holonomy is actually contained in $SL(n, {\Bbb H})$, the commutator subgroup of $GL(n,\Bbb H)$
(for more details see~\cite{_Verbitsky:canoni_,_Verbitsky:HKT_}).

\begin{definition}
A hypercomplex manifold of real dimension $4n$ is called an $SL(n, {\Bbb H})$-manifold if the holonomy of the Obata
connection lies in $SL(n, {\Bbb H})$.
\end{definition}

For any $SL(n, {\Bbb H})$-manifold, the
canonical bundle~$\Omega^{2n,0}_I(M)$ is holomorphically trivial~\cite{_Verbitsky:canoni_}.
The converse is true if, moreover, the manifold is compact and admits an HKT metric~\cite{_Verbitsky:canoni_}.
We denote an $SL(n, {\Bbb H})$-manifold by $(M,I,J,K,\Phi)$ where
$\Phi$ is a nowhere degenerate 
%\todo{more details? Mehdi: yes, add the converse sentence}
holomorphic section of $\Omega^{2n,0}_I(M)$. Furthermore, we can assume that $\Phi$ satisfies
$\Phi=J\bar\Phi$. In particular, $\6\bar{\Phi}=\6_J\bar{\Phi}=0.$\\

Whilst HKT metrics or quaternionic strongly Gauduchon metrics do not always exist on hypercomplex manifolds (see~\cite{fin-gra,swa} and Remark~\ref{non-existenceSG}) the following result shows that $SL(n, \mathbb{H})$-manifolds always admit a quaternionic Gauduchon metric.

\begin{proposition}\cite{gra-lej-ver}
Let $(M,I,J,K,\Phi,\Omega)$ be an $SL(n,\Bbb H)$-manifold endowed with a quaternionic Hermitian metric $g$. Then there exists a unique (up to constant) positive function~$\mu$ such that $\mu g$ is quaternionic Gauduchon.
\end{proposition}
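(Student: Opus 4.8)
The plan is to mimic the classical Gauduchon existence proof on complex manifolds, where one finds a conformal factor solving a linear elliptic PDE of the form $\Delta u = f$ with $\int_M f = 0$ automatically guaranteed by a Stokes-type/integration argument. Concretely, writing the sought metric as $\mu g$ with $\mu > 0$, the quaternionic Gauduchon condition $\partial\partial_J\bigl((\mu g)\text{-associated }\Omega^{n-1}\bigr)=0$ should reduce, after expanding $(\mu\,\Omega)^{n-1}=\mu^{n-1}\Omega^{n-1}$ and pairing against the trivializing section, to a second-order linear equation for $v:=\mu^{n-1}$ (or for $\log\mu$). The key structural input that is special to the $SL(n,\mathbb{H})$ setting is the nowhere-degenerate holomorphic section $\Phi$ of $\Omega^{2n,0}_I(M)$ with $\Phi=J\bar\Phi$ and $\partial\Phi=\partial_J\Phi=0$: this is precisely the device that replaces the canonical bundle trivialization used in the complex Gauduchon theorem and allows one to define a well-defined global integration pairing on $(2n,0)$-forms.

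First I would set up the pairing: given any $(2n,0)$-form $\Psi$, write $\Psi = f\,\Phi$ for a smooth function $f$ (possible since $\Phi$ trivializes $\Omega^{2n,0}_I(M)$), and define $\int_M \Psi$ essentially as $\int_M f\,\Phi\wedge\overline{\Phi}$ up to the appropriate top-degree volume normalization coming from $J\bar\Phi=\Phi$. Using $\partial\Phi=\partial_J\Phi=0$ together with the anti-commutation $\partial\partial_J+\partial_J\partial=0$, this pairing should satisfy a Stokes-type vanishing: $\int_M \partial\partial_J\eta = 0$ for every $\eta\in\Lambda^{2n-2,0}_I(M)$, because $\partial\partial_J\eta\wedge\overline{\Phi}$ is $d$-exact once the holomorphicity of $\Phi$ is used to absorb the differentials. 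This is the quaternionic analog of the fact that $\partial\bar\partial$-exact top forms integrate to zero on a complex manifold.

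Next I would derive the PDE. Setting $v=\mu^{n-1}$ and computing $\partial\partial_J\bigl(\mu^{n-1}\Omega^{n-1}\bigr)$, one extracts a second-order linear operator $L$ acting on $v$; contracting against $\Phi$ via the pairing above turns the equation $Lv=0$ into a scalar elliptic equation whose principal part is (a constant multiple of) the Laplacian of the chosen quaternionic Hermitian metric. The operator $L$ will generically have no zeroth-order term in self-adjoint form, or more precisely its formal adjoint $L^\ast$ will annihilate constants; by the Stokes-type identity its image is $L^2$-orthogonal to the constants, so by standard linear elliptic theory (Fredholm alternative for a second-order elliptic operator on a compact manifold) a positive solution exists and is unique up to scale precisely when the kernel of $L$ is one-dimensional, spanned by a nowhere-vanishing function. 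Positivity of the solution follows from the maximum principle applied to $L$, exactly as in Gauduchon's original argument.

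The main obstacle I expect is establishing that the relevant scalar operator is genuinely elliptic of Laplace type with the correct sign and that its index/kernel behaves as in the complex case, since $(\Lambda^{p,0}_I(M),\partial,\partial_J)$ is \emph{not} a double complex and the two differentials act on the same bidegree component. One must verify that the symmetry provided by $\Phi=J\bar\Phi$ and by the conjugation map $\bar J$ of Proposition~\ref{conjugation-symmetry} makes the $\partial_J$-contribution combine with the $\partial$-contribution into a self-adjoint elliptic operator rather than into a first-order or degenerate term; this is exactly the place where the $SL(n,\mathbb{H})$ holonomy hypothesis is indispensable, as it is what guarantees the existence of $\Phi$ and hence the validity of the Stokes-type integration that forces the solvability condition to hold automatically.
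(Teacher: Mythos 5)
The paper itself offers no proof of this proposition (it is quoted from~\cite{gra-lej-ver}), but your sketch reproduces essentially the argument of that reference: conformally rescale, reduce $\partial\partial_J\bigl(\mu^{n-1}\Omega^{n-1}\bigr)=0$ to a scalar second-order linear elliptic equation by trivializing $\Lambda^{2n,0}_I(M)$ against $\bar\Phi$, use $\partial\bar\Phi=\partial_J\bar\Phi=0$ to obtain the Stokes-type vanishing $\int_M\partial\partial_J\eta\wedge\bar\Phi=0$ (the same mechanism the present paper records in the Remark of Section~\ref{section6}), and conclude via the Fredholm alternative together with the maximum principle exactly as in Gauduchon's classical theorem. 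Your proposal is correct and takes essentially the same route as the cited proof.
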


This crucial result is a first reason why in the sequel we will always assume to be on an $SL(n,\mathbb{H})$-manifold. The second reason is that it is possible to build a version of Hodge theory on $SL(n,\mathbb{H})$-manifolds~\cite{_Verbitsky:HKT_} as shown next.
%In the remaining part of this section, we will present some useful properties of $SL(n,\mathbb{H})$-manifolds, see also~\cite{??,??}.\todo{mysterious sentence to completely change Patrick: you are right}\\

Let $(M,I,J,K,\Phi,\Omega)$ be a compact $SL(n,\Bbb H)$-manifold endowed with a quaternionic Hermitian metric $g$. We can suppose that $\Phi$ is positive (compared to $\Omega^n$)
and satisfies $\Phi=J\bar\Phi.$ Let $\{e_1,e_2=J\bar{e}_1,\cdots,{e}_{2n-1},e_{2n}=J\bar{e}_{2n-1}\}$ be a local quaternionic $h$-orthonormal basis of $\Lambda^{1,0}_I(M)$ (here $2\,h(X,Y)=g(X,\bar{Y})-\sqrt{-1}g(IX,\bar{Y})$, where $X,Y$ are sections of $T(M)\otimes\mathbb{C}$).
Locally we can write~\cite[Proposition~3.2]{_Verbitsky:holom} $$\Omega=\sum_{i=1}^n e_{2i-1}\wedge e_{2i}.$$
Similarly to~\cite[Section 9]{_Verbitsky:HKT_}, we define the Hodge-star operator  $$\ast_{\Phi} : \Lambda^{p,0}_I(M) \longrightarrow \Lambda^{2n-p,0}_I(M)$$ in the following way:
\begin{equation*}
\alpha \wedge (\ast_\Phi \beta)  \wedge \bar{\Phi} = h(\alpha, {\beta}) \,\frac{\Omega^n \wedge \bar{\Phi}}{n!},
\end{equation*}
%\todo{not renormalizing by $(n!)^2$ ? Mehdi: we have to be careful here. I think n! is the right thing since we have only $\Omega^n$ not $\Omega^n\wedge\bar\Omega^n$ we need to verify it by computations}
for $\alpha,\beta\in\Lambda^{p,0}_I(M)$ ($h$ is the induced metric on $\Lambda^{p,0}_I(M)$, in particular, $h(\Omega,\Omega)=n$ and $h(\Omega^n,\Omega^n)=(n!)^2$). 
%\todo{I am very bad at linear algebra calculations like this, can you show me how this is done? mehdi: you mean $h$ ? Patrick: yes}
On multi-vectors, the operator $\ast_{\Phi}$ acts as follows
$$\ast_{\Phi}\left(e_1\wedge\cdots\wedge e_p\right)=e_{p+1}\wedge\cdots\wedge e_{2n}.$$
%where $\{e_i\}$ is a local quaternionic positively oriented $g$-orthonormal basis of $\Lambda^{1,0}_I(M)$. 
One checks that $\left(\ast_\Phi\right)^2 \alpha=(-1)^p\,\alpha$
for $\alpha\in\Lambda^{p,0}_I(M)$.\\

The operator $\6^{\ast_{\Phi}}$ (resp.~$\6_J^{\ast_{\Phi}}$) is defined as the adjoint of $\6$ (resp.~$\6_J$) with respect to the (global) Hermitian inner product
\begin{eqnarray}\label{_usual_prod_}
\langle\alpha,\beta\rangle=\int_M h(\alpha,\beta)\,\frac{\Omega^n\wedge\bar{\Phi}}{n!}.
\end{eqnarray}
It is a straightforward computation to deduce that 
\begin{equation}\label{hodge-adjoints}
\6^{\ast_{\Phi}}=-\ast_{\Phi}\6\,\ast_{\Phi},\quad\6_J^{\ast_{\Phi}}=-\ast_{\Phi}\6_J\,\ast_{\Phi}.
\end{equation}
\begin{remark}
Here we used that $\int_M\6\phi=\int_M\6_J\phi=0$ for any $\phi\in\Lambda^{2n-1,2n}_I(M).$ The vanishing of $\int_M\6\phi$ follows from Stokes's theorem since $\6\phi=d\phi$,
where $d$ is the exterior derivative.
To prove $\int_M\6_J\phi=0$, we use the fact that $\Omega^n\wedge\bar\Omega^n$ is a volume form on $M$ and that $J\bar\Omega=\Omega$ (in particular, $J\left(\Omega^n\wedge\bar\Omega^n\right)=\Omega^n\wedge\bar\Omega^n$). Indeed,
$\int_M\6_J\phi=\int_Mg(\6_J\phi,\Omega^n\wedge\bar\Omega^n)\,\Omega^n\wedge\bar\Omega^n=\int_Mg(\bar\6J\phi,\Omega^n\wedge\bar\Omega^n)\,\Omega^n\wedge\bar\Omega^n=\int_M\bar\6J\phi=0$
since $\bar\6J\phi=dJ\phi.$
\end{remark}

If the dimension of the manifold is $4n=8$, then the bundle $\Omega^{2,0}_I(M)$ admits an ($h$-orthogonal) decomposition
\begin{equation}\label{dual_decomposition}
\Omega^{2,0}_I(M)=\Omega^{{\Phi},+}(M)\oplus\Omega^{{\Phi},-}(M),
\end{equation}
where $\Omega^{{\Phi},+}(M)$ (resp.~$\Omega^{{\Phi},-}(M)$) is the bundle of $\ast_{\Phi}$-self-dual forms $\phi\in\Lambda^{2,0}_I(M)$ (resp.~$\ast_{\Phi}$-anti-self-dual forms).
A section $\varphi$ of $\Omega^{{\Phi},+}(M)$ satisfies $\ast_{\Phi}\phi=\phi$ and is called \textsl{$\ast_{\Phi}$-self-dual} whereas a section $\varphi$ of $\Omega^{{\Phi},-}(M)$ satisfies $\ast_{\Phi}\phi=-\phi$ and is called \textsl{$\ast_{\Phi}$-anti-self-dual}. We deduce from~\eqref{hodge-adjoints} that the Hodge star operator $\ast_{\Phi}$ commutes with
the $\6$-Laplacian $\Delta_{\6}=\6\6^{\ast_{\Phi}}+\6^{\ast_{\Phi}}\6.$ Hence the decomposition~\eqref{dual_decomposition} descends to cohomology and the space $H_{\6}^{2,0}$ can be written as a direct sum of $\6$-closed $\ast_{\Phi}$-self-dual and $\6$-closed $\ast_{\Phi}$-anti-self-dual forms:
\begin{equation*}
H^{2,0}_\partial = H^{{\Phi},+}_\partial \oplus H^{{\Phi},-}_\partial.
\end{equation*}
Note that, more generally, on $SL(n,\mathbb{H})$-manifolds with $n$ even, a similar decomposition can be done for the space $H^{n,0}_\partial$.

Finally, when $(M,I,J,K,\Phi)$ is a compact $SL(n,\mathbb{H})$-manifold, the groups $H^{p,0}_{BC}$ and $H^{2n-p,0}_{AE}$ are dual. Indeed, consider the pairing~\cite{gra-lej-ver} on $H^{p,0}_{BC}\times H^{2n-p,0}_{AE}$ given by
\begin{equation}\label{pairing}
\left([\alpha],[\beta]\right)\longmapsto \int_M\alpha\wedge\beta\wedge\bar{\Phi}.
\end{equation}
Using the same pairing, one also deduces that 
\begin{equation*}
H_{\6}^{p,0}\simeq H_{\6}^{2n-p,0} \text{ and } H_{\6_J}^{p,0}\simeq H_{\6_J}^{2n-p,0}.
\end{equation*}
We will refer to these isomorphisms as \textsl{$SL(n,\mathbb{H})$-symmetry}.
%
%Moreover, the space of $\6$-closed $\ast_{\Phi}$-anti-self-dual forms is a subspace of $H_{\partial}^{\bar{J},+}$.\\

%%%%%%%%%%%%%%%%%%%%%%%%%%%%%%%%%%%%%%%%%%%%%%%%%%%%%%%%%%%%%%

\section{Pure and full hypercomplex structures}\label{section7}

%%%%%%%%%%%%%%%%%%%%%%%%%%%%%%%%%%%%%%%%%%%%%%%%%%%%%%%%%%%%%%

Besides the decomposition in self-dual and anti-self-dual forms, it is also possible to decompose $(2,0)$-forms in real and imaginary forms. Indeed, any $\phi\in{\Lambda}^{2,0}_I(M)$ can be written as $\phi=\phi^{\bar{J},+}+\phi^{\bar{J},-}$, where
$$\phi^{\bar{J},+}:=\frac{1}{2}\left(\phi+J\bar{\phi}\right) \quad \text{ and } \quad \phi^{\bar{J},-}:=\frac{1}{2}\left(\phi-J\bar{\phi}\right).$$ We get a decomposition of the bundle $\Omega^{2,0}_I(M)$:
\begin{equation}\label{real_decomposition}
\Omega^{2,0}_I(M)=\Omega^{\bar{J},+}(M)\oplus\Omega^{\bar{J},-}(M)
\end{equation}
A section $\phi$ of $\Omega^{\bar{J},+}(M)$ satisfies $\phi=J\bar{\phi}$ and is called \textsl{real} whereas a section $\varphi$ of $\Omega^{\bar{J},-}(M)$ satisfies $\phi=-J\bar{\phi}$ and is called \textsl{imaginary}.\\

%, the bundle of $\Omega^{2,0}_I(M)$ admits another ($h$-orthogonal) decomposition
%\begin{equation}\label{dual_decomposition}
%\Omega^{2,0}_I(M)=\Omega^{+_{\Phi}}(M)\oplus\Omega^{-_{\Phi}}(M),
%\end{equation}
%where $\Omega^{+_{\Phi}}(M)$ (resp. $\Omega^{-_{\Phi}}(M)$) is the bundle of $\ast_{\Phi}$-self-dual forms $\phi\in\Lambda^{2,0}_I(M)$ (resp. $\ast_{\Phi}$-anti-self-dual forms).
%In other words,  $\phi$ is $\ast_{\Phi}$-self-dual if $\ast_{\Phi}\phi=\phi$ and $\ast_{\Phi}$-anti-self-dual if $\ast_{\Phi}\phi=-\phi$. Then 
On compact $SL(2,\mathbb{H})$-manifolds, we have the following relation between the two decompositions~(\ref{dual_decomposition}) and~(\ref{real_decomposition}): 
\begin{lemma}
On a compact $SL(2,\Bbb H)$-manifold $(M,I,J,K,\Phi,\Omega)$ with quaternionic Hermitian metric we have
\begin{equation}\label{two_decompositions}
\Omega^{{\Phi},+}(M)=C^\infty(M) \wedge \Omega\oplus \Omega^{\bar{J},-}(M),\quad \Omega^{{\Phi},-}(M)=\Omega_0^{\bar{J},+}(M),
\end{equation}
where $\Omega_0^{\bar{J},+}(M)$ is the bundle of real forms pointwise $h$-orthogonal to $\Omega.$
\end{lemma}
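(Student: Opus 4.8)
The plan is to reduce the two bundle identities to pointwise linear algebra on a single fibre of $\Lambda^{2,0}_I(M)$, a complex vector space of dimension $\binom{2n}{2}=6$ since $n=2$. All six bundles occurring in the statement are defined fibrewise (the complement $\Omega_0^{\bar{J},+}(M)$ being the pointwise $h$-orthogonal complement of $\Omega$ inside the real forms), so it suffices to verify both equalities on one fibre. I would work in a local $h$-orthonormal coframe $\{e_1,e_2=J\bar{e}_1,e_3,e_4=J\bar{e}_3\}$ of $\Lambda^{1,0}_I(M)$, in which $\Omega=e_1\wedge e_2+e_3\wedge e_4$.

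First I would write down both structures explicitly on the basis $\{e_i\wedge e_j\}_{i<j}$. The real structure $\bar{J}$ is computed from $\bar{J}e_1=e_2$, $\bar{J}e_2=-e_1$ (and similarly on $e_3,e_4$) together with the multiplicativity $\bar{J}(\alpha\wedge\beta)=\bar{J}\alpha\wedge\bar{J}\beta$; the operator $\ast_\Phi$ is computed from its defining relation and the normalisation $\ast_\Phi(e_1\wedge e_2)=e_3\wedge e_4$, recalling that $\ast_\Phi$ is conjugate-linear (its defining relation is conjugate-linear in $\beta$). From these tables I would extract two structural facts: (a) $\ast_\Phi$ and $\bar{J}$ commute, because the data $h$, $\Omega$ and $\bar\Phi$ defining $\ast_\Phi$ are all $J$-compatible; and (b) $\Omega$ is $\bar{J}$-real and $\ast_\Phi$-self-dual, while $\sqrt{-1}\,\Omega$ is $\bar{J}$-imaginary and $\ast_\Phi$-anti-self-dual.

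By (a) the composite $\sigma:=\ast_\Phi\circ\bar{J}$ is a complex-linear involution, and the computation shows that its $(+1)$-eigenspace is the line $\mathbb{C}\,\Omega$ while its $(-1)$-eigenspace is the $h$-orthogonal complement $W$ of $\Omega$, of complex dimension $5$. On $W$ one has $\ast_\Phi=-\bar{J}$, so there a form is $\ast_\Phi$-self-dual precisely when it is imaginary and $\ast_\Phi$-anti-self-dual precisely when it is real; on $\mathbb{C}\,\Omega$ the self-dual part is $\mathbb{R}\,\Omega$ and the anti-self-dual part is $\sqrt{-1}\,\mathbb{R}\,\Omega$, by (b). Splitting $\Lambda^{2,0}_I=\mathbb{C}\,\Omega\oplus W$ and regrouping the four pieces then exhibits the self-dual bundle as the real multiples of $\Omega$ together with the imaginary forms $h$-orthogonal to $\Omega$, and the anti-self-dual bundle as the imaginary multiples of $\Omega$ together with the real forms $h$-orthogonal to $\Omega$. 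This is the content of \eqref{two_decompositions}, the scalar direction along $\Omega$ being recorded by the summand $C^\infty(M)\,\Omega$.

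The explicit $6\times 6$ bookkeeping is routine and I would not reproduce it. The delicate point is the behaviour along the distinguished scalar line through $\Omega$: since $\ast_\Phi$ is conjugate-linear, $\Omega$ and $\sqrt{-1}\,\Omega$ fall on opposite sides of the self-dual/anti-self-dual splitting, so this one scalar direction must be separated off from $W=\Omega^\perp$ by hand—this is exactly the rôle of the factor $C^\infty(M)\,\Omega$—and one must check that $W$ harbours no further self-dual real or anti-self-dual imaginary directions. I expect the main technical obstacle to be establishing the commutation $[\ast_\Phi,\bar{J}]=0$ cleanly, keeping track of the signs produced by the antilinearity of both operators; once that is in place the statement follows immediately from the eigenspace decomposition of $\sigma$.
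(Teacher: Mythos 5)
Your argument is correct, and its computational core is the same as the paper's: the paper also reduces to a pointwise computation in a local quaternionic $h$-orthonormal coframe with $\Omega=e_1\wedge e_2+e_3\wedge e_4$, lists the real and imaginary basis $2$-forms, and concludes ``from the definition of $\ast_\Phi$''. What you do differently is to organise the bookkeeping through the complex-linear involution $\sigma=\ast_\Phi\circ\bar{J}$ and its eigenspace decomposition $\Lambda^{2,0}_I=\mathbb{C}\,\Omega\oplus W$, $W=\Omega^{\perp_h}$, rather than checking six basis forms one by one. This buys two things. First, the commutation $[\ast_\Phi,\bar{J}]=0$, which you flag as the main obstacle, is in fact a two-line consequence of the defining relation of $\ast_\Phi$: test $\bar{J}\beta$ against forms $\bar{J}\gamma$ and use that $\bar{J}$ is wedge-multiplicative, that $\bar{J}\Omega=\Omega$ and $\bar{J}\Phi=\Phi$, and that $h(\bar{J}\alpha,\bar{J}\beta)=\overline{h(\alpha,\beta)}$; alternatively, since both operators are conjugate-linear, it suffices to verify it on the six basis forms, which is exactly the computation the paper performs anyway. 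Second, and more importantly, your route forces one to track the conjugate-linearity of $\ast_\Phi$ (which indeed follows from the defining relation, $h$ being conjugate-linear in its second slot), and this is precisely the point the printed lemma blurs.

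Indeed, your final decomposition
\begin{equation*}
\Omega^{\Phi,+}(M)=C^\infty(M,\mathbb{R})\,\Omega\oplus\Omega_0^{\bar{J},-}(M),\qquad
\Omega^{\Phi,-}(M)=C^\infty(M,\mathbb{R})\,\sqrt{-1}\,\Omega\oplus\Omega_0^{\bar{J},+}(M),
\end{equation*}
where $\Omega_0^{\bar{J},-}(M)$ denotes the imaginary forms pointwise $h$-orthogonal to $\Omega$, is the precise statement, and it does \emph{not} literally coincide with \eqref{two_decompositions} as printed: $\sqrt{-1}\,\Omega$ lies in $\Omega^{\bar{J},-}(M)$ but is $\ast_\Phi$-anti-self-dual, so the full bundle $\Omega^{\bar{J},-}(M)$ cannot appear in $\Omega^{\Phi,+}(M)$; and the second identity of the lemma equates a bundle of real rank $6$ with one of real rank $5$, the missing piece being exactly the line $\sqrt{-1}\,\mathbb{R}\,\Omega$ that your analysis places in $\Omega^{\Phi,-}(M)$. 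So your ``separating off the scalar line by hand'' is not a stylistic choice but a correction of the lemma's bookkeeping; the paper's own proof shows the same looseness, since its lists of four real and two imaginary forms ``over real-valued functions'' form a $\mathbb{C}$-basis of $\Lambda^{2,0}_I(M)$ but not real bases of $\Omega^{\bar{J},\pm}(M)$, each of which has real rank $6$. Your version is the one that should be (and implicitly is) used in the sequel, e.g.\ in the proofs of Theorems~\ref{pure-and-full} and~\ref{froelicher}.
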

\begin{proof}
Let $\{e_1,J\bar{e}_1,{e}_{2},J\bar{e}_{2}\}$ be a local quaternionic $h$-orthonormal basis of $\Lambda^{1,0}_I(M)$. We write $\Omega=e_1\wedge J\bar{e}_1+e_2\wedge  J\bar{e}_2.$ Then a $h$-orthogonal local basis (over real-valued functions) of real forms is given by $\{\Omega,e_1\wedge J\bar{e}_1-e_2\wedge  J\bar{e}_2, e_1\wedge {e}_2+J\bar{e}_1\wedge  J\bar{e}_2,e_1\wedge J\bar{e}_2+e_2\wedge  J\bar{e}_1\}$
while a $h$-orthogonal local basis (over real-valued functions) of imaginary forms is given by $\{e_1\wedge {e}_2-J\bar{e}_1\wedge  J\bar{e}_2,e_1\wedge J\bar{e}_2-e_2\wedge  J\bar{e}_1\}$.
The Lemma follows from the definition of $\ast_\Phi.$
\end{proof}
%\begin{proof}
%To do. \todo{this is crucially missing: Mehdi: easy Patrick: ok I will check it}
%\end{proof}
%
%Since $\ast_{\Phi}$ commutes with
%the $\6$-Laplacian $\Delta_{\6}=\6\6^{\ast_{\Phi}}+\6^{\ast_{\Phi}}\6,$
%the space $H_{\6}^{2,0}$ is a direct sum of $\6$-closed $\ast_{\Phi}$-anti-self-dual forms and $\6$-closed $\ast_{\Phi}$-self-dual forms:
%\begin{equation*}
%H^{2,0}_\partial = H^{2,0}_{+_{\Phi}} \oplus H^{2,0}_{-_{\Phi}}.
%\end{equation*}
%The space of $\6$-closed $\ast_{\Phi}$-anti-self-dual forms is a subspace of $H_{\partial}^{\bar{J},+}$.\\ 

We saw that, on compact $SL(2,\mathbb{H})$-manifolds, the decomposition of $(2,0)$-forms in self-dual and anti-self-dual forms passes to cohomology. In the remaining part of this Section, we will show that, on compact $SL(2,\mathbb{H})$-manifolds, the decomposition in real and imaginary forms descends to a splitting in cohomology as well.\\

For any compact hypercomplex manifold we may define the following two subgroups of $H_{\6}^{2,0}$: the real (or $\bar{J}$-invariant) subgroup
$$H_{\partial}^{\bar{J},+} := \left\{ \mathfrak{a} \in H_{\6}^{2,0}\, |\, \exists\,\varphi \in  \mathfrak{a} \text{ such that } \6 \varphi=0 \text{ and }\varphi=J\bar{\varphi}\right\}$$
and the imaginary (or $\bar{J}$-anti-invariant) subgroup
$$H_{\partial}^{\bar{J},-} := \left\{  \mathfrak{a} \in H_{\6}^{2,0}\, |\, \exists\,\varphi \in  \mathfrak{a} \text{ such that } \6 \varphi=0 \text{ and }\varphi=-J\bar{\varphi}\right\}.$$
Note that the subgroups $H_{\partial}^{\bar{J},+}$ and $H_{\partial}^{\bar{J},-}$ of $H^{2,0}_{\partial}$ are analogous to the groups $H^+_J$ and~$H^-_J$ introduced by T.-J.~Li and W.~Zhang~\cite{li-zha} in the almost-complex setting as subgroups of the second de Rham cohomology group $H^2_{dR}.$

\begin{definition}
A hypercomplex structure $(I,J,K)$ on a manifold $M$ is said to be
\begin{itemize}
\item{$C^\infty$-pure if $$H_{\partial}^{\bar{J},-} \cap H_{\partial}^{\bar{J},+} = \{0\},$$}
\item{$C^\infty$-full if $$H_{\partial}^{\bar{J},-} + H_{\partial}^{\bar{J},+} = H^{2,0}_{\partial},$$}
\item{$C^{\infty}$-pure-and-full if %it is both $C^{\infty}$-pure and $C^{\infty}$-full, i.e.~if 
$$H_{\partial}^{\bar{J},-} \oplus H_{\partial}^{\bar{J},+} = H^{2,0}_{\partial}.$$}
\end{itemize}
\end{definition}

Imitating the proof of~\cite[Theorem 2.3]{dra-li-zha}, one can then prove the following analog of T.~Draghici, T.-J.~Li and W.~Zhang's result:

\begin{theorem}\label{pure-and-full}
Every hypercomplex structure on a compact $SL(2,\mathbb{H})$-manifold is $C^{\infty}$-pure-and-full.
\end{theorem}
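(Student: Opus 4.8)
The plan is to fix a quaternionic Hermitian metric and run the $SL(2,\mathbb{H})$-Hodge theory of Section~\ref{section6}, translating the self-dual/anti-self-dual splitting $H^{2,0}_{\partial}=H^{{\Phi},+}_{\partial}\oplus H^{{\Phi},-}_{\partial}$ into the real/imaginary splitting through the dictionary~\eqref{two_decompositions}. Throughout I will use three elementary facts: by~\eqref{two_decompositions} every $\ast_{\Phi}$-anti-self-dual $(2,0)$-form is real and every imaginary $(2,0)$-form is $\ast_{\Phi}$-self-dual; by~\eqref{hodge-adjoints} a $\partial$-closed $\ast_{\Phi}$-self-dual form $\psi$ is automatically $\partial$-harmonic, since $\partial^{\ast_{\Phi}}\psi=-\ast_{\Phi}\partial\ast_{\Phi}\psi=-\ast_{\Phi}\partial\psi=0$; and the real and imaginary subbundles $\Omega^{\bar{J},+}(M)$ and $\Omega^{\bar{J},-}(M)$ are pointwise $h$-orthogonal. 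I also record that the involution $\bar{J}$ of Proposition~\ref{conjugation-symmetry}, viewed on $\Lambda^{2,0}_I(M)$, squares to the identity, commutes with $\ast_{\Phi}$ (clear from~\eqref{two_decompositions}, since $\bar{J}$ fixes $C^\infty(M)\wedge\Omega\oplus\Omega^{\bar{J},+}_0(M)$ and negates $\Omega^{\bar{J},-}(M)$, each of which lies in a single $\ast_{\Phi}$-eigenbundle), and intertwines $\partial$ with $\partial_J$ rather than commuting with either.

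For $C^\infty$-purity I would argue by orthogonality. Suppose $\mathfrak{a}\in H_{\partial}^{\bar{J},+}\cap H_{\partial}^{\bar{J},-}$, so that $\mathfrak{a}$ has a real $\partial$-closed representative $\rho$ and an imaginary $\partial$-closed representative $\sigma$. Being imaginary, $\sigma$ is $\ast_{\Phi}$-self-dual, hence $\partial$-harmonic, and therefore is the unique $\partial$-harmonic representative of $\mathfrak{a}$. Writing $\rho=\sigma+\partial\tau$ and pairing with $\sigma$ gives $\langle\rho,\sigma\rangle=\|\sigma\|^2+\langle\tau,\partial^{\ast_{\Phi}}\sigma\rangle=\|\sigma\|^2$. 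On the other hand $\rho$ is real and $\sigma$ imaginary, so $h(\rho,\sigma)=0$ pointwise and $\langle\rho,\sigma\rangle=0$. Hence $\sigma=0$ and $\mathfrak{a}=0$.

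For $C^\infty$-fullness I would take $\mathfrak{a}\in H^{2,0}_{\partial}$ with $\partial$-harmonic representative $\varphi$. Since $\ast_{\Phi}$ commutes with $\Delta_{\partial}$, the splitting $\varphi=\varphi^{+}+\varphi^{-}$ into $\ast_{\Phi}$-self-dual and anti-self-dual parts consists of two $\partial$-harmonic, hence $\partial$-closed, forms. The anti-self-dual summand $\varphi^{-}$ is real by~\eqref{two_decompositions}, so $[\varphi^{-}]\in H_{\partial}^{\bar{J},+}$. Writing the self-dual summand as $\varphi^{+}=f\Omega+\psi$ with $f\in C^\infty(M)$ and $\psi$ imaginary as in~\eqref{two_decompositions} — which is exactly its real/imaginary decomposition — it suffices to show that $f\Omega$ and $\psi$ are each $\partial$-closed: then $[\varphi^{+}]=[f\Omega]+[\psi]$ with $[f\Omega]\in H_{\partial}^{\bar{J},+}$ and $[\psi]\in H_{\partial}^{\bar{J},-}$, and fullness follows.

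The hard part is precisely this last step, and it is where the $SL(2,\mathbb{H})$ hypothesis is indispensable. The difficulty is structural: because $\bar{J}$ intertwines $\partial$ and $\partial_J$, it maps the $\partial$-harmonic space into the $\partial_J$-harmonic space, and a priori $\Delta_{\partial}$ may mix the real summand $f\Omega$ with the imaginary summand $\psi$ inside the self-dual bundle. In the complex-surface situation of~\cite{dra-li-zha} this mixing cannot occur because the target $\Lambda^{3,0}$ of the relevant differential vanishes in that dimension; here $\Lambda^{3,0}_I(M)\neq 0$, so that cheap vanishing is unavailable and must be replaced. The plan is to use the $SL(2,\mathbb{H})$-symmetry of the Hodge theory of Section~\ref{section6} to show that $\bar{J}$ commutes with $\Delta_{\partial}$ on $(2,0)$-forms, equivalently that $\Delta_{\partial}$ and $\Delta_{\partial_J}$ coincide there. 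Granting this, $\Delta_{\partial}$ preserves the $(\pm 1)$-eigenbundle decomposition of $\bar{J}$, i.e.\ the real/imaginary decomposition~\eqref{real_decomposition}; hence the real part $f\Omega$ and imaginary part $\psi$ of the $\partial$-harmonic form $\varphi^{+}$ are themselves $\partial$-harmonic, in particular $\partial$-closed, completing the argument. Verifying that the holonomy constraint does force $\bar{J}$ to commute with $\Delta_{\partial}$ is the crux. A useful consistency check along the way is the pointwise identity $\alpha\wedge\beta\wedge\bar{\Phi}=0$ for $\alpha$ real and $\beta$ imaginary, the quaternionic replacement for the type reason $\Lambda^{1,1}\wedge\Lambda^{(2,0)+(0,2)}=0$ exploited in the complex-surface proof; it shows that $H_{\partial}^{\bar{J},+}$ and $H_{\partial}^{\bar{J},-}$ are orthogonal for the pairing~\eqref{pairing}, as they must be if they are to be complementary.
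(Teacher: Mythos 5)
Your overall architecture (run the $\ast_\Phi$-Hodge theory, split the harmonic representative into self-dual and anti-self-dual parts, feed the anti-self-dual part into $H^{\bar J,+}_\partial$, and reduce to the self-dual block) is exactly the Draghici--Li--Zhang template that the paper invokes --- its entire ``proof'' is the sentence that one imitates \cite[Theorem 2.3]{dra-li-zha}. But your fullness half has a genuine gap, which you flag yourself: everything is reduced to the claim that $\bar J$ commutes with $\Delta_\partial$ on $\Lambda^{2,0}_I(M)$, equivalently $\Delta_\partial=\Delta_{\partial_J}$ there, and this is never proved. It is not a routine verification: since $\bar J$ is an anti-isometry intertwining $\partial$ and $\partial_J$, it conjugates $\Delta_\partial$ into $\Delta_{\partial_J}$, so your claim is a K\"ahler-type identity between two genuinely different Laplacians for an \emph{arbitrary} quaternionic Hermitian metric, and you offer no mechanism for it. Worse, nothing in your reduction actually uses $n=2$, yet the conclusion fails for $n=3$: in Example 3 of Section~\ref{section11}, $\varphi^{15}$ is $\partial$-closed and (for the standard invariant metric) $\partial$-coclosed, while its real part $\tfrac12(\varphi^{15}+\varphi^{36})$ is not $\partial$-closed because $\partial\varphi^{36}=-\tfrac12\,\varphi^{3}\wedge\varphi^{14}\neq 0$; so $\Delta_\partial$ does not preserve the real/imaginary splitting in general, and any proof of your crux must exploit dimension $8$ essentially. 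This is also not how the model proof works: in \cite{dra-li-zha} the Laplacian does \emph{not} commute with the involution either; the self-dual harmonic block is handled by splitting off the finite-dimensional space $\mathcal{Z}^-$ of closed anti-invariant (here: closed imaginary) forms and producing an invariant (here: real) representative for every harmonic form $L^2$-orthogonal to $\mathcal{Z}^-$, by solving $(\partial\gamma)^{\bar J,-}=-\psi$ --- solvable because, using \eqref{hodge-adjoints} as in the elliptic step of Section~\ref{section10}, the cokernel of $\gamma\mapsto(\partial\gamma)^{\bar J,-}$ is exactly $\mathcal{Z}^-$. Your plan replaces this work by a stronger, unproved, and dubious statement.

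The purity half also needs repair, because your pointwise claims fail over $\mathbb{C}$. Multiplication by $\sqrt{-1}$ exchanges real and imaginary forms while $\ast_\Phi$ is $\mathbb{C}$-linear, so the subgroups $H^{\bar J,\pm}_\partial$ are only \emph{real} subspaces with $H^{\bar J,-}_\partial=\sqrt{-1}\,H^{\bar J,+}_\partial$, and the dictionary of \eqref{two_decompositions} holds only for real-coefficient spans: $\sqrt{-1}\,\bigl(e_1\wedge J\bar e_1-e_2\wedge J\bar e_2\bigr)$ is imaginary yet $\ast_\Phi$-anti-self-dual, so ``imaginary $\Rightarrow$ self-dual'' is false and your $\sigma$ need not be $\partial$-harmonic. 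Likewise $\sqrt{-1}\,\Omega$ is imaginary with $h(\Omega,\sqrt{-1}\,\Omega)\neq 0$ and $\Omega\wedge(\sqrt{-1}\,\Omega)\wedge\bar\Phi=\sqrt{-1}\,\Omega^2\wedge\bar\Phi\neq 0$, so neither your pointwise $h$-orthogonality of $\Omega^{\bar J,+}(M)$ and $\Omega^{\bar J,-}(M)$ nor your ``consistency check'' $\alpha\wedge\beta\wedge\bar\Phi=0$ holds; only their real parts vanish. The positivity-plus-Stokes idea is the right one, but it must be run with the real structure (real parts of the inner product and of the pairing \eqref{pairing}, after controlling the anti-self-dual component of the imaginary representative), not with the complexified bundles as written.
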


\begin{remark}
There are higher-dimensional $SL(n,\mathbb{H})$-manifolds with $n>2$ whose hypercomplex structures are not $C^{\infty}$-pure-and-full: see Example 3 in Section~\ref{section11}.
\end{remark}

\section{Fr\"{o}licher degeneracy on $SL(2,\mathbb{H})$ manifolds}\label{froelicherSL}

%%%%%%%%%%%%%%%%%%%%%%%%%%%%%%%%%%%%%%%%%%%%%%%%%%%%%%%%%%%%%%

In this Section, we show that the quaternionic Fr\"{o}licher spectral sequence always degenerates at the first page on compact $SL(2,\mathbb{H})$-manifolds: 
\begin{theorem}\label{froelicher}
On compact $SL(2,\mathbb{H})$-manifolds, the quaternionic Fr\"{o}licher spectral sequence degenerates at the first page.
\end{theorem}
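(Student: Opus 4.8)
The plan is to prove degeneration by showing that each connecting homomorphism $\partial_J\colon H^{p,0}_\partial\to H^{p+1,0}_\partial$ is the zero map; as recorded in Section~\ref{section3}, the simultaneous vanishing of all these maps forces $E_1^{p,0}\simeq E_2^{p,0}$ for every $p$, which is precisely degeneration at the first page. Since $2n=4$, only the four maps at $p=0,1,2,3$ occur. The first reduction I would make is to record that the $SL(2,\mathbb{H})$-symmetry pairing~\eqref{pairing} turns the map at level $p$ into the transpose of the map at level $3-p$. Indeed, for $\partial$-closed $\mu\in\Lambda^{p,0}_I(M)$ and $\nu\in\Lambda^{3-p,0}_I(M)$, the form $\mu\wedge\nu\wedge\bar{\Phi}$ has bidegree $(2n-1,2n)$, so $\int_M\partial_J(\mu\wedge\nu\wedge\bar{\Phi})=0$, and since $\partial_J\bar{\Phi}=0$ this gives
\begin{equation*}
\int_M\partial_J\mu\wedge\nu\wedge\bar{\Phi}=-(-1)^p\int_M\mu\wedge\partial_J\nu\wedge\bar{\Phi}.
\end{equation*}
As the pairing is non-degenerate, the map at $p$ vanishes if and only if the map at $3-p$ does. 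Hence it is enough to treat $p=0$ and $p=1$.

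The case $p=0$ is immediate: a $\partial$-closed function is anti-holomorphic with respect to $I$, hence locally constant on the compact manifold by the maximum principle, and $\partial_J$ annihilates locally constant functions. Thus $\partial_J\colon H^{0,0}_\partial\to H^{1,0}_\partial$ is zero, and by the transpose relation so is $\partial_J\colon H^{3,0}_\partial\to H^{4,0}_\partial$.

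The heart of the argument — and the step I expect to be the main obstacle — is the vanishing of $\partial_J\colon H^{1,0}_\partial\to H^{2,0}_\partial$, since this is where the special geometry of $SL(2,\mathbb{H})$-manifolds must intervene. Here I would combine the fact that $\bar{J}$ intertwines the two differentials, $\bar{J}\partial_J=\partial\bar{J}$ (Proposition~\ref{conjugation-symmetry}), with $C^\infty$-purity (Theorem~\ref{pure-and-full}). Given a $\partial$-closed $(1,0)$-form $\alpha$, set $\psi:=\partial_J\alpha$, a $\partial$-closed $(2,0)$-form; then
\begin{equation*}
\bar{J}\psi=\bar{J}\partial_J\alpha=\partial\bar{J}\alpha
\end{equation*}
is $\partial$-exact. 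Because $\bar{J}^2=\mathrm{id}$ on $(2,0)$-forms, the real and imaginary parts $\psi^{\pm}:=\tfrac12(\psi\pm\bar{J}\psi)$ are again $\partial$-closed and satisfy $[\psi^{+}]=[\psi^{-}]=\tfrac12[\psi]$ in $H^{2,0}_\partial$, the exact correction $\bar{J}\psi$ dropping out. Since $\psi^{+}$ is real and $\psi^{-}$ is imaginary, the class $\tfrac12[\psi]$ lies simultaneously in $H^{\bar{J},+}_\partial$ and in $H^{\bar{J},-}_\partial$; by $C^\infty$-purity their intersection is trivial, so $[\partial_J\alpha]=[\psi]=0$. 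This proves $\partial_J\colon H^{1,0}_\partial\to H^{2,0}_\partial$ is zero, and with it its transpose $\partial_J\colon H^{2,0}_\partial\to H^{3,0}_\partial$.

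Collecting the four vanishings, all connecting maps $\partial_J\colon H^{p,0}_\partial\to H^{p+1,0}_\partial$ are zero, hence $E_1^{p,0}\simeq E_2^{p,0}$ for every $p$ and the quaternionic Fr\"olicher spectral sequence degenerates at the first page. The only genuinely nontrivial input is $C^\infty$-purity in the $p=1$ step; the remaining cases are duality bookkeeping from the $SL(2,\mathbb{H})$-pairing together with the triviality of holomorphic functions. I would emphasise that purity is exactly the phenomenon attached to real dimension~$8$, where the relevant target is the middle degree $p=2=n$ carrying the real/imaginary splitting, which is why the statement is formulated for $SL(2,\mathbb{H})$ rather than in arbitrary dimension.
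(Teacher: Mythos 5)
Your proof is correct, but it routes around the paper's argument in a genuinely different way for the two middle maps, so a comparison is in order. The paper kills the four maps $\partial_J\colon H^{p,0}_\partial\to H^{p+1,0}_\partial$ one by one: $p=0$ by constancy of holomorphic functions (as you do); $p=1$ by a metric argument, decomposing $H^{2,0}_\partial$ into $\ast_\Phi$-self-dual and anti-self-dual parts and showing via $\|\partial_J\alpha\|_h^2=\pm\int_M\partial_J\alpha\wedge\partial_J\alpha\wedge\bar\Phi=0$ that a $\partial_J$-exact (anti-)self-dual form vanishes; $p=2$ by $C^\infty$-pure-and-fullness, replacing the representative $\alpha$ of the source class by a real or imaginary $\partial$-closed form $\beta$ and computing $\partial_J\alpha=-\partial(\partial_J\gamma)$; and $p=3$ by pairing against constants. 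You instead promote the paper's $p=3$ trick to a systematic transpose symmetry — your integration-by-parts identity is sound, since $\mu\wedge\nu\wedge\bar\Phi\in\Lambda^{2n-1,2n}_I(M)$ is exactly the setting of the paper's Remark in Section~\ref{section6} and $\partial_J\bar\Phi=0$ — which halves the work to $p=0,1$, and then you handle $p=1$ with the intertwining $\bar J\partial_J=\partial\bar J$ (which indeed follows from $\partial_J=J^{-1}\bar\partial J$ and the reality of $J$) plus $C^\infty$-purity: since $\bar J\psi=\partial(\bar J\alpha)$ is exact, both halves $\psi^{\pm}$ are $\partial$-closed representatives of $\tfrac12[\psi]$, placing that class in $H^{\bar J,+}_\partial\cap H^{\bar J,-}_\partial=\{0\}$. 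What your route buys: you need only purity, not fullness, from Theorem~\ref{pure-and-full}; you dispense entirely with the $\ast_\Phi$-(anti-)self-dual decomposition of $H^{2,0}_\partial$; and your explicit splitting $\psi=\psi^++\psi^-$ with $[\psi^+]=[\psi^-]=\tfrac12[\psi]$ is actually tighter than the paper's corresponding step at $p=2$, which loosely asserts that a class lies "either in $H^{\bar J,+}_\partial$ or in $H^{\bar J,-}_\partial$" when it should decompose it as a sum. Two caveats keep your argument from being more elementary than it looks: the transpose reduction rests on the non-degeneracy of the pairing~\eqref{pairing} on $H^{p,0}_\partial\times H^{2n-p,0}_\partial$ (the $SL(n,\mathbb{H})$-symmetry), which the paper establishes through the Hodge theory of Section~\ref{section6}, so the metric machinery you avoid at $p=1$ re-enters there; and, as you rightly stress, purity of Theorem~\ref{pure-and-full} is the one input genuinely tied to $n=2$, consistent with Example~3 of Section~\ref{section11} failing to be $C^\infty$-pure in dimension~12.
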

This should be compared to the following well-known result:
\begin{theorem}\cite{kodaira}
On compact complex surfaces, the Fr\"{o}licher spectral sequence degenerates at the first page.
\end{theorem}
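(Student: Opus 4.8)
The plan is to run the Fr\"olicher (Hodge--de Rham) spectral sequence $\{E_r^{p,q}\}$ of the compact complex surface $M$, whose first page is $E_1^{p,q}=H^{p,q}_{\bar\partial}(M)$ and which converges to $H^{p+q}_{dR}(M;\mathbb{C})$. Degeneration at the first page is equivalent to the numerical equalities $\sum_{p+q=k}h^{p,q}=b_k$ for every $k$, i.e.\ to equality throughout the Fr\"olicher inequality $b_k\leqslant\sum_{p+q=k}h^{p,q}$: indeed $\sum_{p+q=k}\dim E_\infty^{p,q}=b_k$ always holds, and since $\dim E_r^{p,q}$ is nonincreasing in $r$, each $\delta_k:=\sum_{p+q=k}h^{p,q}-b_k\geqslant 0$ vanishes if and only if $E_1^{p,q}=E_\infty^{p,q}$ for all $p+q=k$. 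So the goal is to prove $\delta_k=0$ for all $k$.

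First I would reduce everything to a single identity. Serre duality gives $h^{p,q}=h^{2-p,2-q}$ and Poincar\'e duality gives $b_k=b_{4-k}$, hence $\delta_k=\delta_{4-k}$; in particular $\delta_3=\delta_1$ and $\delta_4=\delta_0$. Since $M$ is compact and connected, $h^{0,0}=b_0=1$, so $\delta_0=\delta_4=0$. Because the Euler characteristic is preserved when passing from each page to its cohomology, $\sum_k(-1)^k\sum_{p+q=k}h^{p,q}=\sum_k(-1)^kb_k$, that is $\sum_k(-1)^k\delta_k=0$; together with the symmetries this forces $\delta_2=2\delta_1$. Thus all $\delta_k$ vanish as soon as $\delta_1=0$, and the theorem collapses to the single equality $b_1=h^{1,0}+h^{0,1}$.

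To attack $\delta_1$ I would first record the surface-specific fact that every holomorphic $1$-form is $d$-closed. If $\alpha\in\Lambda^{1,0}$ satisfies $\bar\partial\alpha=0$, then $d\alpha=\partial\alpha=:\beta\in\Lambda^{2,0}$ and $\bar\beta=d\bar\alpha$, so $\beta\wedge\bar\beta=d(\alpha\wedge d\bar\alpha)$ and Stokes' theorem yields $\int_M\beta\wedge\bar\beta=0$; since $\beta\wedge\bar\beta$ is a positive multiple of the pointwise norm of $\beta$ times the volume form, this forces $\beta=\partial\alpha=0$. Consequently, sending a holomorphic $1$-form to its de Rham class and a closed $1$-form to the $\bar\partial$-class of its $(0,1)$-part yields a complex $0\to H^{1,0}_{\bar\partial}\xrightarrow{j}H^1_{dR}(M;\mathbb{C})\xrightarrow{r}H^{0,1}_{\bar\partial}$, which I claim is exact. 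The map $j$ is injective because a holomorphic exact $1$-form $\alpha=df$ is of type $(1,0)$, so $\bar\partial f=(df)^{0,1}=0$, whence $f$ is holomorphic, hence constant, and $\alpha=0$. And $\ker r=\operatorname{Im}j$ because a closed $1$-form $\omega$ with $\omega^{0,1}=\bar\partial g$ can be replaced by the cohomologous $\omega-dg$, which then has pure type $(1,0)$ and, being closed, is holomorphic. Exactness re-proves $b_1\leqslant h^{1,0}+h^{0,1}$, with equality if and only if $r$ is surjective.

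The hard part, and the genuine content of Kodaira's theorem, is exactly the surjectivity of $r$, equivalently $\delta_1=0$: every $\bar\partial$-closed $(0,1)$-form $\eta$ must, up to $\bar\partial$-exact terms, be the $(0,1)$-part of a $d$-closed $1$-form. Completing $\eta$ to a closed form amounts to finding $\zeta\in\Lambda^{1,0}$ with $\partial\zeta=0$ and $\bar\partial\zeta=-\partial\eta$, so the obstruction is the class $[\partial\eta]\in H^{1,1}_{\bar\partial}$, a $(1,1)$-form that is simultaneously $\partial$-exact and $\bar\partial$-closed. On a K\"ahler surface this class vanishes by the $\partial\bar\partial$-Lemma, but non-K\"ahler surfaces exist, so the vanishing cannot come from the $\partial\bar\partial$-Lemma and is the true surface-specific analytic input. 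I would establish it by fixing a Gauduchon metric and analysing $\bar\partial$-harmonic $(0,1)$-representatives, showing that for such $\eta$ the form $\partial\eta$ is in fact $\bar\partial$-exact; this is the exact analogue of the argument carried out for $SL(2,\mathbb{H})$-manifolds in Section~\ref{froelicherSL}. Once $\delta_1=0$ is secured, the reduction of the second paragraph upgrades it to $\delta_k=0$ for all $k$, and the Fr\"olicher spectral sequence degenerates at the first page.
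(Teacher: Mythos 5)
First, a framing remark: the paper does not prove this statement at all — it is quoted from \cite{kodaira} purely as the classical counterpart of Theorem~\ref{froelicher} — so your proposal must stand on its own. Most of it does: degeneration at $E_1$ is indeed equivalent to $\sum_{p+q=k}h^{p,q}=b_k$ for all $k$; Serre and Poincar\'e duality give $\delta_k=\delta_{4-k}$; the Euler-characteristic identity forces $\delta_2=2\delta_1$; your Stokes/positivity argument that holomorphic $1$-forms are closed is correct; and the sequence $0\to H^{1,0}_{\bar\partial}\to H^1_{dR}\to H^{0,1}_{\bar\partial}$ is exact as you claim, so everything correctly collapses to the surjectivity of $r$, i.e.\ $\delta_1=0$. (One glossed point: vanishing of $[\partial\eta]\in H^{1,1}_{\bar\partial}$ gives $\zeta$ with $\bar\partial\zeta=-\partial\eta$ but not a priori $\partial\zeta=0$; this is repairable, since $\partial\zeta$ is then a holomorphic $2$-form with $\int_M\partial\zeta\wedge\overline{\partial\zeta}=0$ by the same Stokes trick, hence zero — but it should be said.)

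The genuine gap is your final paragraph, which is where the entire content of Kodaira's theorem lives and where the proposal stops being a proof. The announced strategy — fix a Gauduchon metric $\omega_G$ and show $\partial\eta$ is $\bar\partial$-exact for $\bar\partial$-harmonic $\eta$ — carried out honestly yields only this: the degree map $[\eta]\mapsto\int_M\partial\eta\wedge\omega_G$ is well defined on $H^{0,1}_{\bar\partial}$ (because $\partial\bar\partial\omega_G=0$), and classes in its kernel are completable to closed $1$-forms. Since the degree map need not vanish identically a priori, this bounds the defect only by the codimension of that kernel, i.e.\ it gives $\delta_1\leqslant 1$, not $\delta_1=0$. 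Killing the last possible dimension is exactly the hard step: classically it requires index-theoretic input (Noether's formula together with the signature theorem, pinning $h^{0,1}-h^{1,0}\in\{0,1\}$ against the parity of $b_1$, as in \cite{kodaira}), or positivity arguments of Lamari--Teleman type \cite{lamari,tel,ang-dlo-tom}. Your appeal to ``the exact analogue of the argument of Section~\ref{froelicherSL}'' fails on two counts. First, that is not what Section~\ref{froelicherSL} does: the paper's proof of Theorem~\ref{froelicher} never touches Gauduchon metrics; it kills the four maps $H^{p,0}_\partial\xrightarrow{\partial_J}H^{p+1,0}_\partial$ one by one using compactness ($p=0$), the $\ast_\Phi$-(anti-)self-dual positivity trick ($p=1$), $C^\infty$-pure-and-fullness via Theorem~\ref{pure-and-full} ($p=2$), and duality against constants ($p=3$). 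Second, where the paper \emph{does} run a Gauduchon degree-map argument (Theorem~\ref{vanishing_degree}, via Theorem~\ref{exact_seq_AE}), the one-dimensional gap is closed by the evenness of $h^{1,0}_{BC}$ and $h^{1,0}_{AE}$, forced by the quaternionic structure $J$ — a symmetry with no counterpart on an arbitrary compact complex surface, so it cannot be transplanted. As written, your argument establishes $\delta_1\in\{0,1\}$ and $\delta_2=2\delta_1$, but not the theorem.
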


%In order to prove this Theorem, we first recall the following result which follows from the maximum principle for functions.

%\begin{lemma}\cite[Lemma 6.3]{gra-lej-ver}\label{first-b-vanish}
%Let $(M,I,J,K,\Omega)$ be a compact hypercomplex manifold of real dimension $4n$. Then $b^{1,0}=0$.
%\end{lemma}
%Remark that the proof of~\cite[Lemma 6.3]{gra-lej-ver} holds without the $SL(2,\mathbb{H})$ assumption and works in any real dimension $4n.$

\newpage
We prove now Theorem~\ref{froelicher}:

\begin{proof}
If we show that the maps
$$H^{0,0}_\partial \xlongrightarrow{\partial_J} H^{1,0}_\partial \xlongrightarrow{\partial_J} H^{2,0}_\partial \xlongrightarrow{\partial_J} H^{3,0}_\partial \xlongrightarrow{\partial_J} H^{4,0}_\partial$$
all vanish, then we get that 
$$E^{p,0}_1 \simeq E^{p,0}_2 \quad \text{ for all } 0 \leqslant p \leqslant 3.$$
%In other words, we want the following maps
%$$H^{0,0}_\partial \xlongrightarrow{\partial_J} H^{1,0}_\partial \xlongrightarrow{\partial_J} H^{2,0}_\partial \xlongrightarrow{\partial_J} H^{3,0}_\partial \xlongrightarrow{\partial_J} H^{4,0}_\partial$$
%to vanish. 
The first map vanishes because, if $\partial f =0$, then $\partial_J f=0$. Indeed, ${\bar{\6}}\bar{f}=0$ implies that $\bar{f}$ is holomorphic function on a compact manifold and hence $f$ is constant.
%by Lemma~\ref{first-b-vanish}. 
For the second map, take $\mathfrak{a}\in H^{2,0}_\partial$ represented by $\varphi$ such that $\varphi = \partial_J \alpha$ for some $\alpha$
representing a class in $H^{1,0}_\partial$. By the $h$-orthogonal direct sum decomposition of $H^{2,0}_{\partial}$, we know that $\mathfrak{a}$ lies either in $H^{{\Phi},+}_\partial$ or in~$H^{{\Phi},-}_\partial.$ However, any $\partial_J$-exact $\ast_\Phi$-self-dual or $\ast_\Phi$-anti-self-dual $(2,0)$-form is zero:
$$||\varphi||_h^2= ||\partial_J \alpha ||_h^2 = \int_M \partial_J \alpha \wedge \ast_{\Phi}(\partial_J \alpha) \wedge \bar{\Phi} = \pm \int_M \partial_J \alpha \wedge \partial_J \alpha \wedge \bar{\Phi} = 0.$$
%
%an element $\varphi \in \Lambda^{2,0}_I(M) \cap \ker \partial \cap \Ima \partial_J.$ Then $\varphi=\partial_J \alpha$ for some $\alpha \in \Lambda^{1,0}_I(M)$. By the $h$-orthogonal direct sum decomposition of $H^{2,0}_{\partial}$, $\partial_J \alpha$ lies either in $H^{{\Phi},+}_\partial$ or in $H^{{\Phi},-}_\partial$\todo{careful: a representative lies there not the chosen form itself}. However, any $\partial$-closed, $\partial_J$-exact $\ast_\Phi$-self-dual or $\ast_\Phi$-anti-self-dual $(2,0)$-form is zero:
%$$||\varphi||_h^2= ||\partial_J \alpha ||_h^2 = \int_M \partial_J \alpha \wedge \ast_{\Phi}(\partial_J \alpha) \wedge \bar{\Phi} = \pm \int_M \partial_J \alpha \wedge \partial_J \alpha \wedge \bar{\Phi} = 0.$$
For the third map, take $\mathfrak{a} \in H^{3,0}_\partial$ represented by~$\phi$ such that $\varphi= \partial_J \alpha$ for some $\alpha$ representing a class $\mathfrak{b}$ in
$H^{2,0}_\partial$. By Theorem~\ref{pure-and-full}, the hypercomplex structure of a compact $SL(2,\mathbb{H})$-manifold is $C^\infty$-pure-and-full.
We deduce that either $\mathfrak{b}\in H^{\bar{J},+}_\partial$ or $\mathfrak{b}\in H^{\bar{J},-}_\partial$. If $\mathfrak{b}\in H^{\bar{J},+}_\partial,$ then there exists some $\beta \in \Lambda^{2,0}_I(M) \cap \ker \partial$ such that $\beta = J\bar{\beta}$ and $\alpha = \beta + \partial \gamma$ for some $\gamma \in \Lambda^{1,0}_I(M)$. We get
\begin{eqnarray*}
\varphi &=& \partial_J \alpha = \partial_J \beta + \partial_J \partial \gamma \\
&=& \partial_J (J\bar{\beta}) - \partial \partial_J \gamma = -J(\bar{\partial \beta}) - \partial \partial_J \gamma \\
&=& - \partial (\partial_J \gamma).
\end{eqnarray*}
So $\varphi$ is $\partial$-exact and $\mathfrak{a}$ is trivial. The argument works similarly if $\mathfrak{a}\in H^{\bar{J},-}_\partial$. Finally, the fourth map vanishes because of the following:
using the pairing~(\ref{pairing}), the spaces $H^{0,0}_{\6}$ and $H^{4,0}_{\6}$ are dual on compact $SL(2,\mathbb{H})$-manifolds. Moreover, any complex-valued function $f$ satisfying $\6 f=0$
is constant. %Indeed, $\bar\6\bar{f}=0$ and $\bar{f}$ is holomorphic function on a compact manifold and hence constant. Thus $f$ is constant, too. 
Now, let $\mathfrak{a}\in H^{4,0}_{\6}$ represented
by a $\6_J$-exact form $\6_J\phi$. Since $\int_Mc\,\6_J\phi\wedge\bar\Phi=0$ for any constant $c$, it follows from the pairing~(\ref{pairing}) that $\mathfrak{a}$
is the trivial class and hence the fourth map vanishes.
\end{proof}
It is natural to ask whether there are counter-examples in higher dimensions:
\begin{question}
Are there compact $SL(n,\mathbb{H})$-manifolds with $n>2$ on which the quaternionic Fr\"{o}licher spectral sequence does not degenerate at the first page?
\end{question}

%%%%%%%%%%%%%%%%%%%%%%%%%%%%%%%%%%%%%%%%%%%%%%%%%%%%%%%%%%%%%%

\section{``non-HKT-ness" degrees}\label{section9}

%%%%%%%%%%%%%%%%%%%%%%%%%%%%%%%%%%%%%%%%%%%%%%%%%%%%%%%%%%%%%%

%\begin{theorem}
%A compact complex surface is K\"{a}hler if and only if the second non-K\"{a}hler-ness degree vanishes.
%\end{theorem}
%
%\begin{theorem}
%A compact $SL(2,\mathbb{H})$-manifold is HKT if and only if the second non-HKT-ness degree vanishes.
%\end{theorem}

Similarly to the non-K\"{a}hler-ness degrees introduced in~\cite{ang-dlo-tom} by Angella--Dloussky--Tomassini
on compact complex manifolds (see also~\cite{lub-tel,tel}), we define {\it{non-HKT-ness degrees}}~$\Delta^p$ on compact hypercomplex manifolds of real dimension $4n$:
$$\Delta^{p}:=h^{p,0}_{BC}+h^{p,0}_{AE}-2\dim E_2^{p,0},$$ for $0\leqslant p\leqslant 2n.$
By Theorem~\ref{froh_global}, the $\Delta^p$ are non-negative, whilst Theorem~\ref{_bott_chern_aeppli_equa_} assures that the $\partial \partial_J$-Lemma holds if and only if all $\Delta^p$ are zero.
%Furthermore, using the pairing~(\ref{pairing}), on a compact $SL(n,\Bbb H)$ manifold, we have that $$\Delta^{p}=\Delta^{2n-p}.$$\todo{to check}
Recall that on a compact $SL(n,\Bbb H)$ HKT manifold, we have the following result:
\begin{theorem}\cite[Theorem 2.3]{gra-lej-ver}\label{HKTimplies}
Let $M$ be a compact $SL(n,\Bbb H)$ HKT manifold. Then the $\partial \partial_J$-Lemma holds.
\end{theorem}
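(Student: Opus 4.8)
The plan is to prove Theorem~\ref{HKTimplies}, namely that the $\partial\partial_J$-Lemma holds on a compact $SL(n,\mathbb{H})$ HKT manifold. By the characterisation recorded just before Theorem~\ref{froh-lemma}, the $\partial\partial_J$-Lemma is equivalent to the vanishing of $B^{p,0}$ (equivalently $D^{p,0}$) for all $0\leqslant p\leqslant 2n$. So the real content is: every $\partial$-closed, $\partial_J$-exact $(p,0)$-form is $\partial\partial_J$-exact. My strategy is to exploit the Hodge theory built in Section~\ref{section6}, which furnishes the Hodge-star $\ast_\Phi$, the formal adjoints $\partial^{\ast_\Phi}$ and $\partial_J^{\ast_\Phi}$, and the associated Laplacians on a compact $SL(n,\mathbb{H})$-manifold. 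The HKT assumption should enter through the existence of a suitable Kähler-type identity relating $\partial$, $\partial_J$, and their adjoints, since this is exactly the mechanism by which the classical $dd^c$-Lemma follows from Kählerity.

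First I would set up the relevant Laplacians $\Delta_\partial=\partial\partial^{\ast_\Phi}+\partial^{\ast_\Phi}\partial$ and $\Delta_{\partial_J}=\partial_J\partial_J^{\ast_\Phi}+\partial_J^{\ast_\Phi}\partial_J$, together with the Bott--Chern-type fourth-order Laplacian whose kernel computes $H^{p,0}_{BC}$. The key step is to establish a quaternionic Kähler identity on the HKT manifold: concretely, I expect that the HKT condition $\partial\Omega=0$ forces a commutation relation of the form $\partial_J^{\ast_\Phi}=\pm[\,\partial,\,L^{\ast}\,]$ or, more usefully, that $\Delta_\partial=\Delta_{\partial_J}$ as operators on $\Lambda^{p,0}_I(M)$. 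This kind of identity is precisely what Verbitsky's HKT Hodge theory in~\cite{_Verbitsky:HKT_} is designed to supply, and it is the analogue of the equality of the three Laplacians on a compact Kähler manifold.

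Once the Laplacians coincide, the argument is the standard Kähler-identities proof of the $\partial\partial_J$-Lemma, transcribed to the single-complex setting. Given a form $\varphi$ that is $\partial$-closed and $\partial_J$-exact, write $\varphi=\partial_J\psi$. Using the Hodge decomposition with respect to $\Delta_\partial$ (which equals $\Delta_{\partial_J}$), one decomposes $\psi$ into harmonic, $\partial_J$-exact and $\partial_J^{\ast_\Phi}$-exact pieces. The harmonic and $\partial_J$-exact parts are killed by $\partial_J$, so $\varphi=\partial_J\partial_J^{\ast_\Phi}\xi$ for some $\xi$; one then uses $\partial$-closedness of $\varphi$ together with the anti-commutation $\partial\partial_J+\partial_J\partial=0$ and the coincidence of Laplacians to push $\varphi$ into the image of $\partial\partial_J$. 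The bookkeeping differs slightly from the complex case because we are working within the single cochain complex $(\Lambda^{p,0}_I(M),\partial,\partial_J)$ rather than a genuine double complex, so the two differentials act on the same bidegree component; but the anti-commutativity plays exactly the role that bidegree separation plays classically.

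The main obstacle will be establishing the Kähler-type identity $\Delta_\partial=\Delta_{\partial_J}$ (or the equivalent commutator formula) from the HKT hypothesis. In the Kähler setting this rests on the full Kähler identities $[\Lambda,\bar\partial]=-\sqrt{-1}\,\partial^\ast$ and their consequences; in the quaternionic HKT setting one needs the corresponding identities from~\cite{_Verbitsky:HKT_}, which hold precisely because the HKT condition $\partial\Omega=0$ makes $\Omega$ behave like a Kähler form for the pair $(\partial,\partial_J)$. Verifying that these identities are genuinely available on the canonical $\Lambda^{p,0}_I(M)$ complex with the $\ast_\Phi$-adjoints of~\eqref{hodge-adjoints}, and that the $SL(n,\mathbb{H})$ holonomy supplies the symmetry needed for the integration-by-parts arguments to close, is where the real work lies; the subsequent Hodge-theoretic deduction is then formal.
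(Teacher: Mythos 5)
Your outline is essentially the proof of the cited result: the paper itself gives no argument here (Theorem~\ref{HKTimplies} is quoted from~\cite{gra-lej-ver}), and the proof there runs exactly along your lines, namely Verbitsky's quaternionic K\"ahler-type identities from~\cite{_Verbitsky:HKT_} on a compact $SL(n,\mathbb{H})$ HKT manifold (available on all of $\Lambda^{p,0}_I(M)$ because $\Phi$ trivialises the canonical bundle and defines $\ast_\Phi$ and the adjoints of~\eqref{hodge-adjoints}) give $\Delta_\partial=\Delta_{\partial_J}$ together with the commutation $[\partial_J,\partial^{\ast_\Phi}]=0$, after which the formal Hodge-decomposition argument of the classical $dd^c$-Lemma transcribes verbatim with $\bar\partial$ replaced by $\partial_J$. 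Your proposal is correct and takes essentially the same approach.
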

\begin{corollary}\label{vanish-degree}
Let $M$ be a compact $SL(n,\Bbb H)$ manifold. If $M$ is HKT then $\Delta^{p}=0$ for all $0\leqslant p\leqslant 2n.$
\end{corollary}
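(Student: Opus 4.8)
The plan is to chain the two substantive results already established for compact $SL(n,\mathbb{H})$-manifolds, so that this statement becomes a purely formal consequence. The starting point is the hypothesis that $M$ is a compact $SL(n,\mathbb{H})$ HKT manifold. My first step is to invoke Theorem~\ref{HKTimplies}, which guarantees that on such a manifold the $\partial\partial_J$-Lemma holds. This is the only place where the HKT assumption enters, and it is the geometrically meaningful input.

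With the $\partial\partial_J$-Lemma in hand, my second step is to feed it into Theorem~\ref{_bott_chern_aeppli_equa_}, which asserts that the $\partial\partial_J$-Lemma holds \emph{if and only if} the numerical equality
\[
h_{BC}^{p,0} + h_{AE}^{p,0} = 2\dim E_2^{p,0}
\]
holds for every $0\leqslant p \leqslant 2n$. Since the Lemma is known to hold, the forward implication of Theorem~\ref{_bott_chern_aeppli_equa_} yields this equality for all such $p$.

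Finally, I would simply compare this equality with the definition $\Delta^{p} := h^{p,0}_{BC}+h^{p,0}_{AE}-2\dim E_2^{p,0}$: the equality is precisely the statement that $\Delta^{p}=0$ for all $0\leqslant p\leqslant 2n$, which is the desired conclusion.

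There is no real obstacle here, and that is exactly the point: the entire analytic and cohomological content has been isolated into Theorem~\ref{HKTimplies} (the implication HKT $\Rightarrow$ $\partial\partial_J$-Lemma, proven in \cite{gra-lej-ver}) and Theorem~\ref{_bott_chern_aeppli_equa_} (the equivalence of the $\partial\partial_J$-Lemma with equality in the Fr\"olicher-type inequality). Once those are available, the corollary is a one-line deduction, requiring nothing beyond unwinding the definition of $\Delta^{p}$. If anything warrants a sentence of care, it is only to record that the HKT hypothesis combined with the $SL(n,\mathbb{H})$ hypothesis is exactly what Theorem~\ref{HKTimplies} requires, so that both inputs are legitimately applicable.
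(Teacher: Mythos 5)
Your proof is correct and matches the paper's own (implicit) argument: the corollary is stated without proof precisely because it follows immediately from Theorem~\ref{HKTimplies} combined with the equivalence in Theorem~\ref{_bott_chern_aeppli_equa_}, exactly as you chain them. Unwinding the definition of $\Delta^{p}$ then gives the vanishing, so nothing is missing.
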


The goal of this Section is to show that %the following stronger version on $SL(2,\mathbb{H})$-manifolds:
%\begin{theorem}\label{main_result}
a compact $SL(2,\mathbb{H})$-manifold is HKT if and only if the second non-HKT-ness degree vanishes.
%\end{theorem}
This should be interpreted as a quaternionic version of the following result of Teleman~\cite[Lemma 2.3]{tel} and Angella--Dloussky--Tomassini~\cite{ang-dlo-tom} and the recent refinement in \cite{ang-tom-ver}.
\begin{theorem}\cite{tel}
A compact complex surface is K\"{a}hler if and only if the second non-K\"{a}hler-ness degree vanishes.
\end{theorem}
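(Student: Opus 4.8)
The plan is to prove both implications through the classical criterion that a compact complex surface is K\"ahler if and only if its first Betti number $b_1$ is even (Kodaira, Miyaoka, Siu, Buchdahl, Lamari). Here the \emph{second non-K\"ahler-ness degree} is $\Delta^2:=\sum_{p+q=2}\bigl(h^{p,q}_{BC}+h^{p,q}_{AE}\bigr)-2b_2$, where $h^{p,q}_{BC},h^{p,q}_{AE}$ are the complex Bott--Chern and Aeppli numbers; it is nonnegative by the complex Fr\"olicher-type inequality of Angella--Tomassini (the complex counterpart of Theorem~\ref{froh_global}).

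For the easy direction, suppose $X$ is K\"ahler. Then the classical $\partial\bar{\partial}$-Lemma holds, so $h^{p,q}_{BC}=h^{p,q}_{\bar{\partial}}=h^{p,q}_{AE}$ for all $p,q$ and the Fr\"olicher spectral sequence degenerates. Summing over $p+q=2$ gives $\sum(h^{p,q}_{BC}+h^{p,q}_{AE})=2\sum h^{p,q}_{\bar{\partial}}=2b_2$, i.e.~$\Delta^2=0$. This is exactly the complex analogue of Corollary~\ref{vanish-degree}.

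For the converse I would first collapse $\Delta^2$ to a single Hodge-theoretic quantity. Writing $p_g=h^{2,0}_{\bar{\partial}}$ for the geometric genus, note that $\partial$ vanishes on $(2,0)$-forms for dimension reasons, so $H^{2,0}_{BC}$ is just the space of $\bar{\partial}$-closed $(2,0)$-forms and $h^{2,0}_{BC}=p_g$. Combining this with the conjugation symmetry $h^{p,q}_{BC}=h^{q,p}_{BC}$, the Serre-type duality $h^{p,q}_{BC}=h^{2-p,2-q}_{AE}$ (the complex version of the pairing~\eqref{pairing}), and the Fr\"olicher identity $b_2=2p_g+h^{1,1}_{\bar{\partial}}$, a short bookkeeping of the six summands yields
\[
\Delta^2 = 2\bigl(h^{1,1}_{BC}-h^{1,1}_{\bar{\partial}}\bigr).
\]
Hence $\Delta^2=0$ if and only if the natural comparison map $H^{1,1}_{BC}\to H^{1,1}_{\bar{\partial}}$ is an isomorphism.

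The main obstacle is therefore the final step: showing that $h^{1,1}_{BC}-h^{1,1}_{\bar{\partial}}$ equals $h^{0,1}_{\bar{\partial}}-h^{1,0}_{\bar{\partial}}$, which is $0$ or $1$ according as $b_1$ is even or odd. I would extract this from the long exact sequence comparing Bott--Chern and Dolbeault cohomology (the analogue of Varouchas' sequences of Lemma~\ref{Var_sequence}): the kernel of $H^{1,1}_{BC}\to H^{1,1}_{\bar{\partial}}$ is measured by $(1,1)$-forms that are $\bar{\partial}$-exact but not $\partial\bar{\partial}$-exact, and the cokernel is detected by the obstruction class $[\partial\alpha]\in H^{2,1}_{\bar{\partial}}$. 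Serre duality $h^{2,1}_{\bar{\partial}}=h^{0,1}_{\bar{\partial}}$ together with the surface-specific fact that holomorphic $1$-forms are $d$-closed (which forces $h^{0,1}_{\bar{\partial}}-h^{1,0}_{\bar{\partial}}\in\{0,1\}$ via $b_1=h^{1,0}_{\bar{\partial}}+h^{0,1}_{\bar{\partial}}$) then identifies the discrepancy precisely with $h^{0,1}_{\bar{\partial}}-h^{1,0}_{\bar{\partial}}$. Since this difference vanishes exactly when $b_1$ is even, we conclude $\Delta^2=0 \iff b_1 \text{ even}\iff X$ K\"ahler. The delicate part, and the step I expect to demand the most care, is controlling this kernel and cokernel \emph{simultaneously}, rather than merely bounding $\Delta^2$ from one side as in the inequality of Theorem~\ref{froh_global}.
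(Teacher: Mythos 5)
The paper does not actually prove this statement: it is quoted from Teleman~\cite[Lemma 2.3]{tel} (see also Angella--Dloussky--Tomassini~\cite{ang-dlo-tom}) purely as the complex-geometric model for Theorem~\ref{dichotomy}, so your proposal has to be measured against the known proof and against the paper's quaternionic analogue. The parts of your argument that you carry out are correct and follow the standard route: the K\"ahler direction via the $\partial\bar{\partial}$-Lemma is fine, and the reduction $\Delta^2=2\bigl(h^{1,1}_{BC}-h^{1,1}_{\bar{\partial}}\bigr)$ is a valid computation, using $h^{2,0}_{BC}=p_g$ (since $\partial$ annihilates $(2,0)$-forms on a surface and the $\partial\bar{\partial}$-denominator is trivial), conjugation symmetry, the duality $h^{p,q}_{AE}=h^{2-p,2-q}_{BC}$, and Kodaira's degeneration $b_2=2p_g+h^{1,1}_{\bar{\partial}}$~\cite{kodaira}.

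The genuine gap is in the last step, which is precisely Teleman's Lemma~2.3 and carries all of the analytic content: you assert $h^{1,1}_{BC}-h^{1,1}_{\bar{\partial}}=h^{0,1}_{\bar{\partial}}-h^{1,0}_{\bar{\partial}}$ but do not prove it, and the mechanism you propose for the cokernel cannot work as stated. On a compact surface the obstruction class $[\partial\alpha]\in H^{2,1}_{\bar{\partial}}$ is exactly the Fr\"olicher differential $d_1$ evaluated on $E_1^{1,1}$, hence it vanishes \emph{identically} by the very degeneration you invoke, so it detects nothing. Its vanishing only yields $\partial\alpha=\bar{\partial}\mu$; to correct $\alpha$ within its Dolbeault class to a $d$-closed $(1,1)$-form one needs the stronger condition $\partial\alpha\in\Ima\partial\bar{\partial}$, i.e.\ the vanishing of a secondary obstruction living in the bidegree-$(2,1)$ analogue of the space $A$ of Lemma~\ref{Var_sequence}, namely $(\Ima\partial\cap\Ima\bar{\partial})/\Ima\partial\bar{\partial}$. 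Showing that this space and the kernel contribution $(\ker\partial\cap\Ima\bar{\partial})/\Ima\partial\bar{\partial}$ have exactly the sizes dictated by the parity of $b_1$ is where Teleman and~\cite{ang-dlo-tom} bring in Hodge theory on surfaces (ellipticity of the Gauduchon $dd^c$-type operator and Serre duality), and your sketch supplies no substitute for that input. It is instructive that the paper's quaternionic proof of Theorem~\ref{dichotomy} sidesteps the computation of $h^{1,1}_{BC}$-type numbers altogether: it bounds $\Delta^2\leqslant 2$ via the exact sequence $0\to H^{1,0}_{\partial}\to H^{1,0}_{AE}\xrightarrow{\partial}H^{2,0}_{BC}\to H^{2,0}_{\partial}$ combined with the degree map on Aeppli cohomology for a Gauduchon metric (Theorem~\ref{exact_seq_AE}); the complex counterpart of that degree-map argument, applied to $H^{0,1}_{AE}$, is probably the cleanest way to close the gap in your final step.
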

The proof relies on the main result of~\cite{gra-lej-ver}:
\begin{theorem}\cite{gra-lej-ver}\label{result-mehdi-geo-misha}
Let $(M,I,J,K,\Phi)$ be a compact $SL(2,\Bbb H)$-manifold. Then $M$ is HKT if and only if  $h^{1,0}_{\6}$ is even dimensional.
\end{theorem}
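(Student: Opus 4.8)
The statement asserts that a compact $SL(2,\mathbb{H})$-manifold $(M,I,J,K,\Phi)$ is HKT if and only if $h^{1,0}_{\partial}$ is even, and I would prove the two implications by entirely different means. For the forward direction (HKT implies $h^{1,0}_{\partial}$ even) the plan is to manufacture a quaternionic structure on a group isomorphic to $H^{1,0}_{\partial}$, using the conjugation $\bar{J}\colon\phi\mapsto J(\bar\phi)$ of Proposition~\ref{conjugation-symmetry}. On $\Lambda^{1,0}_I(M)$ one computes $\bar{J}^2=J^2=-\mathrm{Id}$ (since $J^2=(-1)^p$ on $p$-forms), and as $\bar{J}$ interchanges the roles of $\partial$ and $\partial_J$ it maps the space of simultaneously $\partial$- and $\partial_J$-closed forms to itself. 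Because $\Ima\partial\partial_J=0$ on $\Lambda^{1,0}_I(M)$, this space is exactly $H^{1,0}_{BC}$, so $\bar{J}$ restricts there to a complex-antilinear involution squaring to $-\mathrm{Id}$, forcing $h^{1,0}_{BC}$ to be even. To pass from $h^{1,0}_{BC}$ to $h^{1,0}_{\partial}$ I would invoke Theorem~\ref{HKTimplies}: HKT yields the $\partial\partial_J$-Lemma, hence $B^{p,0}\simeq D^{p,0}\simeq 0$ for all $p$. Feeding $D^{1,0}\simeq 0$ and $E^{1,0}\simeq B^{2,0}\simeq 0$ (the latter via the isomorphism $\partial_J\colon E^{1,0}\to B^{2,0}$ of Proposition~\ref{conjugation-symmetry}) into Varouchas' second exact sequence collapses it to $H^{1,0}_{BC}\simeq H^{1,0}_{\partial}$, so $h^{1,0}_{\partial}=h^{1,0}_{BC}$ is even.

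For the converse I would argue by contraposition (if $h^{1,0}_{\partial}$ is odd then $M$ is not HKT), imitating the analytic strategy of the surface theorem. First I recast HKT cohomologically: for a real $(2,0)$-form, $\bar{J}$-invariance makes $\partial\Omega=0$ equivalent to $\partial\Omega=\partial_J\Omega=0$, so $M$ is HKT precisely when $H^{2,0}_{BC}$ contains the class of a positive $(2,0)$-form coming from a quaternionic Hermitian metric. Using the nondegenerate Bott--Chern/Aeppli pairing of Section~\ref{section6} between $H^{2,0}_{BC}$ and $H^{2,0}_{AE}$, a Hahn--Banach (Lamari-type) separation then gives a dichotomy: either such a positive class exists, or there is a nonzero nonnegative $\partial\partial_J$-closed current, defining a class in $H^{2,0}_{AE}$, that annihilates every positive form. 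The remaining task is to show that the presence of such an obstructing current forces $h^{1,0}_{\partial}$ to be odd.

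To establish that last link I would transport the obstruction to the real/imaginary and self-dual/anti-self-dual decompositions of $H^{2,0}_{\partial}$, which descend to cohomology by Theorem~\ref{pure-and-full} and the Hodge theory of Section~\ref{section6}. On the real subspace $H^{\bar{J},+}_{\partial}$ the pairing $([\alpha],[\beta])\mapsto\int_M\alpha\wedge\beta\wedge\bar\Phi$ is real, symmetric and nondegenerate, positive on self-dual and negative on anti-self-dual classes, and HKT amounts to the existence of a positive class in it. The crucial and by far hardest step is a parity computation showing that the positive index of this form has opposite parity to $h^{1,0}_{\partial}$ — equivalently that the obstruction above contributes an odd-dimensional defect exactly when $h^{1,0}_{\partial}$ is even. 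For this I would exploit Frölicher degeneration (Theorem~\ref{froelicher}) to read the quaternionic Dolbeault numbers off the complex $(\Lambda^{\bullet,0}_I(M),\partial+\partial_J)$, together with the $SL(2,\mathbb{H})$-symmetry $h^{p,0}_{\partial}=h^{4-p,0}_{\partial}$ and the involution $\bar{J}$ to account for the forced even contributions, isolating the single parity that the obstruction can flip. This positivity-versus-parity equivalence is the quaternionic counterpart of the deep implication in the surface case (Buchdahl--Lamari), and it is where essentially all the difficulty resides; the structural results quoted above merely reduce the theorem to it.
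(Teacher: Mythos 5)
Your forward direction is sound and essentially the standard argument: $\bar{J}$ is anti-linear with $\bar{J}^2=-\mathrm{Id}$ on $\Lambda^{1,0}_I(M)$, it preserves $\ker\partial\cap\ker\partial_J=H^{1,0}_{BC}$, hence $h^{1,0}_{BC}$ is even, and the $\partial\partial_J$-Lemma (Theorem~\ref{HKTimplies}) collapses Varouchas' second sequence to give $H^{1,0}_{BC}\simeq H^{1,0}_{\partial}$. Note, however, that the paper under review does not prove this theorem at all: it is imported verbatim from \cite{gra-lej-ver}, so the only meaningful comparison is with the proof in that reference.

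For the converse there is a genuine gap, and you have in effect flagged it yourself: the ``positivity-versus-parity'' equivalence you defer to at the end \emph{is} the theorem, and nothing in your sketch (Fr\"olicher degeneration, $SL(2,\mathbb{H})$-symmetry, the involution $\bar{J}$) produces it. Your proposed mechanism --- that the index of the intersection form on $H^{\bar{J},+}_{\partial}$ has parity opposite to $h^{1,0}_{\partial}$, in the style of Buchdahl's $b^{+}=2p_g+1$ computation for surfaces --- is not established anywhere and is not how the cited proof runs. In \cite{gra-lej-ver} the parity enters through degree one, not degree two: for a quaternionic Gauduchon metric one has the exact sequence $0\to H^{1,0}_{\partial}\to H^{1,0}_{AE}\xrightarrow{\deg}\mathbb{C}$ (Theorem~\ref{exact_seq_AE}), and since $J$ makes $h^{1,0}_{AE}$ even, evenness of $h^{1,0}_{\partial}$ forces $h^{1,0}_{AE}=h^{1,0}_{\partial}$, i.e.\ the degree map vanishes identically. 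The analytic heart of the proof is then to show that \emph{non}-existence of an HKT metric produces, via a Hahn--Banach/Lamari separation with positive $(2,0)$-currents together with elliptic regularity (the ingredient quoted in Section~\ref{section10} as \cite[Theorem 4.6]{gra-lej-ver}), an Aeppli class of degree one of nonzero degree --- contradicting the vanishing above. So the obstruction current is converted into a statement about $H^{1,0}_{AE}$ and the degree map, not into a signature statement on $H^{2,0}$; your reduction stops exactly where the real work begins, and moreover your dichotomy step would itself need the regularity theory for $\partial\partial_J$-closed positive currents that you do not address. As written, the converse implication is a plausible program, not a proof.
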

We also need the following results: %to prove Theorem~\ref{main_result}
\begin{theorem}\cite[Theorem 6.5]{gra-lej-ver}\label{exact_seq_AE}
Let $(M,I,J,K,\Phi)$ be a compact $SL(2,\mathbb{H})$ manifold. For a quaternionic Gauduchon metric $\Omega$, the degree map 
$$\text{deg} : H^{1,0}_{AE} \to \mathbb{C}: \alpha \mapsto \int_M \partial \alpha \wedge \Omega \wedge \bar{\Phi}$$
is well-defined and the sequence
\begin{equation*}
0 \longrightarrow{} H^{1,0}_{\partial} \longrightarrow{} H^{1,0}_{AE} \xrightarrow{\text{deg}} \mathbb{C}
\end{equation*}
is exact.
\end{theorem}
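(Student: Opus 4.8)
The plan is to treat the two assertions — that $\deg$ descends to $H^{1,0}_{AE}$ and that the resulting sequence is exact — separately, reducing the exactness to a single non-degeneracy statement for the pairing with $\Omega$.

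First, for well-definedness I would show that the value $\int_M\partial\alpha\wedge\Omega\wedge\bar\Phi$ depends only on the Aeppli class of $\alpha$. Changing the representative by $\partial\beta+\partial_J\gamma$ alters $\partial\alpha$ by $\partial\partial_J\gamma$ (the $\partial\beta$ term dies since $\partial^2=0$), so it suffices that $\int_M\partial\partial_J\gamma\wedge\Omega\wedge\bar\Phi=0$ for every function $\gamma$. I would integrate by parts twice. Moving $\partial$ onto $\Omega$ — the boundary term vanishing by the Stokes-type identity $\int_M\partial\phi=0$ on $\Lambda^{3,4}_I(M)$ together with $\partial\bar\Phi=0$ — rewrites the integral as $\int_M\partial_J\gamma\wedge\partial\Omega\wedge\bar\Phi$; moving $\partial_J$ off $\gamma$ — using $\partial_J\bar\Phi=0$ — rewrites it as a multiple of $\int_M\gamma\,\partial_J\partial\Omega\wedge\bar\Phi$. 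Since $\Omega$ is quaternionic Gauduchon, $\partial_J\partial\Omega=-\partial\partial_J\Omega=0$, so the integral vanishes. This is the only place the Gauduchon hypothesis enters well-definedness.

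Next I would set up a structural reduction. For a representative with $\partial\partial_J\alpha=0$ the $(2,0)$-form $\partial\alpha$ is $\partial$-exact and $\partial_J$-closed, so it defines a class in $D^{2,0}=(\Ima\partial\cap\ker\partial_J)/\Ima\partial\partial_J$; the assignment $\pi\colon[\alpha]_{AE}\mapsto[\partial\alpha]$ is well-defined and surjective onto $D^{2,0}$, and the functional $\iota\colon D^{2,0}\to\mathbb{C}$, $[\eta]\mapsto\int_M\eta\wedge\Omega\wedge\bar\Phi$, is well-defined by the previous paragraph, with $\deg=\iota\circ\pi$. One checks that $\ker\pi$ is exactly the image of the natural map $H^{1,0}_\partial\to H^{1,0}_{AE}$ (if $\partial\alpha=\partial\partial_J\gamma$, then $\alpha-\partial_J\gamma$ is $\partial$-closed and Aeppli-equivalent to $\alpha$). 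Hence the whole sequence is exact if and only if (a) $H^{1,0}_\partial\to H^{1,0}_{AE}$ is injective, equivalently $B^{1,0}=A^{1,0}$ in Varouchas' sequence (Lemma~\ref{Var_sequence}), and (b) $\iota$ is injective.

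Finally, the analytic heart is (a) and (b), and I expect this to be the real obstacle. I would recast the degree via the inner product~\eqref{_usual_prod_}: since $\ast_\Phi\Omega=\Omega$ (the self-duality recorded in~\eqref{two_decompositions}) one has $\deg[\alpha]=\langle\partial\alpha,\Omega\rangle$. For (b) I would take the $\ast_\Phi$-harmonic representative of a class in $D^{2,0}$ and use the self-dual/anti-self-dual and real/imaginary decompositions of $\Lambda^{2,0}_I(M)$ from~\eqref{two_decompositions} to isolate its component along $\Omega$; positivity of the Gauduchon form should then force a class of vanishing degree to be $\partial\partial_J$-exact, which simultaneously yields $\dim D^{2,0}\leqslant1$ (as exactness demands, since then $\dim H^{1,0}_{AE}-\dim H^{1,0}_\partial\leqslant1$). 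For (a) I would prove that a $\partial$-closed, $\partial_J$-exact $(1,0)$-form is $\partial$-exact by a harmonic integration-by-parts argument, controlling the $\partial_J$-coexact contributions through the Fr\"olicher degeneration of Theorem~\ref{froelicher}, which makes each map $H^{p,0}_\partial\xrightarrow{\partial_J}H^{p+1,0}_\partial$ vanish. The difficulty, absent in the complex surface case, is that $(\Lambda^{p,0}_I(M),\partial,\partial_J)$ is not a double complex and there is no a priori identity relating $\Delta_\partial$ and $\Delta_{\partial_J}$, so both the coexactness estimate in (a) and the non-degeneracy of the $\Omega$-pairing in (b) must be wrung out of the $\ast_\Phi$-formalism and the positivity of $\Omega$ by hand.
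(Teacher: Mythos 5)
Your attempt should first be measured against the right benchmark: the present paper does not prove this theorem at all --- it quotes it as \cite[Theorem 6.5]{gra-lej-ver} and only assembles the surrounding tools (Lemma~\ref{first-b-vanish}, the elliptic theory of \cite[Theorem 4.6]{gra-lej-ver} used in the proof of Theorem~\ref{SG-HKT}, the Hodge--Riemann relations of~\cite{_Verbitsky:balanced_}). Within that frame, your first two steps are correct and essentially the standard ones: the double integration by parts against $\bar\Phi$, with the Gauduchon condition entering exactly once through $\partial_J\partial\Omega=-\partial\partial_J\Omega=0$, does establish well-definedness; and your reduction of exactness to (a) injectivity of $H^{1,0}_\partial\to H^{1,0}_{AE}$ and (b) injectivity of $\iota\colon D^{2,0}\to\mathbb{C}$, via $\deg=\iota\circ\pi$ with $\pi$ surjective and $\ker\pi=\Ima\bigl(H^{1,0}_\partial\to H^{1,0}_{AE}\bigr)$, is sound. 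But (a) and (b) are precisely the analytic content of the theorem, and there your proposal has genuine gaps.

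For (a), Fr\"olicher degeneration cannot do the job you assign it. Degeneration at $E_1$ makes the induced maps $H^{p,0}_\partial\xrightarrow{\partial_J}H^{p+1,0}_\partial$ vanish, but the obstruction to injectivity of $H^{1,0}_\partial\to H^{1,0}_{AE}$ is the image of $B^{1,0}$ in $H^{1,0}_\partial$ (Lemma~\ref{Var_sequence}): a class there is represented by $\partial_J\gamma$ with $\gamma$ \emph{not} $\partial$-closed, so the vanishing of $\partial_J$ on $H^{0,0}_\partial$ says nothing about it. Indeed, by Theorem~\ref{froh-lemma} degeneration kills $B^{1,0}$ only in conjunction with the extra input $A^{1,0}\simeq 0$, which is itself an analytic statement about functions that no spectral-sequence argument provides. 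What you actually need --- and never invoke --- is Lemma~\ref{first-b-vanish}, $b^{1,0}=0$ on any compact hypercomplex manifold, which is proved in \cite{gra-lej-ver} by an elliptic argument and gives injectivity immediately. For (b), ``take the $\ast_\Phi$-harmonic representative and use positivity'' does not get off the ground: $D^{2,0}$ carries no harmonic theory, and $L^2$-orthogonality of a representative to $\Omega$ is not the pointwise primitivity $\eta\wedge\Omega=0$ that the Hodge--Riemann relations require. The missing idea is a zeroth-order elliptic correction \emph{within the class}: since $\deg[\alpha]=0$ means the density $h(\partial\alpha,\Omega)$ has zero mean, \cite[Theorem 4.6]{gra-lej-ver} (used verbatim in the proof of Theorem~\ref{SG-HKT}) produces a real function $f$ with $\left(\partial\alpha+\partial\partial_J f\right)\wedge\Omega\wedge\bar\Phi=0$ pointwise; then $\eta'=\partial(\alpha+\partial_J f)$ is $\partial$-exact, $\partial_J$-closed and primitive, and the quaternionic Hodge--Riemann relations together with Stokes --- exactly the computation in the proof of Theorem~\ref{SG-HKT} with the roles of $\partial$ and $\partial_J$ exchanged, using $\partial_J\eta'=0$ --- give $\|\eta'\|_h^2=\pm\int_M \eta'\wedge \partial_J\bigl(J\bar\beta\bigr)\wedge\bar\Phi=0$ for $\beta=\alpha+\partial_J f$, hence $[\partial\alpha]=0$ in $D^{2,0}$. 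So your outline is a correct \emph{reduction}, but without Lemma~\ref{first-b-vanish} and the elliptic solvability step it is not a proof, and the two substitutes you propose (degeneration for (a), harmonic representatives for (b)) both fail for the reasons above.
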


%\begin{remark}
%We point out that, for the same result to hold on general $SL(n,\mathbb{H})$-manifolds, we would need to impose an additional metric constraint such as for example the strongly asteno-HKT condition: $\partial \Omega^{n-2} \in \Ima \partial_J.$ Example 3 is a compact $SL(3,\mathbb{H})$-manifold for whom the above sequence is not exact.\todo{need reference in hermitian case, need to define separetely asteno-HKT, why we shoudl care about it .. I think this remark ''est de trop"}
%\end{remark}

\begin{lemma}\cite[Lemma 6.3]{gra-lej-ver}\label{first-b-vanish}
Let $(M,I,J,K,\Omega)$ be a compact hypercomplex manifold of real dimension $4n$. Then $b^{1,0}=0$.
\end{lemma}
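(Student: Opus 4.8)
The plan is to reduce $b^{1,0}=0$ to the statement that on a compact hypercomplex manifold the only functions annihilated by $\partial\partial_J$ are the constants, and then to deduce this from ellipticity and the maximum principle. First I would unwind the definition of $B^{1,0}$: for $p=1$ the denominator $\partial\partial_J\Lambda_I^{-1,0}(M)$ is zero, so $B^{1,0}=\ker\partial\cap\Ima\partial_J$ is a genuine subspace of $\Lambda_I^{1,0}(M)$. Every $\partial_J$-exact $(1,0)$-form is $\partial_J f$ for some function $f$, and $\partial$-closedness reads $\partial\partial_J f=0$; since the quotient is trivial, $B^{1,0}=0$ is equivalent to the implication $\partial\partial_J f=0\Rightarrow\partial_J f=0$. (Alternatively one may invoke the isomorphism $\partial_J\colon E^{0,0}\xrightarrow{\sim}B^{1,0}$ of Proposition~\ref{conjugation-symmetry}, which identifies $B^{1,0}$ with $\ker(\partial\partial_J)/\mathbb{C}$ on functions.) On functions $\partial_J f=J^{-1}\bar\partial f$, so $\partial_J f=0$ is equivalent to $\bar\partial f=0$, i.e.\ to $f$ being holomorphic and hence constant on the compact connected $M$. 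Thus it suffices to show $\partial\partial_J f=0\Rightarrow f$ is constant.

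Next I would pass to real functions using the real/imaginary splitting \eqref{real_decomposition}. A short computation with the relation $\partial_J=J^{-1}\bar\partial J$ shows that $\partial\partial_J$ sends real functions to \emph{real} $(2,0)$-forms, i.e.\ forms fixed by $\phi\mapsto J\bar\phi$. Writing $f=u+\sqrt{-1}\,v$ with $u,v$ real, the form $\partial\partial_J u$ is then real while $\sqrt{-1}\,\partial\partial_J v$ is imaginary, so the hypothesis $0=\partial\partial_J f=\partial\partial_J u+\sqrt{-1}\,\partial\partial_J v$ together with the direct sum \eqref{real_decomposition} forces $\partial\partial_J u=0$ and $\partial\partial_J v=0$ separately. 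Hence it is enough to treat a single real function $u$ with $\partial\partial_J u=0$.

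Finally I would run a maximum-principle argument, in exact analogy with the complex fact that a function with $\partial\bar\partial f=0$ on a compact complex manifold is harmonic, and therefore constant. Fixing any quaternionic Hermitian metric with fundamental form $\Omega$, I would contract $\partial\partial_J u$ with $\Omega$ and check, in a local quaternionic orthonormal coframe, that $u\mapsto \mathrm{tr}_\Omega\,\partial\partial_J u$ is a second-order operator with positive-definite principal symbol (of size $|\xi|^2$) and no zeroth-order term, i.e.\ an elliptic Laplace-type operator with real coefficients. Then $\partial\partial_J u=0$ gives $\mathrm{tr}_\Omega\,\partial\partial_J u=0$, and the strong maximum principle forces $u$ to be constant; the same applies to $v$, so $f$ is constant, $\partial_J f=0$, and $B^{1,0}=0$. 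The step I expect to be the main obstacle is precisely this local identification: one must verify that the $J$-twist in $\partial_J$ does not destroy the positivity of the symbol and that no zeroth-order term appears, so that the kernel of the resulting operator on the compact connected $M$ consists only of constants. The reality of $\partial\partial_J u$ used in the previous paragraph, while elementary, must likewise be established carefully from the $J$-action on forms.
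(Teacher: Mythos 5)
Your proposal is correct and follows essentially the same route as the proof the paper invokes from \cite[Lemma 6.3]{gra-lej-ver}: reduce $b^{1,0}=0$ to the implication $\partial\partial_J f=0\Rightarrow\partial_J f=0$ on functions, then trace against $\Omega$ to obtain a second-order elliptic operator with no zeroth-order term and conclude by the strong maximum principle --- which is exactly why only the quaternionic Hermitian form $\Omega$ (and no $SL(n,\mathbb{H})$ hypothesis) is needed, in any real dimension $4n$, as the paper remarks. Your intermediate splitting of $f$ into real and imaginary parts via the $\bar{J}$-decomposition \eqref{real_decomposition} is sound but dispensable, since one may instead apply the maximum principle directly to the real and imaginary parts of $\mathrm{tr}_\Omega\,\partial\partial_J f$.
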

Remark that the proof of~\cite[Lemma 6.3]{gra-lej-ver} holds without the $SL(2,\mathbb{H})$ assumption and works in any real dimension $4n.$

\begin{theorem}\label{vanishing_degree}
On a compact $SL(2,\Bbb H)$-manifold, $\Delta^1=\Delta^3=0.$ 
\end{theorem}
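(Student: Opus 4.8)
The plan is to combine the degeneration of the quaternionic Fr\"olicher spectral sequence with Varouchas' sequences, and then to extract the vanishing from a parity argument. Since $(M,I,J,K,\Phi)$ is a compact $SL(2,\mathbb{H})$-manifold, Theorem~\ref{froelicher} gives $\dim E_2^{p,0}=h^{p,0}_\partial$ for all $p$, so $\Delta^p=h^{p,0}_{BC}+h^{p,0}_{AE}-2h^{p,0}_\partial$. Because $h^{p,0}_{BC}=h^{2n-p,0}_{AE}$ and $h^{p,0}_\partial=h^{2n-p,0}_\partial$ (the $SL(2,\mathbb{H})$-symmetry of Section~\ref{section6}), this expression is invariant under $p\mapsto 4-p$, hence $\Delta^3=\Delta^1$ and it suffices to treat $\Delta^1$.

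First I would rewrite $\Delta^1$ through Lemma~\ref{Var_sequence}. Since $b^{1,0}=0$ by Lemma~\ref{first-b-vanish}, the inclusion $A^{1,0}\hookrightarrow B^{1,0}$ and $B^{1,0}\simeq D^{1,0}$ (Proposition~\ref{conjugation-symmetry}) give $a^{1,0}=d^{1,0}=0$, so the two exact sequences collapse to $h^{1,0}_{AE}=h^{1,0}_\partial+c^{1,0}$ and $f^{1,0}=h^{1,0}_{BC}-h^{1,0}_\partial+e^{1,0}$. Using $c^{1,0}\simeq e^{1,0}$ from Proposition~\ref{conjugation-symmetry} this yields $\Delta^1=f^{1,0}=e^{1,0}-(h^{1,0}_\partial-h^{1,0}_{BC})$, where $h^{1,0}_\partial-h^{1,0}_{BC}=\operatorname{rank}\delta$ for the natural map $\delta\colon H^{1,0}_\partial\to E^{1,0}$ (the map $H^{1,0}_{BC}\to H^{1,0}_\partial$ being injective as $d^{1,0}=0$). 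The goal then becomes surjectivity of $\delta$, i.e.\ $\operatorname{rank}\delta=e^{1,0}$. To control $e^{1,0}$ I would invoke Proposition~\ref{conjugation-symmetry}, which gives $E^{1,0}\simeq B^{2,0}$ and $e^{1,0}=c^{1,0}$, together with Theorem~\ref{exact_seq_AE}, whose degree exact sequence identifies $c^{1,0}=\dim\operatorname{Im}(\deg)\leqslant 1$ since $\deg$ takes values in $\mathbb{C}$. Thus $e^{1,0}\in\{0,1\}$, and the case $e^{1,0}=0$ is immediate.

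The heart of the matter is the remaining case $e^{1,0}=b^{2,0}=1$, and here the decisive new input is a parity observation. In bidegree $(1,0)$ one has $H^{1,0}_{BC}=\ker\partial\cap\ker\partial_J$, because $\operatorname{Im}\partial\partial_J=0$ there for degree reasons. The conjugation $\bar J$ of Proposition~\ref{conjugation-symmetry} preserves this space (it is complex-antilinear and interchanges $\partial$ with $\partial_J$), while a direct check gives $\bar J^2=-\operatorname{Id}$ on $\Lambda^{1,0}_I(M)$. Hence $\bar J$ endows $H^{1,0}_{BC}$ with a quaternionic structure, forcing $h^{1,0}_{BC}$ to be \emph{even}. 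On the other hand, $b^{2,0}=1$ means $B^{2,0}\neq 0$, so the $\partial\partial_J$-Lemma fails and $M$ cannot be HKT by Theorem~\ref{HKTimplies} (equivalently Corollary~\ref{vanish-degree}); Theorem~\ref{result-mehdi-geo-misha} then forces $h^{1,0}_\partial$ to be \emph{odd}.

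Combining the two parities, $\operatorname{rank}\delta=h^{1,0}_\partial-h^{1,0}_{BC}$ is odd, hence $\geqslant 1$; but $\operatorname{rank}\delta\leqslant e^{1,0}=1$, so $\operatorname{rank}\delta=1=e^{1,0}$ and $\delta$ is surjective. In either case $f^{1,0}=0$, giving $\Delta^1=0$ and therefore $\Delta^3=0$. The main obstacle is exactly this case $b^{2,0}=1$: the formal Varouchas bookkeeping only yields $\operatorname{rank}\delta\leqslant e^{1,0}$, and the extra unit of rank needed for surjectivity of $\delta$ can be produced only through genuine geometry, namely the parity clash between the quaternionic (hence even) dimension of $H^{1,0}_{BC}$ and the odd dimension of $H^{1,0}_\partial$ on a non-HKT $SL(2,\mathbb{H})$-manifold. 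I would double-check that the sign in $\bar J^2=-\operatorname{Id}$ and the interchange of $\partial,\partial_J$ under $\bar J$ are exactly those already used in Proposition~\ref{conjugation-symmetry} and in the proof of Theorem~\ref{froelicher}, so that no new computation beyond those conventions is required.
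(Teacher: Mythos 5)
Your proof is correct, and its skeleton coincides with the paper's: both reduce to $\Delta^1$ via Theorem~\ref{froelicher} and $SL(2,\mathbb{H})$-symmetry, both use Lemma~\ref{first-b-vanish} (giving $d^{1,0}=0$, hence $H^{1,0}_{BC}\hookrightarrow H^{1,0}_\partial$ and $h^{1,0}_{BC}\leqslant h^{1,0}_\partial$), and both use the degree sequence of Theorem~\ref{exact_seq_AE} to bound $h^{1,0}_{AE}\leqslant h^{1,0}_\partial+1$. Where you genuinely diverge is the closing parity step. The paper stays purely formal: it observes that $\bar{J}$ defines a quaternionic structure on \emph{both} $H^{1,0}_{BC}$ and $H^{1,0}_{AE}$, so their sum is even and, being squeezed between $2h^{1,0}_\partial$ and $2h^{1,0}_\partial+1$ by Theorem~\ref{froh_global}, must equal $2h^{1,0}_\partial$ --- no case analysis and no geometric input beyond the quaternionic structure. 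You instead use evenness of $h^{1,0}_{BC}$ only, split on $e^{1,0}\in\{0,1\}$, and in the critical case $e^{1,0}=b^{2,0}=1$ import the deep analytic result Theorem~\ref{result-mehdi-geo-misha} (via Theorem~\ref{HKTimplies}: $B^{2,0}\neq 0$ means the $\partial\partial_J$-Lemma fails, so $M$ is not HKT and $h^{1,0}_\partial$ is odd) to manufacture the parity clash with the even $h^{1,0}_{BC}$, forcing $\operatorname{rank}\delta=1=e^{1,0}$ and $f^{1,0}=0$. Both routes are valid and non-circular --- Theorem~\ref{result-mehdi-geo-misha} does not depend on the present statement --- but note the trade-off: your argument leans on the hardest background theorem (the HKT criterion), whereas the paper needs only the elementary extra observation that $\bar{J}$ also descends to the quotient $H^{1,0}_{AE}$, since it preserves $\ker\partial\partial_J$ and $\Ima\partial+\Ima\partial_J$ exactly as in Proposition~\ref{conjugation-symmetry}. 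On the other hand, your bookkeeping isolates precisely where the formal argument stalls ($\operatorname{rank}\delta\leqslant e^{1,0}$, with $\Delta^1=f^{1,0}=e^{1,0}-\operatorname{rank}\delta$) and exhibits the missing unit of rank as genuinely geometric, which is a nice structural insight; if you want the lighter proof, simply replace your case $e^{1,0}=1$ by the observation that $h^{1,0}_{AE}$ is even as well.
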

\begin{proof}
First, by Theorem~\ref{froelicher}, $E_1^{p,0} \simeq E_2^{p,0}$ on compact $SL(2,\mathbb{H})$-manifolds. Using $SL(2,\mathbb{H})$-symmetry, we get
$$\Delta^1 = h^{1,0}_{BC} + h^{1,0}_{AE} - 2 h^{1,0}_\partial = h^{3,0}_{AE} + h^{3,0}_{BC} - 2 h^{3,0}_\partial = \Delta^3.$$
So it is sufficient to prove that $\Delta^1=0.$
Theorem~\ref{exact_seq_AE} implies that $$h^{1,0}_{\partial}\leqslant h^{1,0}_{AE} \leqslant h^{1,0}_{\partial}+1.$$  
We deduce from Lemma~\ref{first-b-vanish} that  $h^{1,0}_{BC}\leqslant  h^{1,0}_{\6}$ and from Theorem~\ref{froh_global} we get that
\begin{equation*}
2\,h^{1,0}_{\6} \leqslant h^{1,0}_{AE}+ h^{1,0}_{BC} \leqslant 2\,h^{1,0}_{\6} +1.
\end{equation*}
However, $J$ defines a quaternionic structure on both $H^{1,0}_{BC}$ and $H^{1,0}_{AE}$. Thus the dimensions $h^{1,0}_{AE}$ and $h^{1,0}_{BC}$
are even and $h^{1,0}_{AE}+h^{1,0}_{BC}=2\, h^{1,0}_{\6}$. Hence, $\Delta^1=0$.
\end{proof}

We have now all the ingredients to prove the desired result:
\begin{theorem}\label{dichotomy}
On a compact $SL(2,\mathbb{H})$-manifold $\Delta^2 \in \{0,2\}$. Moreover, the $SL(2,\mathbb{H})$-manifold is HKT if and only if $\Delta^2=0$.
\end{theorem}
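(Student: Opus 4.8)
The plan is to show that, of the numbers $\Delta^0,\dots,\Delta^4$, only $\Delta^2$ can be nonzero, that it is forced into $\{0,2\}$, and that its vanishing is equivalent to the $\partial\partial_J$-Lemma and hence, on a compact $SL(2,\mathbb{H})$-manifold, to being HKT. For the first reduction: by Theorem~\ref{froelicher} the quaternionic Fr\"{o}licher spectral sequence degenerates, so $\dim E_2^{2,0}=h^{2,0}_\partial$, and the $H_{BC}$--$H_{AE}$ duality in the case $p=2=2n-2$ gives $h^{2,0}_{BC}=h^{2,0}_{AE}$; hence $\Delta^2=2\bigl(h^{2,0}_{BC}-h^{2,0}_\partial\bigr)$, which is even and, by Theorem~\ref{froh_global}, non-negative. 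To bound it I would use the second sequence of Lemma~\ref{Var_sequence}: the kernel of $H^{2,0}_{BC}\to H^{2,0}_\partial$ is $D^{2,0}$, so $h^{2,0}_{BC}-h^{2,0}_\partial\leqslant d^{2,0}$. By Proposition~\ref{conjugation-symmetry} the map $\partial\colon C^{1,0}\to D^{2,0}$ is an isomorphism, and since $b^{1,0}=0$ (Lemma~\ref{first-b-vanish}) the first sequence of Lemma~\ref{Var_sequence} collapses to $0\to H^{1,0}_\partial\to H^{1,0}_{AE}\to C^{1,0}\to 0$, giving $d^{2,0}=c^{1,0}=h^{1,0}_{AE}-h^{1,0}_\partial\in\{0,1\}$ by the degree sequence of Theorem~\ref{exact_seq_AE}. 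Thus $\Delta^2\in\{0,2\}$.

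Next I would kill the boundary degrees. The $SL(2,\mathbb{H})$-symmetry $H^{p,0}_\partial\simeq H^{2n-p,0}_\partial$ together with the $H_{BC}$--$H_{AE}$ duality gives $\Delta^0=\Delta^4$, so it suffices to show $\Delta^0=0$, i.e.\ $h^{0,0}_{AE}=1$. If $\partial\partial_J f=0$ for a complex-valued function $f$, then $\partial_J f$ is $\partial$-closed and $\partial_J$-exact, so it represents a class in $B^{1,0}$; but $\Ima\partial\partial_J$ vanishes in bidegree $(1,0)$ and $B^{1,0}=0$ by Lemma~\ref{first-b-vanish}, so $\partial_J f=0$, forcing $f$ to be holomorphic and hence constant. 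Together with $\Delta^1=\Delta^3=0$ from Theorem~\ref{vanishing_degree}, this yields $\Delta^p=0$ for every $p\neq 2$.

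Finally I would tie $\Delta^2=0$ to the HKT condition through the $\partial\partial_J$-Lemma, which on a compact $SL(2,\mathbb{H})$-manifold is equivalent to it: HKT implies the Lemma by Theorem~\ref{HKTimplies}, while conversely the Lemma forces $D^{2,0}=0$, hence $c^{1,0}=d^{2,0}=0$ and $h^{1,0}_{AE}=h^{1,0}_\partial$; since $h^{1,0}_{AE}$ is even (the quaternionic structure induced by $J$), $h^{1,0}_\partial$ is even and $M$ is HKT by Theorem~\ref{result-mehdi-geo-misha}. Now Theorem~\ref{_bott_chern_aeppli_equa_} says the $\partial\partial_J$-Lemma holds exactly when all $\Delta^p$ vanish; combined with the vanishing of $\Delta^p$ for $p\neq 2$, this reads as $\Delta^2=0$, so $M$ is HKT if and only if $\Delta^2=0$. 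The main obstacle is not a single estimate but the localization itself: one must guarantee that the failure of the $\partial\partial_J$-Lemma, when present, registers only in the middle degree and contributes precisely $2$ there, which is exactly why the degeneration of the spectral sequence, the duality pairing (which both pins down the parity of $\Delta^2$ and handles the boundary degrees), and the bound $c^{1,0}\leqslant 1$ all have to be in place before the homological bookkeeping closes.
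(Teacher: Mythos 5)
Your proof is correct, and its first half --- the dichotomy $\Delta^2\in\{0,2\}$ --- is essentially the paper's argument in different packaging: the paper derives $0\leqslant h^{2,0}_{BC}-h^{2,0}_\partial\leqslant h^{1,0}_{AE}-h^{1,0}_\partial\leqslant 1$ from the exact sequence $0\to H^{1,0}_\partial\to H^{1,0}_{AE}\xrightarrow{\partial}H^{2,0}_{BC}\to H^{2,0}_\partial$ together with Theorem~\ref{exact_seq_AE}, while you extract the same inequality from Varouchas' sequences, the isomorphism $C^{1,0}\xrightarrow{\partial}D^{2,0}$ of Proposition~\ref{conjugation-symmetry}, and Lemma~\ref{first-b-vanish}; these are interchangeable, since the exactness of the paper's sequence rests on precisely the same ingredients. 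Where you genuinely diverge is the converse implication $\Delta^2=0\Rightarrow$ HKT. The paper stays in middle degree: from $\Delta^3=0$ it gets $a^{3,0}=0$, feeds this and the degeneration into Theorem~\ref{froh-lemma} to conclude $b^{3,0}=0$, hence $e^{2,0}=0$ by Proposition~\ref{conjugation-symmetry}, so $H^{2,0}_{BC}\to H^{2,0}_\partial$ is surjective, and injectivity follows from $h^{2,0}_{BC}=h^{2,0}_\partial$. You instead globalize: you verify the boundary degrees $\Delta^0=\Delta^4=0$ (your $B^{1,0}$-argument for $h^{0,0}_{AE}=1$ is fine, though you should also record the trivial facts $h^{0,0}_{BC}=1$ and $\dim E_2^{0,0}=1$, both immediate since $\partial$-closed functions are constant, as in the proof of Theorem~\ref{froelicher}), so that $\Delta^2=0$ combined with Theorem~\ref{vanishing_degree} gives $\Delta^p=0$ for all $p$, whence the full $\partial\partial_J$-Lemma by Theorem~\ref{_bott_chern_aeppli_equa_} and in particular $D^{2,0}=0$. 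Your endgame ($c^{1,0}=d^{2,0}=0$, so $h^{1,0}_{AE}=h^{1,0}_\partial$ is even, then Theorem~\ref{result-mehdi-geo-misha}) coincides with the paper's. Your route costs the extra boundary-degree check but buys a sharper statement that the paper leaves implicit: on a compact $SL(2,\mathbb{H})$-manifold the conditions HKT, $\partial\partial_J$-Lemma, and $\Delta^2=0$ are all equivalent, with any failure of the Lemma registered entirely in middle degree and of size exactly~$2$.
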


\begin{proof}
By Theorem~\ref{froelicher}, $E_1^{2,0} \simeq E_2^{2,0}$ on compact $SL(2,\mathbb{H})$-manifolds. Hence
$$\Delta^2 = h_{BC}^{2,0} + h_{AE}^{2,0} - 2 h^{2,0}_{\partial} = 2 \left( h_{BC}^{2,0} - h_\partial^{2,0} \right),$$
where the last equality comes from $SL(2,\mathbb{H})$-symmetry. Moreover, on a compact hypercomplex manifold the sequence
$$ 0 \xrightarrow{} H^{1,0}_\partial \xrightarrow{} H^{1,0}_{AE} \xrightarrow{\partial} H^{2,0}_{BC} \xrightarrow{} H^{2,0}_\partial$$
is exact. We deduce that
$$h^{1,0}_\partial - h^{1,0}_{AE} + h^{2,0}_{BC} - h^{2,0}_\partial + \dim_{\mathbb{C}} \text{coker}(H^{2,0}_{BC} \to H^{2,0}_{\partial}) =0.$$
Using Theorem~\ref{froh_global} and Theorem~\ref{exact_seq_AE} we get
\begin{equation*}%\label{inequalitiesHKT}
0 \leqslant \Delta^2 = 2 \left( h_{BC}^{2,0} - h_\partial^{2,0} \right) \leqslant 2 \left(h^{1,0}_{AE} - h^{1,0}_\partial \right) \leqslant 2.
\end{equation*}
The first part $\Delta^2 \in \{0,2\}$ follows because $\Delta^2$ is even. For the second part, recall that Theorem~\ref{HKTimplies} assures that compact $SL(2,\mathbb{H})$ HKT implies $\Delta^2=0$. Conversely, if we assume that $\Delta^2=0$, then~$h^{2,0}_{BC} = h^{2,0}_{\partial}$. Recall that Theorem~\ref{froelicher} implies $E_1^{2,0} \simeq E_2^{2,0}$ and that we get from Theorem~\ref{vanishing_degree} that $a^{3,0}=0$. We deduce from Theorem~\ref{froh-lemma} that $b^{3,0}=0$ and from Proposition~\ref{conjugation-symmetry} that $e^{2,0}=0$. Using Varouchas' exact sequences we conclude that the map $H^{2,0}_{BC} \to H^{2,0}_{\partial}$ is surjective and thus also injective. We then deduce from the above exact sequence that $h^{1,0}_{AE} = h^{1,0}_\partial$. As $J$ defines a quaternionic structure on $H^{1,0}_{AE}$, the dimension $h^{1,0}_{AE}$ is even and we conclude from Theorem~\ref{result-mehdi-geo-misha} that $M$ is HKT.
\end{proof}

\section{Quaternionic strongly Gauduchon metrics}\label{section10}

%%%%%%%%%%%%%%%%%%%%%%%%%%%%%%%%%%%%%%%%%%%%%%%%%%%%%%%%%%%%%%

By definition, any HKT metric is a quaternionic strongly Gauduchon metric. In this Section we show that, on compact $SL(2,\mathbb{H})$-manifolds, both notions are actually equivalent:
\begin{theorem}\label{SG-HKT}
A compact $SL(2,\mathbb{H})$-manifold is quaternionic strongly Gauduchon if and only if it is HKT.
\end{theorem}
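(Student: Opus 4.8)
The plan is to prove both implications, with the forward direction being immediate and the reverse direction carrying all the content. Since any HKT metric satisfies $\partial\Omega=0$, which is trivially $\partial_J$-exact, every HKT metric is in particular quaternionic strongly Gauduchon; this settles "HKT $\Rightarrow$ quaternionic strongly Gauduchon." For the converse, I would reduce everything to the dichotomy of Theorem~\ref{dichotomy}: since a compact $SL(2,\mathbb{H})$-manifold is HKT if and only if $\Delta^2=0$, and since $\Delta^2\in\{0,2\}$, it suffices to show that the existence of a quaternionic strongly Gauduchon metric forces $\Delta^2=0$. Tracing the proof of Theorem~\ref{dichotomy}, $\Delta^2=2\left(h^{2,0}_{BC}-h^{2,0}_\partial\right)$ and $\Delta^2\leqslant 2\left(h^{1,0}_{AE}-h^{1,0}_\partial\right)\leqslant 2$, so it is enough to prove $h^{1,0}_{AE}=h^{1,0}_\partial$. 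A strongly Gauduchon metric is in particular quaternionic Gauduchon, so Theorem~\ref{exact_seq_AE} applies and $h^{1,0}_{AE}=h^{1,0}_\partial$ is equivalent to the vanishing of the degree map $\deg\colon H^{1,0}_{AE}\to\mathbb{C}$, $\alpha\mapsto\int_M\partial\alpha\wedge\Omega\wedge\bar{\Phi}$, built from this very metric. Thus the whole theorem is reduced to a single assertion: the degree map attached to a strongly Gauduchon metric is identically zero.

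To attack this, I would exploit the defining relation $\partial\Omega=\partial_J\eta$ (strongly Gauduchon) for some $\eta\in\Lambda^{2,0}_I(M)$, together with $\partial\bar{\Phi}=\partial_J\bar{\Phi}=0$. Two successive integrations by parts (using $\int_M\partial\psi=\int_M\partial_J\psi=0$ on top-degree forms, as in the Remark following \eqref{hodge-adjoints}) give
\begin{equation*}
\deg(\alpha)=\int_M\partial\alpha\wedge\Omega\wedge\bar{\Phi}=\int_M\alpha\wedge\partial\Omega\wedge\bar{\Phi}=\int_M\alpha\wedge\partial_J\eta\wedge\bar{\Phi}=\int_M\partial_J\alpha\wedge\eta\wedge\bar{\Phi}.
\end{equation*}
Here one checks, using the Aeppli condition $\partial\partial_J\alpha=0$ and the anti-commutation $\partial\partial_J=-\partial_J\partial$, that $\partial_J\alpha$ is both $\partial$-closed and $\partial_J$-closed, while applying $\partial$ to $\partial_J\eta=\partial\Omega$ shows $\partial\partial_J\eta=0$; hence the final integral is exactly the Bott--Chern/Aeppli pairing \eqref{pairing} between $[\partial_J\alpha]_{BC}\in H^{2,0}_{BC}$ and $[\eta]_{AE}\in H^{2,0}_{AE}$.

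The hard part is showing this pairing vanishes, since $[\partial_J\alpha]_{BC}$ need not be trivial. My intended route is to bring in the $\ast_\Phi$-self-dual/anti-self-dual decomposition \eqref{dual_decomposition} on $\Lambda^{2,0}_I(M)$. Because $\ast_\Phi$ is an $h$-isometry, the symmetric wedge pairing $Q(\beta,\gamma)=\int_M\beta\wedge\gamma\wedge\bar{\Phi}$ is block-diagonal with respect to this splitting, so $\deg(\alpha)=Q\!\left((\partial_J\alpha)^+,\eta^+\right)+Q\!\left((\partial_J\alpha)^-,\eta^-\right)$. The decisive input should be the fact, already used in the proof of Theorem~\ref{froelicher}, that a $\partial_J$-exact $\ast_\Phi$-self-dual or $\ast_\Phi$-anti-self-dual $(2,0)$-form has vanishing $L^2$-norm and is therefore zero; combined with the real/imaginary decomposition \eqref{real_decomposition}, the relation \eqref{two_decompositions}, the reality $J\bar{\Omega}=\Omega$, and the $SL(2,\mathbb{H})$-symmetry, I expect the two diagonal contributions to cancel or vanish individually. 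Reconciling the $\partial_J$-exactness of the full form $\partial_J\alpha$ with the components $(\partial_J\alpha)^\pm$, which are not separately $\partial_J$-exact, is the principal obstacle, and it is exactly the point where the $SL(2,\mathbb{H})$ holonomy (hence the self-dual Hodge theory) is indispensable. As a fallback, should the self-duality bookkeeping resist, I would instead argue the vanishing of $\deg$ by a duality/separation argument in the spirit of Lamari and Popovici~\cite{pop}: the non-existence of a strongly Gauduchon metric with nonzero degree is equivalent, by Hahn--Banach applied to the quaternionic Gauduchon cone, to the non-surjectivity of $\deg$, which is precisely $h^{1,0}_{AE}=h^{1,0}_\partial$. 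Either way, once $\deg\equiv 0$ is established, the inequalities from Theorem~\ref{froh_global} force $\Delta^2=0$, and Theorem~\ref{dichotomy} (or Theorem~\ref{result-mehdi-geo-misha} via the evenness of $h^{1,0}_\partial$) yields that the manifold is HKT.
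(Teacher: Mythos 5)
Your reduction is sound as far as it goes: the easy direction is right, and your chain (sG metric is quaternionic Gauduchon, so Theorem~\ref{exact_seq_AE} applies; $h^{1,0}_{AE}=h^{1,0}_{\partial}$ iff $\deg\equiv 0$; then $\Delta^2=0$ via the inequalities in Theorem~\ref{dichotomy}) is a correct, essentially equivalent bookkeeping to the paper's, which instead shows that the map $H^{1,0}_{\partial}\xrightarrow{\partial_J}H^{2,0}_{BC}$ in Lemma~\ref{sequence2} vanishes, deduces $h^{1,0}_{BC}=h^{1,0}_{\partial}$, and concludes evenness and HKT-ness via Theorem~\ref{result-mehdi-geo-misha}. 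Your integration by parts identifying $\deg(\alpha)$ with the Bott--Chern/Aeppli pairing of $[\partial_J\alpha]_{BC}$ against $[\eta]_{AE}$ is also correct (up to sign). But the proof stops exactly where the theorem's content begins: you never prove the vanishing, and you say so yourself (``I expect the two diagonal contributions to cancel or vanish individually''). That expectation is unfounded. The $\ast_\Phi$-block-diagonality of the wedge pairing gives no cancellation here: $(\partial_J\alpha)^{\pm}$ are not separately $\partial_J$-exact, $\eta$ is an arbitrary potential with no self-duality constraint, and the lemma you want to import from the proof of Theorem~\ref{froelicher} applies only to forms that are simultaneously exact \emph{and} (anti-)self-dual, which is precisely what fails. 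The Hahn--Banach fallback is inverted: Lamari/Popovici-type separation arguments produce a metric from a cohomological positivity hypothesis; they do not extract $\deg\equiv 0$ from the mere existence of one strongly Gauduchon metric.

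What actually closes the gap in the paper is a two-step analytic upgrade that is absent from your proposal. First, for a $\partial$-closed $\alpha$, the sG condition gives $\int_M \partial_J\alpha\wedge\Omega\wedge\bar\Phi=0$ by the same integration by parts you perform; then elliptic theory (\cite[Theorem 4.6]{gra-lej-ver}) produces a function $f$ with $\left(\partial_J\alpha+\partial\partial_J f\right)\wedge\Omega\wedge\bar\Phi=0$, i.e.\ one can correct the representative \emph{within its class} so that $\partial_J\alpha\wedge\Omega=0$ holds \emph{pointwise} (primitivity), not merely in integral. Second, the quaternionic Hodge--Riemann bilinear relations \cite{_Verbitsky:balanced_} apply to this primitive form and yield $\|\partial_J\alpha\|_h^2=\int_M\partial_J\alpha\wedge\partial\left(J\bar\alpha\right)\wedge\bar\Phi=0$, whence $\partial_J\alpha=0$ identically. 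It is this passage from an integral identity to a pointwise one, followed by the sign-definiteness of the Hodge--Riemann form on primitive $(2,0)$-forms, that replaces your hoped-for self-duality cancellation; without these two inputs the key vanishing remains unproven, so the proposal as written does not establish the converse direction.
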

This is a quaternionic version of the following result:
\begin{theorem}\cite[Observation 4.4]{pop}
A compact complex surface is strongly Gauduchon if and only if it is K\"{a}hler.
\end{theorem}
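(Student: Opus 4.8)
The plan is to prove the nontrivial implication---that a compact complex surface $X$ carrying a strongly Gauduchon metric is K\"ahler---by a duality argument in the space of currents, the reverse implication being immediate. Indeed, if $\omega$ is K\"ahler then $d\omega=0$, so $\partial\omega^{n-1}=\partial\omega=0$ is trivially $\bar\partial$-exact and $\omega$ is strongly Gauduchon. Conversely, suppose $\omega$ is a strongly Gauduchon metric on $X$, i.e.~$\partial\omega=\bar\partial\beta$ for some $(2,0)$-form $\beta$ (here $\dim_{\mathbb C}X=2$, so $\omega^{n-1}=\omega$). One checks that $\tilde\Omega:=\omega-\beta-\bar\beta$ is then a real $d$-closed $2$-form whose $(1,1)$-component $\omega$ is positive definite. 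A naive attempt to upgrade $\tilde\Omega$ to a genuine K\"ahler form would try to solve $\partial\bar\partial\tau=\partial\omega$ and replace $\omega$ by $\omega-\bar\partial\tau-\partial\bar\tau$; since $\partial\omega=d\beta$ is simultaneously $d$-exact, $\partial$-closed and $\bar\partial$-closed, this amounts to a $\partial\bar\partial$-Lemma in bidegree $(2,1)$, which genuinely fails on non-K\"ahler surfaces. This failure is precisely the obstruction that the global argument below circumvents.

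I would therefore pass to the Hahn--Banach/current formulation, first endowing $X$ with a Gauduchon metric (which exists on every compact complex surface) in order to obtain the a priori mass bounds needed for weak-$\ast$ closedness of the relevant cones. On one hand, by the intrinsic characterisation of Harvey--Lawson, $X$ is K\"ahler if and only if there is no nonzero positive $(1,1)$-current $T$ that is a boundary, $T=dS$; writing $S=u+\bar u$ with $u$ a $(1,0)$-current, the requirement that $T$ be of pure type $(1,1)$ forces $\partial u=0$ and yields $T=\bar\partial u+\partial\bar u$. On the other hand, the Hahn--Banach separation of the strongly Gauduchon cone from its complement (Popovici's criterion, \cite{pop}) shows that $X$ admits a strongly Gauduchon metric if and only if there is no nonzero positive current $T$ of bidimension $(1,1)$ equal to the $(1,1)$-component of a boundary $dS$ with $S$ of degree $2n-3=1$; this gives $T=\bar\partial u+\partial\bar u$ with $u$ an \emph{arbitrary} $(1,0)$-current, the $(2,0)$- and $(0,2)$-parts of $dS$ being simply discarded.

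The key observation, special to surfaces, is the coincidence $n-1=1$: both characterisations now involve positive $(1,1)$-currents, and the cone of \emph{K\"ahler} obstruction currents $\{T\ge0:\ T=\bar\partial u+\partial\bar u,\ \partial u=0\}$ is contained in the cone of \emph{strongly Gauduchon} obstruction currents $\{T\ge0:\ T=\bar\partial u+\partial\bar u\}$, the former being cut out by the extra constraint $\partial u=0$. Hence, if $X$ is strongly Gauduchon---so that the larger cone contains no nonzero positive current---then a fortiori the smaller cone contains none either, and $X$ is K\"ahler. The main obstacle is analytic rather than formal: establishing the two current characterisations requires proving that the cones of obstruction currents are weak-$\ast$ closed, which is exactly where the Gauduchon metric and the compactness of $X$ provide the decisive mass estimates, and it requires keeping careful track of the $(2,0)/(0,2)$-components of $dS$ so that the containment of cones corresponds precisely to the coincidence $n-1=1$. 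Once these characterisations are in place, the set-theoretic inclusion of obstruction cones delivers the implication at once.
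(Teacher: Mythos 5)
This theorem is not proved in the paper at all: it is imported verbatim from Popovici~\cite[Observation 4.4]{pop}, so the relevant comparison is with Popovici's original argument, and your proposal does follow his strategy (Hahn--Banach duality with positive currents, plus the coincidence of bidegrees $(n-1,n-1)=(1,1)$ at $n=2$). However, there is a genuine gap in your formulation of the Harvey--Lawson criterion, and it sits exactly at the load-bearing step. Harvey--Lawson characterise K\"ahlerianity by the absence of nonzero positive $(1,1)$-currents that are $(1,1)$-\emph{components} of boundaries, i.e.\ $T=(dS)^{1,1}=\bar\partial u+\partial\bar u$ with $u$ an \emph{arbitrary} $(1,0)$-current --- not by the absence of positive currents that are boundaries $T=dS$ of pure type, which is what you state and from which you extract the extra constraint $\partial u=0$. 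Your version cuts out a strictly smaller obstruction cone, so the implication you then invoke, ``no nonzero positive $d$-exact $(1,1)$-current $\Rightarrow$ K\"ahler,'' is a strictly \emph{stronger} assertion than the Harvey--Lawson theorem: the hypothesis is weaker, so the conclusion does not follow from their result, and your formal inclusion of cones cannot supply it. (That stronger statement does happen to hold on compact surfaces, but only via Lamari-type dichotomy arguments, which your proof never develops.)

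The repair is immediate and in fact simplifies the argument: with Harvey--Lawson quoted correctly, the K\"ahler obstruction cone and the strongly Gauduchon obstruction cone on a surface are \emph{literally the same set} of currents --- positive $(1,1)$-currents of the form $\bar\partial u+\partial\bar u$, the $(2,0)$- and $(0,2)$-parts of $dS$ discarded in both cases --- so the equivalence follows from equality of cones, with no inclusion and no purity constraint anywhere. This equality, not a containment, is the actual content of Popovici's observation. It is also worth noting that the route parallel to this paper's own proof of the quaternionic analogue (Theorem~\ref{SG-HKT}) is entirely different and avoids currents: there the strongly Gauduchon hypothesis is used to kill the connecting map $H^{1,0}_{\partial}\xrightarrow{\partial_J}H^{2,0}_{BC}$ by an integration-by-parts and Hodge--Riemann argument, forcing $h^{1,0}_{\partial}$ to be even, after which the metric existence criterion of~\cite{gra-lej-ver} applies; the complex-surface shadow of that proof is ``strongly Gauduchon $\Rightarrow$ $b_1$ even $\Rightarrow$ K\"ahler'' via~\cite{buchdahl,lamari,miy,siu}, which outsources the hard analysis to the $b_1$-even theorem instead of to the weak-$\ast$ compactness of current cones.
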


To imitate D.~Popovici's proof in the complex realm, we will need the following result whose proof follows from Lemma~\ref{first-b-vanish}.

\begin{lemma}\label{sequence2}
On compact hypercomplex manifolds, the sequence
\begin{equation*}
0\longrightarrow H^{1,0}_{BC}\longrightarrow H^{1,0}_{\6}\xlongrightarrow{\partial_J} H^{2,0}_{BC}
\end{equation*}
is exact. 
%In particular, the map $g$ is surjective and hence the sequence
%\begin{equation*}
%H^{2n-2,0}_{AE}\xlongrightarrow{f} H^{2n-1,0}_{\6_J}\xlongrightarrow{g} H^{2n-1,0}_{AE}\longrightarrow 0
%\end{equation*}
%is exact.
\end{lemma}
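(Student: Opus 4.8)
The plan is to first pin down the two arrows. Since $\Lambda^{-1,0}_I(M)=0$, the Bott--Chern quotient degenerates at $p=1$, so $H^{1,0}_{BC}=\ker\partial\cap\ker\partial_J$ inside $\Lambda^{1,0}_I(M)$, with no quotient; the first map sends such a form to its class in $H^{1,0}_\partial=\ker\partial/\Ima\partial$. For the second arrow, if $\partial\alpha=0$ then $\partial(\partial_J\alpha)=-\partial_J\partial\alpha=0$ by anti-commutativity and $\partial_J(\partial_J\alpha)=0$, so $\partial_J\alpha\in\ker\partial\cap\ker\partial_J$ defines a class in $H^{2,0}_{BC}$. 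I would check that this is well defined on $\partial$-Dolbeault classes: an exact representative $\partial f$ maps to $\partial_J\partial f=-\partial\partial_J f\in\Ima\partial\partial_J$, hence to zero in $H^{2,0}_{BC}$.

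Next I would establish exactness at $H^{1,0}_\partial$, which I expect to be purely formal and to rely only on $\partial^2=\partial_J^2=0$ together with anti-commutativity. If $[\alpha]$ lies in the image of the first map, then $\alpha$ can be chosen $\partial_J$-closed, so $\partial_J\alpha=0$ and the image in $H^{2,0}_{BC}$ vanishes. Conversely, if $[\partial_J\alpha]=0$ in $H^{2,0}_{BC}$, then $\partial_J\alpha=\partial\partial_J g$ for some $g\in\Lambda^{0,0}_I(M)$. Replacing $\alpha$ by $\tilde\alpha:=\alpha+\partial g$ leaves the $\partial$-class unchanged, keeps $\partial\tilde\alpha=0$, and yields $\partial_J\tilde\alpha=\partial_J\alpha-\partial\partial_J g=0$; thus $\tilde\alpha$ represents a class in $H^{1,0}_{BC}$ mapping onto $[\alpha]$.

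The only substantive step, and the one where Lemma~\ref{first-b-vanish} must enter, is injectivity of $H^{1,0}_{BC}\to H^{1,0}_\partial$. A class in the kernel is represented by a form $\phi$ with $\partial\phi=\partial_J\phi=0$ that is moreover $\partial$-exact, i.e.\ $\phi\in\Ima\partial\cap\ker\partial_J$. Modulo $\Ima\partial\partial_J=0$ this is exactly the space $D^{1,0}$. By Proposition~\ref{conjugation-symmetry} one has $D^{1,0}\simeq B^{1,0}$, and Lemma~\ref{first-b-vanish} gives $b^{1,0}=0$; hence $\phi=0$ and the first map is injective. The main obstacle is therefore isolated precisely here: without the vanishing $b^{1,0}=0$ there could exist $\partial$-exact, $\partial_J$-closed $(1,0)$-forms that are not $\partial\partial_J$-exact, which would produce a nontrivial kernel and break exactness on the left. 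Everything else in the proof is formal diagram-chasing at the level of representatives.
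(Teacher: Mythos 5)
Your proof is correct and follows exactly the route the paper intends: the paper's proof consists of the single remark that the lemma ``follows from Lemma~\ref{first-b-vanish}'', and you correctly identify that everything except injectivity at $H^{1,0}_{BC}$ is formal, with the kernel of the first map being $D^{1,0}\simeq B^{1,0}=0$ by Proposition~\ref{conjugation-symmetry} and Lemma~\ref{first-b-vanish}. Your write-up simply makes explicit the diagram-chasing that the paper leaves implicit.
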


We have now all the tools to prove Theorem~\ref{SG-HKT}:

\begin{proof}
Suppose that $\Omega$ is a strongly Gauduchon metric such that $\6_J \Omega=\6\beta$ for some $\beta\in\Lambda_I^{2,0}(M)$.
We will prove that the map $H^{1,0}_\partial \xrightarrow{\partial_J} H^{2,0}_{BC}$ in the exact sequence of Lemma~\ref{sequence2} vanishes.
Let $\alpha$ be a representative of an element $\mathfrak{a}$ of $H^{1,0}_{\6}$:
\begin{eqnarray*}
\int_M \6_J\alpha\wedge\Omega\wedge\bar{\Phi}&=&-\int_M \alpha\wedge \6_J \Omega\wedge\bar{\Phi}\\
&=&-\int_M\alpha\wedge \6 \beta\wedge\bar{\Phi}\\
&=&\int_M \6\alpha\wedge\beta\wedge\bar{\Phi}=0.
\end{eqnarray*}
Using elliptic theory (see~\cite[Theorem 4.6]{gra-lej-ver}), there exists a real smooth function $f$
such that $\left(\6_J\alpha+\6\6_Jf\right)\wedge\Omega\wedge\bar{\Phi}=0$. This implies that we can choose
a representative of $\mathfrak{a}$ such that $\6_J \alpha \wedge \Omega \wedge \bar{\Phi} = 0$, or equivalently such that $\6_J\alpha\wedge\Omega=0.$
By a quaternionic version of the Hodge--Riemann relations~\cite{_Verbitsky:balanced_}:
\begin{equation*}
\|\6_J\alpha\|_h^2=\int_M \6_J\alpha \wedge \6 \left(J\bar\alpha\right)\wedge\bar{\Phi}=0.
\end{equation*}
Thus, the map $H^{1,0}_\partial \xrightarrow{\partial_J} H^{2,0}_{BC}$ in the exact sequence of Lemma~\ref{sequence2} vanishes. We deduce that $\dim H^{1,0}_{BC}=\dim H^{1,0}_{\6}.$ This implies that $\dim H^{1,0}_{\6}$ is even-dimensional and we conclude by Theorem~\ref{result-mehdi-geo-misha} that $M$ is HKT.
\end{proof}

\begin{remark}\label{non-existenceSG}
There are compact hypercomplex manifolds which do not admit a quaternionic strongly Gauduchon metric, see Example~1 in Section~\ref{section11}.
\end{remark}
%

%%%%%%%%%%%%%%%%%%%%%%%%%%%%%%%%%%%%%%%%%%%%%%%%%%%%%%%%%%%%%%

\section{Examples}\label{section11}

In this Section we will compute the quaternionic Bott--Chern and Aeppli cohomologies for some left-invariant hypercomplex structures on nilmanifolds.
%Quaternionic nilmanifolds provide a rich source of examples for hypercomplex manifolds with holonomy contained in $SL(n,\mathbb{H})$. Recall that Console and Fino proved that the Dolbeault cohomology of a nilmanifold with a left-invariant complex structure is isomorphic to the $\partial$-cohomology of the complex of left-invariant forms. Angella has extended this result, showing that Bott--Chern and Aeppli cohomologies of nilmanifolds can similarly be computed using invariant forms. 

Let $M=\Gamma\setminus G$ be a nilmanifold with $G$ a connected nilpotent Lie group equipped with a left-invariant hypercomplex structure $(I,J,K)$ and $\Gamma$ a discrete and co-compact subgroup of $G$.
%The hypercomplex structure descends to the quotient $N= \Gamma \setminus G$ and so $M$ is a \textsl{hypercomplex nilmanifold}.
As the hypercomplex structure descends to the quotient $M=\Gamma \setminus G$, the manifold $M$ is called a \textsl{hypercomplex nilmanifold}.
It has been proven in~\cite{_BDV:nilmanifolds_} that the holonomy of the Obata connection of a hypercomplex nilmanifold of real dimension $4n$ lies in $SL(n,\mathbb{H})$.
A way to construct such hypercomplex nilmanifolds is to consider a nilpotent Lie algebra  $\mathfrak{g}$ equipped with a left-invariant hypercomplex structure $(I,J,K)$
and rational structure constants. Then, by a result of Mal'\u{c}ev~\cite{Mal}, the corresponding simply-connected nilpotent Lie group $G$ admits a lattice $\Gamma$.  

%It is conjectured that the Dolbeault cohomology $$H^{p,q}_{\bar{\partial}}(M)= \frac{ \{ \varphi \in \Lambda^{p,q}_I(M)~|~\bar\partial \varphi = 0\} }{\{\varphi \in \Lambda^{p,q}_I(M)~|~\varphi = \bar\partial \beta\}}$$ of the complex nilmanifold $(M,I)$ with the left-invariant complex structure $I$
%is isomorphic to the Dolbeault cohomology $H^{p,q}_{\bar{\partial}}(\mathfrak{g}_\mathbb{C})$ of $G$-left-invariant forms, where
%$\mathfrak{g}_\mathbb{C}$ is the complexification of $\mathfrak{g}.$ 
%Nevertheless, the isomorphism has been proved in several cases (we refer the reader to~\cite{sak,cor-fer-gra-uga,con-fin,rol,rol-1}).
%In particular, when the complex structure is {\it{rational}}, S. Console and A. Fino~\cite{con-fin} proved the isomorphism. We recall that a complex structure $I$ is rational
%if $J\left(\mathfrak{g}_{\mathbb{Q}}\right)\subseteq \mathfrak{g}_{\mathbb{Q}},$ where $\mathfrak{g}_{\mathbb{Q}}$ is the rational structure induced by $\Gamma$ (see~\cite{con-fin}).
We briefly recall some known facts about Dolbeault cohomologies of complex nilmanifolds. Let $(M,I)$ be a complex nilmanifold with left-invariant complex structure $I$. Then it is conjectured that the Dolbeault cohomology $$H^{p,q}_{\bar{\partial}}(M)= \frac{ \{ \varphi \in \Lambda^{p,q}_I(M)~|~\bar\partial \varphi = 0\} }{\bar{\6} \Lambda^{p,q-1}_I(M)}$$ is isomorphic to the Dolbeault cohomology $H^{p,q}_{\bar{\partial}}(\mathfrak{g}_\mathbb{C})$ of $G$-left-invariant forms, where~$\mathfrak{g}_\mathbb{C}$ is the complexification of $\mathfrak{g}$ the Lie algebra of $G$. This isomorphism has been proven in several cases and we refer the reader
to~\cite{con-fin,cor-fer-gra-uga,rol,rol-1,sak}.
%In particular, when the complex structure is {\it{rational}}, S. Console and A. Fino~\cite{con-fin} proved the isomorphism. We recall that a complex structure $I$ is rational
%if $J\left(\mathfrak{g}_{\mathbb{Q}}\right)\subseteq \mathfrak{g}_{\mathbb{Q}},$ where $\mathfrak{g}_{\mathbb{Q}}$ is the rational structure induced by $\Gamma$ (see~\cite{con-fin}).
In particular, S.~Console and A.~Fino~\cite{con-fin} have proven the isomorphism when the complex structure is {\it{rational}}, i.e.~when $J\left(\mathfrak{g}_{\mathbb{Q}}\right)\subseteq \mathfrak{g}_{\mathbb{Q}},$ where $\mathfrak{g}_{\mathbb{Q}}$ is the rational structure induced by $\Gamma$ (see~\cite{con-fin}).

%Furthermore, D.~Angella proved that, if the De Rham and Dolbeault cohomologies of the manifold $(M,I)$ are isomorphic to the corresponding cohomologies of $G$-left-invariant forms, then the Bott--Chern and Aeppli cohomologies of $(M,I)$ can also be computed using only $G$-left-invariant forms~\cite{ang}.
%
%In the hypercomplex setting, we have first the following
\begin{corollary}\label{iso_del}
Let $M=\Gamma\setminus G$ be a compact hypercomplex nilmanifold of real dimension~$4n$ equipped with a left-invariant hypercomplex structure $(I,J,K)$. 
Suppose that the Dolbeault cohomology $H^{0,p}_{\bar{\partial}}(M)$ of the complex nilmanifold $(M,I)$
is isomorphic to the Dolbeault cohomology $H^{0,p}_{\bar{\partial}}(\mathfrak{g}_\mathbb{C})$ of $G$-left-invariant forms, for any $0\leqslant p\leqslant 2n.$
Then we have the isomorphisms
\begin{equation*}\label{isomorphism}
H^{p,0}_{\#}(\mathfrak{g}_\mathbb{C})\,{\simeq}\, H^{p,0}_{\#}(M),
\end{equation*}
where $\# \in \{\partial, \partial_J\}$. Under the same conditions, we also have the isomorphism
$$E_2^{p,0} (\mathfrak{g}_\mathbb{C}) \simeq E_2^{p,0}(M).$$
\end{corollary}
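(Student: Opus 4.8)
The plan is to reduce every quaternionic group in bidegree $(p,0)$ to an ordinary Dolbeault group $H^{0,p}_{\bar\partial}$, for which the hypothesis already supplies the isomorphism, and then to propagate this through the two symmetry operators $J$ and $\bar{J}$. Throughout I would work with the inclusion $\iota\colon \Lambda^{\bullet,\bullet}_I(\mathfrak{g}_\mathbb{C}) \hookrightarrow \Lambda^{\bullet,\bullet}_I(M)$ of left-invariant forms, and the whole argument turns on the fact that every operator in sight is left-invariant, so it commutes with $\iota$.

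The central observation is that $J$ intertwines the two differentials. From $\partial_J = J^{-1}\circ \bar\partial \circ J$ one reads off $J\circ \partial_J = \bar\partial \circ J$, so $J$ is a degreewise isomorphism of cochain complexes $(\Lambda^{\bullet,0}_I,\partial_J) \to (\Lambda^{0,\bullet}_I,\bar\partial)$ and therefore induces $H^{p,0}_{\partial_J} \simeq H^{0,p}_{\bar\partial}$. Since $J$ is left-invariant, this isomorphism commutes with $\iota$. Combining it with the standing hypothesis that $\iota$ induces an isomorphism $H^{0,p}_{\bar\partial}(\mathfrak{g}_\mathbb{C}) \simeq H^{0,p}_{\bar\partial}(M)$, a commutative-square argument (the two horizontal $J$-maps and the right-hand $\iota$ being isomorphisms) forces the left-hand $\iota$ to be an isomorphism, giving $H^{p,0}_{\partial_J}(\mathfrak{g}_\mathbb{C}) \simeq H^{p,0}_{\partial_J}(M)$. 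I would then invoke Proposition~\ref{conjugation-symmetry}, whose map $\bar{J}\colon \phi \mapsto J\bar{\phi}$ yields $H^{p,0}_{\partial} \simeq H^{p,0}_{\partial_J}$; as $\bar{J}$ is again left-invariant, the identical diagram chase transports the previous isomorphism to $H^{p,0}_{\partial}(\mathfrak{g}_\mathbb{C}) \simeq H^{p,0}_{\partial}(M)$. This disposes of both cases $\# \in \{\partial,\partial_J\}$.

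For the statement about $E_2^{p,0}$, I would observe that the step just completed provides a degreewise isomorphism $\iota\colon E_1^{\bullet,0}(\mathfrak{g}_\mathbb{C}) \simeq E_1^{\bullet,0}(M)$, since $E_1^{p,0} = H^{p,0}_{\partial}$. This isomorphism commutes with the induced differential $\partial_J$, because $\iota$ commutes with both $\partial$ and $\partial_J$ at the level of forms and hence respects the maps they induce on $H^{\bullet,0}_{\partial}$. Thus $\iota$ is an isomorphism of the $E_1$-complexes, and passing to cohomology gives $E_2^{p,0}(\mathfrak{g}_\mathbb{C}) \simeq E_2^{p,0}(M)$.

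The argument is essentially formal once the relation $J\partial_J = \bar\partial J$ and the left-invariance of $J$, $\bar{J}$ and the differentials are in place. The step I would treat as the main obstacle is purely a matter of care rather than depth: one must verify that every isomorphism used is genuinely induced by (equivalently, commutes with) the inclusion $\iota$, so that the hypothesis on $H^{0,p}_{\bar\partial}$ can legitimately be transferred. In particular one needs that the conjugation operator underlying $\bar{J}$ preserves the space of left-invariant forms, which holds because the real structure and the hypercomplex structure are both left-invariant.
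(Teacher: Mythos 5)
Your proposal is correct and takes essentially the same approach as the paper: where the paper identifies $H^{p,0}_{\partial}$ with $H^{0,p}_{\bar\partial}$ via complex conjugation and then applies the left-invariant map $\bar{J}$ of Proposition~\ref{conjugation-symmetry}, you use $J$ itself (via $J\circ\partial_J=\bar\partial\circ J$) first and $\bar{J}$ second, which is the same transport argument since $\bar{J}$ is $J$ composed with conjugation. Your direct check that the $\iota$-induced isomorphism of the complexes $\left(E_1^{\bullet,0},\partial_J\right)$ passes to cohomology is exactly what the paper's citation of McCleary's comparison theorem delivers in this single-complex setting.
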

\begin{proof}
Recall that complex conjugation gives an isomorphism between $H^{p,0}_{{\partial}}(M)$ (resp.~$H^{p,0}_{{\partial}}(\mathfrak{g}_\mathbb{C})$) and $H^{0,p}_{\bar{\partial}}(M)$
(resp.~$H^{0,p}_{\bar{\partial}}(\mathfrak{g}_\mathbb{C})$). Moreover,
Proposition~\ref{conjugation-symmetry} gives an isomorphism between $H^{p,0}_{{\partial}}(M)$ (resp.~$H^{p,0}_{{\partial}}(\mathfrak{g}_\mathbb{C})$) and $H^{p,0}_{{\partial}_J}(M)$
(resp.~$H^{p,0}_{{\partial_J}}(\mathfrak{g}_\mathbb{C})$) induced by the map $\bar{J}$.
To get the isomorphism for the quaternionic Fr\"{o}licher spectral sequence, note that the map of complexes $i : (\Lambda^{p,0}_I(\mathfrak{g}_\mathbb{C}), \partial, \partial_J) \to (\Lambda^{p,0}_I(M), \partial, \partial_J)$ induces a morphism of spectral sequences $i: (E_r^{p,0}(\mathfrak{g}_\mathbb{C}),\partial_J) \to (E_r^{p,0}(M),\partial_J)$. An isomorphism for the first page then leads to an isomorphism for all subsequent pages, see~\cite[Theorem 3.4]{mcc}.
\end{proof}

In the complex setup, D.~Angella proved that, if the De Rham and Dolbeault cohomologies of the complex nilmanifold $(M,I)$ are isomorphic to the corresponding cohomologies of $G$-left-invariant forms, then the Bott--Chern and Aeppli cohomologies of $(M,I)$ can also be computed using only $G$-left-invariant forms~\cite{ang}.
%Using a similar argument than in~\cite[Lemma 3.6]{ang} (see also~\cite{con-fin}), one can easily deduce that the quaternionic Bott--Chern cohomology
%$H^{p,0}_{BC}(\mathfrak{g}_\mathbb{C})$ can be injected in $H^{p,0}_{BC}(M)$.
In the quaternionic setup, the proof of injectivity from~\cite[Lemma 3.6]{ang} (see also~\cite{con-fin}) can readily be adapted. We deduce that the quaternionic Bott--Chern cohomology $H^{p,0}_{BC}(\mathfrak{g}_\mathbb{C})$ can be injected in $H^{p,0}_{BC}(M)$ and similarly for the Aeppli cohomology.
%Furthermore, we get that
The next result proves surjectivity in the special case when the hypercomplex nilmanifold has real dimension~8.
\begin{theorem}\label{iso_bott_aep}
Let $M=\Gamma\setminus G$ be a compact hypercomplex nilmanifold of real dimension~$8$ equipped with a left-invariant hypercomplex structure $(I,J,K)$. 
Suppose that the Dolbeault cohomology $H^{0,p}_{\bar{\partial}}(M)$ of the complex nilmanifold $(M,I)$
is isomorphic to the Dolbeault cohomology $H^{0,p}_{\bar{\partial}}(\mathfrak{g}_\mathbb{C})$ of $G$-left-invariant forms, for any $0\leqslant p\leqslant 4.$
%Then we have the isomorphisms
%\begin{equation*}\label{isomorphism}
%H^{p,0}_{BC}(\mathfrak{g}_\mathbb{C})\,{\simeq}\, H^{p,0}_{BC}(M).
%\end{equation*}
Then we have the isomorphisms
\begin{equation*}\label{isomorphism}
H^{p,0}_{\#}(\mathfrak{g}_\mathbb{C})\,{\simeq}\, H^{p,0}_{\#}(M),
\end{equation*}
where $\# \in \{BC, AE\}$.
\end{theorem}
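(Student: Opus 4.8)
The plan is to prove surjectivity of the injections $H^{p,0}_{BC}(\mathfrak{g}_\mathbb{C}) \hookrightarrow H^{p,0}_{BC}(M)$ and $H^{p,0}_{AE}(\mathfrak{g}_\mathbb{C}) \hookrightarrow H^{p,0}_{AE}(M)$ for $p = 0,1,2,3,4$. The key observation is that the hypotheses already grant us, via Corollary~\ref{iso_del}, isomorphisms for the quaternionic Dolbeault groups $H^{p,0}_{\partial}$, $H^{p,0}_{\partial_J}$ and for the spectral sequence pages $E_2^{p,0}$, both for $M$ and for $\mathfrak{g}_\mathbb{C}$. Since injectivity is already established in the text preceding the statement, the strategy is to show that the two finite-dimensional vector spaces have the \emph{same dimension}, which forces the injection to be an isomorphism. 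Thus the proof reduces to a dimension count.

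First I would exploit the numerical identity underlying the Fr\"{o}licher-type inequality. By the two exact sequences of Varouchas (Lemma~\ref{Var_sequence}) applied both on $M$ and on $\mathfrak{g}_\mathbb{C}$, together with the symmetries from Proposition~\ref{conjugation-symmetry}, one obtains (exactly as in the proof of Theorem~\ref{froh_global}) the relation
\begin{equation*}
h_{BC}^{p,0}+h_{AE}^{p,0} = 2\,h_{\partial}^{p,0}+a^{p,0}+f^{p,0}
\end{equation*}
valid in both settings. Since Corollary~\ref{iso_del} gives $h^{p,0}_{\partial}(\mathfrak{g}_\mathbb{C}) = h^{p,0}_{\partial}(M)$ for all $p$, the difference between the Bott--Chern-plus-Aeppli dimensions on $M$ and on $\mathfrak{g}_\mathbb{C}$ is governed entirely by the quantities $a^{p,0}+f^{p,0}$. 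The task therefore becomes controlling $a^{p,0}$ and $f^{p,0}$ in real dimension $8$, i.e.\ for $0 \le p \le 4$, and matching them across $M$ and $\mathfrak{g}_\mathbb{C}$.

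Next I would pin down the boundary values. For $p = 0$ the statement is immediate since every group in sight is one-dimensional (generated by constants). For $p=4$ one uses the $SL(2,\mathbb{H})$-duality pairing~\eqref{pairing}, which identifies $H^{4,0}_{BC}$ with $H^{0,0}_{AE}$ and $H^{4,0}_{AE}$ with $H^{0,0}_{BC}$, reducing the top degree to the bottom one. The middle degrees $p=1,2,3$ are where the real work lies: here I would invoke the degeneration of the quaternionic Fr\"{o}licher spectral sequence at the first page (Theorem~\ref{froelicher}), which holds on any compact $SL(2,\mathbb{H})$-manifold and hence gives $h^{p,0}_{\partial}=\dim E_2^{p,0}$, and combine it with the vanishing results $\Delta^1=\Delta^3=0$ (Theorem~\ref{vanishing_degree}) and the dichotomy $\Delta^2 \in \{0,2\}$ (Theorem~\ref{dichotomy}) to force $a^{p,0}$ and $f^{p,0}$ to be small or zero. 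In particular, the arguments in the proof of Theorem~\ref{vanishing_degree} already show $a^{3,0}=0$ and, by the $\bar J$-symmetry, control the low-degree terms as well.

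\textbf{The main obstacle} I anticipate is handling $p=2$, since this is precisely the degree where $\Delta^2$ can jump to $2$ (the non-HKT case), so $a^{2,0}+f^{2,0}$ need not vanish a priori and one cannot simply quote a $\Delta^p=0$ result. To close this case I would argue that the \emph{same} value of $\Delta^2$ is computed whether one works with $G$-left-invariant forms or with all smooth forms: the spectral sequence term $E_2^{2,0}$ is already known to agree by Corollary~\ref{iso_del}, so the quantity $h_{BC}^{2,0}+h_{AE}^{2,0}=2\dim E_2^{2,0}+\Delta^2$ agrees on $M$ and on $\mathfrak{g}_\mathbb{C}$ provided $\Delta^2(M)=\Delta^2(\mathfrak{g}_\mathbb{C})$. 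The cleanest route is to observe that whether $M$ is HKT is detected by the parity of $h^{1,0}_{\partial}$ (Theorem~\ref{result-mehdi-geo-misha}), and this Hodge number is invariant under passage to left-invariant forms by hypothesis; hence $\Delta^2(M)=\Delta^2(\mathfrak{g}_\mathbb{C})$ by Theorem~\ref{dichotomy}, which together with the established injectivity upgrades the $p=2$ injections to isomorphisms and completes the proof for all degrees.
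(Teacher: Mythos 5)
Your overall strategy --- establish termwise injectivity, then force surjectivity by matching the sums $h^{p,0}_{BC}+h^{p,0}_{AE}$ degree by degree --- is viable, and it genuinely differs from the paper's proof. For $p=0,1,3,4$ it goes through: injectivity gives $h^{p,0}_{BC}(\mathfrak{g}_\mathbb{C})+h^{p,0}_{AE}(\mathfrak{g}_\mathbb{C})\leqslant h^{p,0}_{BC}(M)+h^{p,0}_{AE}(M)$, while the identity $h^{p,0}_{BC}+h^{p,0}_{AE}=2h^{p,0}_{\partial}+a^{p,0}+f^{p,0}$ and the inequality of Theorem~\ref{froh_global} are purely algebraic (they use only Lemma~\ref{Var_sequence} and Proposition~\ref{conjugation-symmetry}, both of which hold verbatim on the invariant complex), so $\Delta^p(M)=0$ together with $a,f\geqslant 0$ forces the invariant sum to agree, and injectivity then splits the sum equality into the two desired isomorphisms. (Minor gloss: for $p=0$ the claim that ``every group in sight is one-dimensional'' needs, for $H^{0,0}_{AE}(M)$, that the kernel of $\partial\partial_J$ on functions consists of constants --- an elliptic/maximum-principle fact from~\cite{gra-lej-ver}, not an immediate one.)

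The genuine gap is at $p=2$ in the non-HKT case, precisely where you place the crux. The sentence ``hence $\Delta^2(M)=\Delta^2(\mathfrak{g}_\mathbb{C})$ by Theorem~\ref{dichotomy}'' is unjustified: Theorem~\ref{dichotomy} is a statement about compact $SL(2,\mathbb{H})$-\emph{manifolds}, proved with analytic input (Theorems~\ref{exact_seq_AE} and~\ref{result-mehdi-geo-misha}), and it computes $\Delta^2(M)$ only; it says nothing about the finite-dimensional complex $(\Lambda^{\ast,0}_I(\mathfrak{g}_\mathbb{C}),\partial,\partial_J)$, for which ``being HKT'' is not even the notion appearing in that theorem. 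The direction of the inequalities is the heart of the problem: if $\Delta^2(M)=0$, injectivity plus the invariant Fr\"olicher inequality do give $\Delta^2(\mathfrak{g}_\mathbb{C})=0$; but if $\Delta^2(M)=2$, injectivity yields only $\Delta^2(\mathfrak{g}_\mathbb{C})\leqslant 2$, and what you need is the \emph{lower} bound $\Delta^2(\mathfrak{g}_\mathbb{C})\geqslant 2$, i.e.\ a supply of invariant Bott--Chern/Aeppli classes --- which is exactly the surjectivity being proved, so invoking it is circular. The gap is fillable, but only by re-running the converse half of the dichotomy argument on the invariant complex: duality $h^{2,0}_{BC}(\mathfrak{g}_\mathbb{C})=h^{2,0}_{AE}(\mathfrak{g}_\mathbb{C})$ via the Hodge star of an invariant metric (so $\Delta^2(\mathfrak{g}_\mathbb{C})$ is even); $a^{3,0}(\mathfrak{g}_\mathbb{C})=0$ from $\Delta^3$; the purely algebraic Lemma~\ref{froh-lemma} and Proposition~\ref{conjugation-symmetry} to get $e^{2,0}(\mathfrak{g}_\mathbb{C})=0$; the exact sequence $0\to H^{1,0}_{\partial}(\mathfrak{g}_\mathbb{C})\to H^{1,0}_{AE}(\mathfrak{g}_\mathbb{C})\xrightarrow{\partial}H^{2,0}_{BC}(\mathfrak{g}_\mathbb{C})\to H^{2,0}_{\partial}(\mathfrak{g}_\mathbb{C})$ (injectivity at the first step is easy invariantly since invariant functions are constants); and evenness of $h^{1,0}_{AE}(\mathfrak{g}_\mathbb{C})$ via $\bar{J}$ --- so that $\Delta^2(\mathfrak{g}_\mathbb{C})=0$ would force $h^{1,0}_{\partial}(M)$ even, contradicting Theorem~\ref{result-mehdi-geo-misha}. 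None of this is in your proposal. Note that the paper sidesteps the problem entirely: it never compares $\Delta^2$, but instead shows each Varouchas space $D^{p,0}(M)$ is invariant-computable ($D^{1,0}$ via Lemma~\ref{first-b-vanish}, then $D^{2,0},D^{3,0},D^{4,0}$ by transporting $E^{1,0},E^{3,0},B^{3,0}$ through the isomorphisms of Proposition~\ref{conjugation-symmetry}), deduces the Bott--Chern case from the sequence $0\to D^{p,0}\to H^{p,0}_{BC}\to H^{p,0}_{\partial}$, and obtains Aeppli from the $\ast_\Phi$-duality square.
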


%\todo{I tried to write your proof somewhat differently, see below}
%\begin{proof}
%It follows from Lemma~\ref{first-b-vanish} that $H^{1,0}_{BC}(M)$ can be computed using $G$-left-invariant forms (this is actually true in any real dimension $4n$).
%Since~$M$ is a compact $SL(2,\mathbb{H})$-manifold, Theorem~\ref{vanishing_degree}
%implies that $a^{3,0}=f^{1,0}=0.$ From Varouchas' sequences (Lemma~\ref{Var_sequence}) and Corollary~\ref{iso_del},
%we deduce that the spaces~$B^{3,0}$ and~$E^{1,0}$ can be computed using $G$-left-invariant forms.
%Therefore, using Proposition~\ref{conjugation-symmetry}, we conclude the same for $D^{3,0}$ and $D^{2,0}$.  
%The Theorem follows from Varouchas' sequences.
%\todo{Well done!}
%\end{proof}

%\todo{I tried to write the proof somewhat differently for aestetic reasons. Your choice}
\begin{proof}
To prove the isomorphism for Bott--Chern cohomology, we use Varouchas' exact sequence
$$0 \to D^{p,0}(M) \to H^{p,0}_{BC}(M) \to H^{p,0}_\partial (M).$$
By hypothesis $H^{p,0}_\partial (\mathfrak{g}_\mathbb{C}) \simeq H^{p,0}_\partial (M)$, thus it is sufficient to prove that $D^{p,0}(M)$ can be computed using only $G$-left-invariant forms. It follows from Lemma~\ref{first-b-vanish} and Proposition~\ref{conjugation-symmetry} that $D^{1,0}(M)$, and hence also $H^{1,0}_{BC}(M)$, can be computed using only $G$-left-invariant forms (this is actually true in any real dimension $4n$). Since~$M$ is a compact $SL(2,\mathbb{H})$-manifold, Theorem~\ref{vanishing_degree} implies $f^{1,0}=f^{3,0}=a^{3,0}=0.$ 
We deduce from Varouchas' sequences (Lemma~\ref{Var_sequence}) and Corollary~\ref{iso_del} that the spaces~$E^{1,0}(M)$, $E^{3,0}(M)$ and~$B^{3,0}(M)$ can be computed using only $G$-left-invariant forms.
Therefore, using Proposition~\ref{conjugation-symmetry}, we conclude the same for $D^{2,0}(M)$, $D^{3,0}(M)$ and $D^{4,0}(M).$
Finally, we point out that $D^{0,0}(M)$ is trivial.
%\todo{say something about $D^{0,0}$ and $D^{4,0}$ or trivial?}

For the isomorphism of Aeppli cohomology, note that the Hodge star operator associated to an invariant metric
$\ast_{\Phi} : \Lambda^{p,0}_I(\mathfrak{g}_\mathbb{C}) \to \Lambda^{2n-p,0}_I(\mathfrak{g}_\mathbb{C})$ induces an isomorphism $H^{p,0}_{BC}(\mathfrak{g}_\mathbb{C}) \simeq H^{2n-p,0}_{AE}(\mathfrak{g}_\mathbb{C})$ in cohomology. %Indeed, choosing a left-invariant quaternionic Hermitian metric $g$, the spaces $H^{p,0}_{BC}(\mathfrak{g}_\mathbb{C})$ and $H^{p,0}_{AE}(\mathfrak{g}_\mathbb{C})$ can both be written as Kernels of self-adjoint operators. 
As we have already proven the isomorphism for the quaternionic Bott--Chern cohomology, the commutative diagram
$$\begin{CD}
H^{p,0}_{AE}(\mathfrak{g}_\mathbb{C})     @> >> H^{p,0}_{AE}(M)  \\
@VV \simeq V        @VV \simeq V\\
H^{4-p,0}_{BC}(\mathfrak{g}_\mathbb{C})     @> \simeq >>  H^{4-p,0}_{BC}(M)
\end{CD}$$
gives the isomorphism for quaternionic Aeppli cohomology, see~\cite[Remark 1.40, Remark 1.41]{ang-thesis}.
\end{proof}

\begin{remark}
If, on a compact hypercomplex nilmanifold, both $H^{p,0}_{BC}(M)$ and $H^{p,0}_{\partial}(M)$ can be computed using only $G$-left-invariant forms for $0\leqslant p\leqslant 4$, then $A^{p,0}(M), B^{p,0}(M), C^{p,0}(M), D^{p,0}(M), E^{p,0}(M)$ and~$F^{p,0}(M)$ can also be computed using only $G$-left-invariant forms. Moreover, since~$H^{\bar{J},+}_{\6}$ and $H^{\bar{J},-}_{\6}$
are both subgroups of $H^{2,0}_{\partial}(M)$ then, under the same hypothesis, both can be computed using $G$-left-invariant forms, too.
\end{remark}

All the complex structures considered in the examples below are rational. Hence, we may apply Console--Fino's result~\cite{con-fin}
and therefore Corollary~\ref{iso_del} and Theorem~\ref{iso_bott_aep} to compute the dimensions of the quaternionic cohomologies of the given examples.

%How to prove this: Step 1, show that the map is injective. Step 2: show that
%$$\dim \left(H^{p,0}_\partial (\mathfrak{g}^*)\right)^{\perp_g} =0$$
%where orthogonality is meant with respect to a left-invariant quaternionic Hermitian metric $g$ on $N$.
%
%\begin{proof}
%Fix $p \in \mathbb{N}$. Let $g$ be a left-invariant quaternionic Hermitian metric on $N$ and consider the induced inner product $\langle \cdot, \cdot \cdot \rangle$ on $\Lambda^{p,0}_I(N)$. Do we get a Hodge decomposition at the level of left-invariant forms?
%$$\Lambda^{p,0}_I(\mathfrak{g}^*) = \ker {\Delta_\partial}_{\restriction \Lambda^{p,0}_I(\mathfrak{g}^*)}+ \Ima \partial_{\restriction \Lambda^{p,0}_I(\mathfrak{g}^*)} + \Ima \partial^*_{\restriction \Lambda^{p,0}_I(\mathfrak{g}^*)}$$
%Je vais continuer ˆ taper en LaTeX quand je vois plus ou moins comment marche la preuve :)
%\end{proof}
%If the nilpotent Lie algebra is 2-step, then the Lie bracket is an associative product.

%%%%%%%%%%%%%%%%%%%%%%%%%%%%%%%%%%%%%%%%%%%%%%%%%%%%%%%%%%%%%%

\subsection{Example 1}

%%%%%%%%%%%%%%%%%%%%%%%%%%%%%%%%%%%%%%%%%%%%%%%%%%%%%%%%%%%%%%

The first Example we consider is a hypercomplex nilmanifold of real dimension 8. It is built out of a central extension of the quaternionic Heisenberg algebra $\mathbb{R} \times H_7$ and already appeared in~\cite{fin-gra}. We will see that it is an $SL(2,\mathbb{H})$-manifold which does not admit any HKT metric. The structure equations of the Lie algebra in question are:
$$\left\{
\begin{array}{l}
de^1 = de^2= de^3 =de^4 = de^5 =0 \\
de^6= e^1 \wedge e^2 + e^3 \wedge e^4 \\
de^7= e^1 \wedge e^3 + e^4 \wedge e^2 \\
de^8= e^1 \wedge e^4 + e^2 \wedge e^3
\end{array}
\right.$$
We consider the following hypercomplex structure:
\begin{equation*}
\begin{array}{ccccccc}
I e^1= e^2 & \quad & I e^3 = e^4 & \quad & I e^5 = e^6 & \quad & I e^7 = e^8 \\
J e^1= e^3 & \quad & J e^2 = - e^4 & \quad & J e^5 = e^7 & \quad & J e^6 = - e^8 
\end{array}
\end{equation*}
A basis of left-invariant $(1,0)$-forms (with respect to $I$) is given by:
\begin{equation*}
\begin{array}{ccccccc}
\varphi^1 = e^1 - i e^2 & \quad & \varphi^2 = e^3- i e^4 & \quad & \varphi^3 = e^5 - i e^6 & \quad & \varphi^4 = e^7 - i e^8
\end{array}
\end{equation*}
If we express the structure equations in terms of the $(1,0)$-forms we get:
\begin{equation*}
\begin{array}{ccccccc}
d\varphi^1 = 0 & \quad & d\varphi^2 = 0 & \quad & d\varphi^3 = - \frac{1}{2} \left( \varphi^{1\bar{1}} + \varphi^{2\bar{2}} \right) & \quad & d\varphi^4 = \varphi^{12}
\end{array}
\end{equation*}
where we use the notations $\varphi^{12} = \varphi^1 \wedge \varphi^2$ and $\varphi^{\bar{1}} = \bar{\left(\varphi^1\right)} = e^1 + i e^2$.
We see that 
%$d\varphi^{i} \subseteq \Lambda^{2,0}_I \oplus \Lambda^{1,1}_I$, so the almost-complex structure $I$ is integrable. On the other hand, 
$d\varphi^{4} \not\subseteq \Lambda^{1,1}_I$ and hence the hypercomplex structure is not {\it{abelian}}~\cite{fin-gra}. If we rewrite the structure equations in terms of the differentials $\partial$ and $\partial_J$ we get:
\begin{equation*}
\begin{array}{rcrclclcc}
\partial \varphi^1 = 0 & \quad & \partial \varphi^2 = 0 & \quad & \partial \varphi^3 = 0 & \quad & \partial \varphi^4 = \varphi^{12} \\
\partial_J \varphi^1 = 0 & \quad & \partial_J \varphi^2 = 0 & \quad & \partial_J \varphi^3 = \varphi^{12} & \quad & \partial_J \varphi^4 = 0
\end{array}
\end{equation*}
%\begin{center}
%\begin{tabular}{| c || c | c | c | c |}
%\hline
%$(p,0)$ & $H^{p,0}_{\partial}$ & $H^{p,0}_{\partial_J}$ & $H^{p,0}_{BC}$ & $H^{p,0}_{AE}$ \\
%\hline
%\hline
%$(0,0)$ & $1$& $1$ & $1$ & $1$ \\
%\hline
%$(1,0)$ & $\varphi^1, \varphi^2, \varphi^3$ & $\varphi^1, \varphi^2, \varphi^4 $ & $\varphi^1, \varphi^2$ & $\varphi^1, \varphi^2, \varphi^3, \varphi^4$\\
%\hline
%$(2,0)$ & $\varphi^{13}, \varphi^{14}, \varphi^{23}, \varphi^{24}$ & $\varphi^{13}, \varphi^{14}, \varphi^{23}, \varphi^{24}$ & $\varphi^{12}, \varphi^{13}, \varphi^{14}, \varphi^{23}, \varphi^{24}$ & $\varphi^{13}, \varphi^{14}, \varphi^{23}, \varphi^{24}, \varphi^{34}$\\
%\hline
%$(3,0)$ & $\varphi^{124}, \varphi^{134}, \varphi^{234}$ & $\varphi^{123}, \varphi^{134}, \varphi^{234}$ & $\varphi^{123}, \varphi^{124}, \varphi^{134}, \varphi^{234}$ & $\varphi^{134}, \varphi^{234}$\\
%\hline
%\end{tabular}
%\end{center}
%
%\hfill
%
%\begin{center}
%\begin{tabular}{| c || c | c | c | c | c | c |}
%\hline
%$(p,0)$ & $A^{p,0}$ & $B^{p,0}$ & $C^{p,0}$ & $D^{p,0}$ & $E^{p,0}$ & $F^{p,0}$ \\
%\hline
%\hline
%$(0,0)$ & $0$& $0$ & $0$ & $0$ & $0$ & $0$ \\
%\hline
%$(1,0)$ & $0$ & $0$ & $\varphi^3$ & $0$ & $\varphi^4$ & $0$ \\
%\hline
%$(2,0)$ & $\varphi^{12}$ & $\varphi^{12}$ & $\varphi^{34}$ & $\varphi^{12}$ & $\varphi^{34}$ & $\varphi^{34}$ \\
%\hline
%$(3,0)$ & $0$ & $\varphi^{123}$ & $0$ & $\varphi^{124}$ & $0$ & $0$ \\
%\hline
%\end{tabular}
%\end{center}
This leads to the following tables:

\begin{center}
\begin{tabular}{| c || c | c | c | c |}
\hline
$(p,0)$ & $h_\partial^{p,0}$ & $h^{p,0}_{\partial_J}$ & $h^{p,0}_{BC}$ & $h^{p,0}_{AE}$ \\
\hline
\hline
$(1,0)$ & $3$ & $3$ & $2$ & $4$\\
\hline
$(2,0)$ & $4$ & $4$ & $5$ & $5$\\
\hline
$(3,0)$ & $3$ & $3$ & $4$ & $2$\\
\hline
\end{tabular}
\end{center}

\hfill

\begin{center}
\begin{tabular}{| c || c | c | c | c | c | c |}
\hline
$(p,0)$ & $a^{p,0}$ & $b^{p,0}$ & $c^{p,0}$ & $d^{p,0}$ & $e^{p,0}$ & $f^{p,0}$ \\
\hline
\hline
$(1,0)$ & $0$ & $0$ & $1$ & $0$ & $1$ & $0$ \\
\hline
$(2,0)$ & $1$ & $1$ & $1$ & $1$ & $1$ & $1$ \\
\hline
$(3,0)$ & $0$ & $1$ & $0$ & $1$ & $0$ & $0$ \\
\hline
\end{tabular}
\end{center}
We point out the following observations.
Firstly, one checks that, in accordance with Theorem~\ref{froelicher}, the quaternionic Fr\"{o}licher spectral sequence degenerates at the first page.
Secondly, in accordance with Theorem~\ref{pure-and-full}, the hypercomplex structure is $C^\infty$-pure-and-full. Indeed, the space $H^{2,0}_\partial = \frac{\langle \varphi^{12}, \varphi^{13}, \varphi^{14}, \varphi^{23}, \varphi^{24} \rangle}{\langle \varphi^{12} \rangle}$ decomposes as a direct sum of 
$$H^{\bar{J},+}_\partial =  \frac{\langle \varphi^{12}, \varphi^{13} + \varphi^{24}, \varphi^{14} - \varphi^{23} \rangle}{\langle \varphi^{12} \rangle} \quad \text{ and } \quad H^{\bar{J},-}_\partial =  \frac{\langle \varphi^{12}, \varphi^{13} - \varphi^{24}, \varphi^{14} + \varphi^{23} \rangle}{\langle \varphi^{12} \rangle}.$$
Thirdly, as predicted by Theorem~\ref{vanishing_degree}, we have $\Delta^1=\Delta^3=0$. Lastly, the second non-HKT-ness degree is non-zero $\Delta^2=2$ and hence, by Theorem~\ref{dichotomy}, this hypercomplex nilmanifold does not admit any HKT metric.
%
%\begin{remark}
%Note that, in accordance with Theorem~\ref{pure-and-full}, the hypercomplex structure is $C^\infty$-pure-and-full because the space $H^{2,0}_\partial = \frac{\langle \varphi^{12}, \varphi^{13}, \varphi^{14}, \varphi^{23}, \varphi^{24} \rangle}{\langle \varphi^{12} \rangle}$ decomposes as a direct sum of 
%$$H^{\bar{J},+}_\partial =  \frac{\langle \varphi^{12}, \varphi^{13} + \varphi^{24}, \varphi^{14} - \varphi^{23} \rangle}{\langle \varphi^{12} \rangle} \quad \text{ and } \quad H^{\bar{J},-}_\partial =  \frac{\langle \varphi^{12}, \varphi^{13} - \varphi^{24}, \varphi^{14} + \varphi^{23} \rangle}{\langle \varphi^{12} \rangle}.$$
%Moreover, in accordance with Theorem~\ref{froelicher}, one checks that the quaternionic Fr\"{o}licher spectral sequence degenerates at the first page.
%\end{remark}
%he non-HKT-ness degrees are: $\Delta^1=\Delta^3=0$ and $\Delta^2=2$.

%%%%%%%%%%%%%%%%%%%%%%%%%%%%%%%%%%%%%%%%%%%%%%%%%%%%%%%%%%%%%

\subsection{Example 2}

%%%%%%%%%%%%%%%%%%%%%%%%%%%%%%%%%%%%%%%%%%%%%%%%%%%%%%%%%%%%%

The second Example is also based upon the central extension of the quaternionic Lie algebra $\mathbb{R} \times H_7$ but now we consider a path of hypercomplex structures as done in~\cite{fin-gra} and~\cite{gra-lej-ver}. We end up with an $SL(2,\mathbb{H})$-manifold carrying a family $t\in (0,1)$ of hypercomplex structures which is HKT for $t=\frac{1}{2}$ but not HKT for all other values of $t$. The structure equations of the Lie algebra are the same as before:
$$\left\{
\begin{array}{l}
de^1 = de^2= de^3 =de^4 = de^5 =0 \\
de^6= e^1 \wedge e^2 + e^3 \wedge e^4 \\
de^7= e^1 \wedge e^3 + e^4 \wedge e^2 \\
de^8= e^1 \wedge e^4 + e^2 \wedge e^3
\end{array}
\right.$$
We consider the family of hypercomplex structures $(I_t,J_t,K_t)$ parametrised by $t \in (0,1)$:
\begin{equation*}
\begin{array}{ccccccc}
I_t e^1= \frac{t-1}{t} e^2 & \quad & I_t e^3 = e^4 & \quad & I_t e^5 = \frac{1}{t} e^6 & \quad & I_t e^7 = e^8 \\
J_t e^1= \frac{t-1}{t} e^3 & \quad & J_t e^2 = - e^4 & \quad & J_t e^5 = \frac{1}{t} e^7 & \quad & J_t e^6 = - e^8 
\end{array}
\end{equation*}
A basis of left-invariant $(1,0)$-forms is given by:
\begin{equation*}
\begin{array}{ccccccc}
\varphi^1 = e^1 - i \frac{t-1}{t} e^2 & \quad & \varphi^2 = e^3- i e^4 & \quad & \varphi^3 = e^5 - i \frac{1}{t} e^6 & \quad & \varphi^4 = e^7 - i e^8
\end{array}
\end{equation*}
The structure equations become:
\begin{equation*}
\begin{array}{ccccccc}
d\varphi^1 = 0 & \quad & d\varphi^2 = 0 & \quad & d\varphi^3 =  \frac{1}{2(1-t)} \varphi^{1\bar{1}} - \frac{1}{2t} \varphi^{2\bar{2}} & \quad & d\varphi^4 = \frac{2t-1}{2t-2} \varphi^{12} - \frac{1}{2t-2} \varphi^{\bar{1}2}
\end{array}
\end{equation*}
%As $d\varphi^{i} \subseteq \Lambda^{2,0}_I \oplus \Lambda^{1,1}_I$ the almost-complex structure $I$ is integrable. 
If $t =\frac{1}{2}$, then $d\varphi^{i} \subseteq \Lambda^{1,1}_I$ and the complex structure is abelian whereas otherwise it is not.
In terms of the differentials $\partial$ and $\partial_J$, the structure equations can be written as:
\begin{equation*}
\begin{array}{rcrclclcc}
\partial \varphi^1 = 0 & \quad & \partial \varphi^2 = 0 & \quad & \partial \varphi^3 = 0 & \quad & \partial \varphi^4 = \frac{2t-1}{2(t-1)} \varphi^{12} \\
\partial_J \varphi^1 = 0 & \quad & \partial_J \varphi^2 = 0 & \quad & \partial_J \varphi^3 = \frac{2t-1}{2t(t-1)}\varphi^{12} & \quad & \partial_J \varphi^4 = 0
\end{array}
\end{equation*}
We conclude: if $t\neq \frac{1}{2}$ then we get the same tables as in Example 1. On the other hand, if $t=\frac{1}{2}$ then both $\partial \varphi^4=0$ and $\partial_J \varphi^3=0$. Consequently the spaces $A^{p,0}, B^{p,0}, C^{p,0}, D^{p,0}, E^{p,0}$ and $F^{p,0}$ are trivial while
$$h^{p,0}_{\partial} = h^{p,0}_{\partial_J} = h^{p,0}_{BC}= h^{p,0}_{AE} = {4 \choose p}.$$
We deduce that, just as the HKT property, the $\partial \partial_J$-Lemma is not stable:
\begin{corollary}\cite{fin-gra}
The $\partial \partial_J$-Lemma is not stable by small hypercomplex deformations.
\end{corollary}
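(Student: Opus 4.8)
The plan is to deduce the statement directly from the cohomological computations of Example~2, via the characterisation of the $\partial\partial_J$-Lemma recorded just after its definition: the $\partial\partial_J$-Lemma holds for $(p,0)$-forms if and only if $B^{p,0}\simeq 0$ (equivalently $D^{p,0}\simeq 0$). Consequently the whole question reduces to tracking the single dimension $b^{3,0}$ as the parameter $t$ ranges over $(0,1)$.

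First I would isolate the distinguished value $t=\frac{1}{2}$. There the structure equations degenerate to $\partial\varphi^4=0$ and $\partial_J\varphi^3=0$, so that all of the Varouchas spaces $A^{p,0},\dots,F^{p,0}$ are trivial; in particular $B^{p,0}\simeq 0$ for every $p$. Hence the $\partial\partial_J$-Lemma holds for the hypercomplex structure $(I_{1/2},J_{1/2},K_{1/2})$. (Alternatively, this structure is HKT, so the Lemma follows at once from Theorem~\ref{HKTimplies}.)

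Next I would treat any $t\neq\frac{1}{2}$ in $(0,1)$. For such parameters the computation reproduces the tables of Example~1, in which $b^{3,0}=1$. Thus $B^{3,0}\not\simeq 0$ and the $\partial\partial_J$-Lemma fails on $(3,0)$-forms for every $t\neq\frac{1}{2}$. Since $(I_t,J_t,K_t)$ is a polynomial, hence real-analytic, family, the structures with $t\neq\frac{1}{2}$ are arbitrarily small deformations of the central one $(I_{1/2},J_{1/2},K_{1/2})$. We have therefore produced a hypercomplex structure satisfying the $\partial\partial_J$-Lemma that admits deformations, arbitrarily close to it, violating the Lemma; equivalently, validity of the Lemma is not an open condition in the deformation parameter.

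The argument involves no genuine analytic difficulty, as every cohomology dimension has already been computed explicitly. The one conceptual point to make precise --- and the closest thing to an obstacle --- is that stability is an openness statement, and what defeats it here is the jump of $b^{3,0}$ from $0$ at $t=\frac{1}{2}$ to $1$ for all neighbouring $t$; this discontinuity of the Bott--Chern data is exactly the failure of semicontinuity that obstructs persistence of the $\partial\partial_J$-Lemma under small hypercomplex deformations.
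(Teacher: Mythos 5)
Your proposal is correct and follows essentially the same route as the paper: the authors likewise deduce the corollary from the Example~2 computations, noting that at $t=\tfrac12$ all the Varouchas spaces $A^{p,0},\dots,F^{p,0}$ vanish (so the $\partial\partial_J$-Lemma holds, consistently with the structure being HKT), while for $t\neq\tfrac12$ the tables coincide with those of Example~1, where $b^{p,0}\neq 0$ and the Lemma fails. Your only additions --- invoking Theorem~\ref{HKTimplies} as an alternative at $t=\tfrac12$ and spelling out the openness formulation --- are harmless elaborations of the same argument.
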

This differs from the complex case where the $\partial \bar{\partial}$-Lemma is stable by small complex deformations~\cite{_Angella-Tomassini_, voisin, wu}. Note that we can also use this Example to show that, even on compact $SL(2,\mathbb{H})$-manifolds, the second non-HKT-ness degree is not stable by small hypercomplex deformations, whereas on compact complex surfaces, the second non-K\"{a}hler-ness degree is a topological invariant, see~\cite[Corollary 1.2]{ang-tom-ver}.

%%%%%%%%%%%%%%%%%%%%%%%%%%%%%%%%%%%%%%%%%%%%%%%%%%%%%%%%%%%%%

\subsection{Example 3}

%%%%%%%%%%%%%%%%%%%%%%%%%%%%%%%%%%%%%%%%%%%%%%%%%%%%%%%%%%%%%

The third Example we consider is an $SL(3,\mathbb{H})$-manifold whose hypercomplex structure is neither $C^\infty$-pure nor $C^\infty$-full. The structure equations of the nilpotent Lie algebra in question are
$$\left\{
\begin{array}{ccl}
de^i &=& 0 \quad \text{ for all }1 \leqslant i \leqslant 8\\
de^9 &=& e^1 \wedge e^3  \\
de^{10} &=& e^1 \wedge e^4 \\
de^{11} &=& e^1 \wedge e^7 \\
de^{12} &=& e^1 \wedge e^8
\end{array}
\right.$$
Consider the following hypercomplex structure
\begin{equation*}
\begin{array}{lclcl}
I e^1= e^2 & \quad & I e^5 = e^6 & \quad & I e^9 = e^{10} \\
I e^3 = e^4 & \quad & I e^7 = e^8 & \quad & I e^{11} = e^{12} \\
&&&& \\
J e^1= e^5 & \quad & J e^3 = e^7 & \quad & J e^9 = e^{11} \\
J e^2 = -e^6 & \quad & J e^4 = -e^8 & \quad & J e^{10} = - e^{12}
\end{array}
\end{equation*}
A basis of left-invariant $(1,0)$-forms (with respect to $I$) is given by:
\begin{equation*}
\begin{array}{cclcl}
\varphi^1 = e^1 - i e^2 & \quad & \varphi^3 = e^5- i e^6 & \quad & \varphi^5 = e^9 - i e^{10} \\
\varphi^2 = e^3 - i e^4 & \quad & \varphi^4 = e^7- i e^8 & \quad &\varphi^6= e^{11} - i e^{12}
\end{array}
\end{equation*}
This leads to the equations
\begin{equation*}
\begin{array}{rcrclr}
\partial\varphi^1 = 0 & \quad & \partial\varphi^3 = 0 & \quad & \partial\varphi^5 = \frac{1}{2}\varphi^{12} \\
\partial\varphi^2 = 0 & \quad & \partial\varphi^4 =0 & \quad & \partial\varphi^6 = \frac{1}{2}\varphi^{14} \\
&&&&& \\
\partial_J\varphi^1 = 0 & \quad & \partial_J\varphi^3 = 0 & \quad & \partial_J\varphi^5 = \frac{1}{2}\varphi^{23} \\
\partial_J\varphi^2 = 0 & \quad & \partial_J\varphi^4 =0 & \quad & \partial_J\varphi^6 = - \frac{1}{2}\varphi^{34}  
\end{array}
\end{equation*}
%which lead to the tables:
%\begin{center}
%\begin{tabular}{| c || c | c | c | c |}
%\hline
%$(p,0)$ & $h^{p,0}_{\partial}$ & $h^{p,0}_{\partial_J}$ & $h^{p,0}_{BC}$ & $h^{p,0}_{AE}$ \\
%\hline
%\hline
%$(1,0)$ & $4$ & $4$ & $4$ & $6$\\
%\hline
%$(2,0)$ & $9$ & $9$ & $9$ & $10$\\
%\hline
%$(3,0)$ & $12$ & $12$ & $14$ & $14$\\
%\hline
%$(4,0)$ & $9$ & $9$ & $10$ & $9$\\
%\hline
%$(5,0)$ & $4$ & $4$ & $6$ & $4$\\
%\hline
%\end{tabular}
%%\end{center}
%
%\hfill
%
%%\begin{center}
%\begin{tabular}{| c || c | c | c | c | c | c |}
%\hline
%$(p,0)$ & $a^{p,0}$ & $b^{p,0}$ & $c^{p,0}$ & $d^{p,0}$ & $e^{p,0}$ & $f^{p,0}$ \\
%\hline
%\hline
%$(1,0)$ & $0$ & $0$ & $2$ & $0$ & $2$ & $2$ \\
%\hline
%$(2,0)$ & $0$ & $2$ & $3$ & $2$ & $3$ & $1$ \\
%\hline
%$(3,0)$ & $2$ & $3$ & $3$ & $3$ & $3$ & $2$ \\
%\hline
%$(4,0)$ & $1$ & $3$ & $2$ & $3$ & $2$ & $0$ \\
%\hline
%$(5,0)$ & $2$ & $2$ & $0$ & $2$ & $0$ & $0$ \\
%\hline
%\end{tabular}
%\end{center}

\noindent One checks that the quaternionic Fr\"{o}licher spectral sequence degenerates at the first page. 
%The non-HKT-ness degrees are
%%$$\Delta^{p} = \dim H^{p,0}_{BC} + \dim H^{p,0}_{AE} - 2 \dim E_2^{p,0}$$
%%are
%\begin{equation*}
%\left\{
%\begin{array}{rcl} 
%\Delta^1=\Delta^5&=&2 \\
%\Delta^{2}= \Delta^{4}&=& 1 \\
%\Delta^3&=&4. 
%\end{array}
%\right.
%\end{equation*}
%This shows that the first non-HKT-ness degree does not automatically vanish on higher-dimensional $SL(n,\mathbb{H})$-manifolds (see also~\cite{ang-tom-ver}). Moreover, we point out that
A straightforward calculation shows that
$$H^{2,0}_\partial = \frac{\langle \varphi^{12}, \varphi^{13}, \varphi^{14}, \varphi^{15}, \varphi^{16}, \varphi^{23}, \varphi^{24}, \varphi^{25}, \varphi^{26}+\varphi^{45}, \varphi^{34}, \varphi^{46} \rangle}{\langle \varphi^{12}, \varphi^{14} \rangle}$$
%A straightforward calculation shows that
and that 
\begin{eqnarray*}
H^{\bar{J},+}_\partial &=& \frac{\langle \varphi^{12}, \varphi^{13}, \varphi^{14}, \varphi^{23}, \varphi^{24}, \varphi^{25}+ \varphi^{46}, \varphi^{34}\rangle}{\langle \varphi^{12}, \varphi^{14} \rangle}, \\
H^{\bar{J},-}_\partial &=& \frac{\langle \varphi^{12}, \varphi^{14}, \varphi^{23}, \varphi^{26} + \varphi^{45}, \varphi^{25} - \varphi^{46}, \varphi^{34} \rangle}{\langle \varphi^{12},\varphi^{14} \rangle}.
\end{eqnarray*}
Hence the hypercomplex structure is not $C^\infty$-pure because the space
$$H^{\bar{J},-}_\partial \cap H^{\bar{J},+}_\partial = \frac{\langle \varphi^{12}, \varphi^{14}, \varphi^{23}, \varphi^{34} \rangle}{\langle \varphi^{12}, \varphi^{14}\rangle}$$
has dimension 2. Similarly, the hypercomplex structure is not $C^\infty$-full because the space
$$\left( H^{\bar{J},-}_\partial + H^{\bar{J},+}_\partial \right)^{\perp_h} = \frac{\langle \varphi^{12}, \varphi^{14}, \varphi^{15}, \varphi^{16} \rangle}{\langle \varphi^{12},\varphi^{14} \rangle}$$
has dimension 2.
%%%%%%%%%%%%%%%%%%%%%%%%%%%%%%%%%%%%%%%%%%%%%%%%%%%%%%%%%%%%%

\subsection*{Funding}
This work was supported by the F.R.S.-FNRS~[16594832 to P.W.].

\subsection*{Acknowledgments}
The first named author would like to thank Dan Popovici for giving his insights on Hermitian strongly Gauduchon metrics and also thanks Andrei Teleman, Gueo Grantcharov and Carl~Tipler for useful discussions.
The second named author warmly thanks Joel Fine and Misha Verbitsky for their advice and continuous support. Both authors would like to thank the referee for his helpful comments. %Moreover, he would like to express his gratitude to Daniele Angella, Adriano Tomassini and Thomas Bruun Madsen for their interest in the present work. 

{\small

}

\end{document}